\newcommandx{\todoN}[2][1=]{\todo[linecolor=red,backgroundcolor=white,bordercolor=red,#1]{#2}}			
	\tikzstyle{every picture}=[scale=.35,inner sep=0]
\newtheorem{thm}{Theorem}
\theoremstyle{definition}
\theoremstyle{theorem}
\newtheorem{theorem}{Theorem}[section]
\newtheorem{lemma}[theorem]{Lemma}
\newtheorem{proposition}[theorem]{Proposition}
\theoremstyle{definition}
\newtheorem{definition}[theorem]{Definition}
\theoremstyle{remark}
\newtheorem{remark}[theorem]{Remark}
\newtheorem{example}[theorem]{Example}
\numberwithin{equation}{section}
\begin{document}

\title[Coinvariants of metaplectic representations]{Coinvariants of metaplectic representations\\on moduli of abelian varieties}

\author[N.~Tarasca]{Nicola Tarasca}
\address{Nicola Tarasca 
\newline \indent Department of Mathematics \& Applied Mathematics
\newline \indent Virginia Commonwealth University, Richmond, VA 23284}
\email{tarascan@vcu.edu}

\subjclass[2020]{
14K10,  	
17B65,  	
17B66,  	
17B69,  	
14F06      
(primary), 
14K25,  	
14L15  	
(secondary)}
\keywords{Infinite-dimensional Lie algebras, moduli of abelian varieties, symplectic and metaplectic algebras, connections and Atiyah algebras, vertex operator algebras, 
sheaves of coinvariants, conformal field theory}

\begin{abstract}
We construct spaces of coinvariants at principally polarized abelian varieties with respect to the action of 
an infinite-dimensional Lie algebra.
We show how these spaces  globalize to twisted $\mathcal{D}$-modules  on moduli of principally polarized abelian varieties, and we determine  
the Atiyah algebra of a line bundle acting on them. We prove  analogous results  on the universal abelian variety.
An essential aspect of our arguments involves analyzing the Atiyah algebra of the Hodge and canonical line bundles on moduli  of abelian varieties and the universal abelian variety.
\end{abstract}

\vspace*{-1.5pc}

\maketitle

\vspace{-1.5pc}

\section*{Introduction}


Spaces of coinvariants have classically been constructed by assigning representations of affine Lie algebras, and more generally,  vertex operator algebras, to pointed algebraic  curves \cite{tuy, bzf}. These spaces  globalize to quasi-coherent sheaves 
carrying a twisted $\mathcal{D}$-module structure on moduli of  curves. Said structure is induced by an equivariant action of the \mbox{Virasoro} algebra on vertex operator algebra modules and on moduli  of curves with marked points and formal coordinates.
The construction generalizes to produce  twisted $\mathcal{D}$-modules on moduli spaces parametrizing pointed curves with extra features: line bundles, principal bundles, and $\mathcal{D}$-bundles \cite{frenkel2004twisted, bzf, ben2010mathcal}.

Removing curves out of the picture, 
Arbarello--De Concini \cite{adc91} 
constructed a moduli space $\widehat{\mathcal{A}}_g$ of suitable extensions of principally polarized abelian varieties which includes, via an extended Torelli map,
the moduli space $\widehat{\mathcal{M}}_g$ of algebraic curves with a marked point and a formal coordinate. They showed that an infinite-dimensional symplectic algebra $\mathfrak{sp}\left(H' \right)$ acts transitively on $\widehat{\mathcal{A}}_g$, extending the transitive action of the Witt algebra on $\widehat{\mathcal{M}}_g$.
Here, $H'$ is the vector space of Laurent series in a formal variable with zero constant term and carries a symplectic form  induced from the  infinite-dimensional Heisenberg algebra.
The oscillator representation which injects the Witt algebra into $\mathfrak{sp}\left(H' \right)$ thus appears as the differential of the extended Torelli map. 

The present project started from the realization that the results of \cite{adc91} offer an opening for an extension of the classical conformal field theory from  moduli spaces of curves to  moduli spaces of abelian varieties.
Removing curves out of the construction of coinvariants, we thus build and study twisted $\mathcal{D}$-modules on moduli spaces of abelian varieties.

Our construction proceeds as follows.
The symplectic algebra $\mathfrak{sp}\left(H' \right)$ admits a unique (up to isomorphism) non-trivial central extension, called the \textit{metaplectic algebra} $\mathfrak{mp}\left(H' \right)$ (this is reviewed in \S\ref{sec:symp&meta} and the unicity follows from Proposition \ref{prop:cohLiespH}).
For a representation $V$ of $\mathfrak{mp}\left(H' \right)$, we define the space of coinvariants of $V$ at a point $a\in \widehat{\mathcal{A}}_g$ as
\begin{equation}
\label{eq:coinvonAtilde}
\widehat{\mathbb{V}}(V)_{a} := V \, / \,\mathfrak{sp}_F\left(H' \right) \cdot V,
\end{equation}
where $\mathfrak{sp}_F\left(H' \right)$ is the stabilizer of $\mathfrak{sp}\left(H' \right)$ at $a$---see \eqref{eq:spF}---and the action of $\mathfrak{sp}_F\left(H' \right)$ on $V$ is given by a splitting $\mathfrak{sp}_F\left(H' \right)\subset \mathfrak{mp}\left(H' \right)$---see Proposition \ref{prop:splittings}.
The space \eqref{eq:coinvonAtilde} is the largest quotient of $V$ on which $\mathfrak{sp}_F\left(H' \right)$ acts trivially.
We show that the spaces \eqref{eq:coinvonAtilde} globalize to a sheaf $\widehat{\mathbb{V}}(V)$ on $\widehat{\mathcal{A}}_g$ carrying a twisted $\mathcal{D}$-module structure (Theorem \ref{thm:AtiyahactingonVhatA}). Furthermore, in case $V$ is an admissible $\mathfrak{mp}\left(H' \right)$ representation, as per  Definition \ref{def:Heisenberg--admissible}, we show that the sheaf $\widehat{\mathbb{V}}(V)$ descends on the moduli space $\mathcal{A}_g$ of principally polarized, $g$-dimensional abelian varieties:

\begin{thm}
\label{thm:maininitVA}
For an admissible $\mathfrak{mp}\left(H' \right)$ representation $V$, the spaces of coinvariants \eqref{eq:coinvonAtilde} give rise to a twisted $\mathcal{D}$-module $\mathbb{V}(V)$ on $\mathcal{A}_g$.
\end{thm}

For instance, the theorem applies to Heisenberg vertex algebras of arbitrary rank and even lattice vertex algebras (Lemma \ref{lemma:exadm}).
The metaplectic algebra $\mathfrak{mp}\left(H' \right)$ acts on $\widehat{\mathcal{A}}_g$ via the projection $\mathfrak{mp}\left(H' \right)\rightarrow \mathfrak{sp}\left(H' \right)$ 
and replaces the Virasoro algebra of the classical conformal field theory. The action of $\mathfrak{mp}\left(H' \right)$ on $V$ induces the twisted $\mathcal{D}$-module structure on $\mathbb{V}(V)$. To explicitly determine it, we use the formalism of Atiyah algebras from Beilinson-Schechtman \cite{besh}.  Let $\Lambda$ be the Hodge line bundle on $\mathcal{A}_g$, and $\mathscr{F}_\Lambda$  the Atiyah algebra of $\Lambda$, i.e., the sheaf of first-order differential operators acting on $\Lambda$. We say that an admissible $\mathfrak{mp}\left(H' \right)$ representation $V$ is of central charge $c$ if the action of $\mathfrak{mp}\left(H' \right)$ induces an action of the Virasoro algebra on $V$ of central charge~$c$---see Definition \ref{def:centralcharge}.

\begin{thm}
\label{thm:mainVA}
For an admissible $\mathfrak{mp}\left(H' \right)$ representation  $V$ of central charge $c$, the sheaf $\mathbb{V}(V)$ on $\mathcal{A}_g$ carries an action of  the Atiyah algebra $\frac{c}{2}\,\mathscr{F}_\Lambda$. This action induces the twisted $\mathcal{D}$-module structure on $\mathbb{V}(V)$.
\end{thm}

Moreover, we extend these results over the universal abelian variety.
It is shown in \cite{adc91} that $\widehat{\mathcal{A}}_g$ admits a universal family $\widehat{\mathcal{X}}_g\rightarrow \widehat{\mathcal{A}}_g$. This extends the universal family $\mathcal{X}_g$ over the stack $\mathcal{A}_g$. The space $\widehat{\mathcal{X}}_g$ carries a transitive action of the Lie algebra $\mathfrak{sp}\left(H' \right)\ltimes H'$.
The terms of degree at most two in the Weyl algebra $\widetilde{\mathscr{U}}(H)$ give a non-trivial central extension $\widetilde{\mathscr{U}}_{2}(H)$ of $\mathfrak{sp}\left(H' \right)\ltimes H'$ (\S\ref{sec:Weyl}).
For a representation  $V$ of $\widetilde{\mathscr{U}}_{2}(H)$,
we define the space of coinvariants of $V$ at $x \in \widehat{\mathcal{X}}_g$ as
\begin{equation}
\label{eq:coinvonXtilde}
\widehat{\mathbb{V}}(V)_{x} := V \, / \, \left(\mathfrak{sp}_F\left(H' \right)\ltimes F\right) \cdot V,
\end{equation}
where $\mathfrak{sp}_F\left(H' \right)\ltimes F$ is the stabilizer of $\mathfrak{sp}\left(H' \right)\ltimes H'$ at $x$.
We show that the spaces \eqref{eq:coinvonXtilde} globalize to a twisted $\mathcal{D}$-module on $\widehat{\mathcal{X}}_g$ (Theorem \ref{thm:AtiyahactingonVhatX}).
In case $V$ is an admissible $\widetilde{\mathscr{U}}_{2}(H)$ representation, as per  Definition \ref{def:Heisenberg--admissible}, we show:

\begin{thm}
\label{thm:maininitVX}
For an admissible $\widetilde{\mathscr{U}}_{2}(H)$ representation $V$, the spaces of coinvariants \eqref{eq:coinvonXtilde} give rise to a twisted $\mathcal{D}$-module $\mathbb{V}(V)$ on~$\mathcal{X}_g$.
\end{thm}

The twisted $\mathcal{D}$-module structure is explicitly determined by the action of an appropriate multiple of the Atiyah algebra $\mathscr{F}_\Xi$ of the canonical line bundle $\Xi$ on~$\mathcal{X}_g$ (see \S\ref{sex:lambdatheta}):

\begin{thm}
\label{thm:mainVX}
For an admissible $\widetilde{\mathscr{U}}_{2}(H)$ representation $V$ of central charge $c$, the sheaf $\mathbb{V}(V)$ on $\mathcal{X}_g$ carries an action of  the Atiyah algebra $-\frac{c}{2}\,\mathscr{F}_\Xi$. This action induces the twisted $\mathcal{D}$-module structure on $\mathbb{V}(V)$.
\end{thm}

To prove the above statements, we need to show some auxiliary results, which are of independent interest.
It is shown in \cite{adkp} that there exist canonical homomorphisms 
\begin{align*}
\overline{\nu} \colon H^2\left(\mathrm{Witt}, \mathbb{C} \right)&\rightarrow H^2\left( \mathcal{M}_g, \mathbb{C}\right), \\
\overline{\mu} \colon  H^2\left( \mathrm{Witt}\ltimes H', \mathbb{C} \right)  &\rightarrow H^2\left( \mathcal{P}'_{g-1}, \mathbb{C}\right),
\end{align*}
with $\nu$ being an isomorphism for $g\geq 3$ \cite[(4.14)]{adkp}, and $\mu$ an isomorphism for $g\geq 5$ \cite[(4.15)]{adkp}.
Here $\mathcal{M}_g$ is the moduli space of smooth genus $g$ curves, and $\mathcal{P}'_{g-1}$ is the moduli space of quadruples $(C,P,v, \mathscr{L})$, where
$C$ is a smooth genus $g$ curve, $P$ is a point of $C$, $v$ is a nonzero tangent vector to $C$ at $P$, and $\mathscr{L}$ is a degree $g-1$ line bundle on $C$. 
We show  that these maps extend to give:

\begin{thm}
\label{thm:canisointro}
There exist  canonical homomorphisms 
\begin{align*}
\nu \colon H^2\left(\mathfrak{sp} \left( H' \right), \mathbb{C}\right) &\rightarrow H^2\left( {\mathcal{A}}_g, \mathbb{C}\right), \\ 
\mu \colon H^2\left(\mathfrak{sp} \left( H' \right)\ltimes H', \mathbb{C} \right) &\rightarrow H^2\left( {\mathcal{X}}_g, \mathbb{C}\right),
\end{align*}
with  $\nu$ being an isomorphism for $g\geq 3$, and $\mu$ an isomorphism for $g\geq 5$.
\end{thm}

As an immediate consequence, we obtain:

\begin{thm}
\label{thm:mpaction}
\begin{enumerate}[(i)]
\item For $g \geq 3$, the line bundle $\Lambda$ on $\widehat{\mathcal{A}}_g$ carries an action of $\mathfrak{mp} \left( H' \right)$ by first-order differential operators  extending the transitive action of \sloppy\mbox{$\mathfrak{sp} \left( H' \right)$} on  $\widehat{\mathcal{A}}_g$ and with the central element $\bm{1}\in \mathfrak{mp} \left( H' \right)$ acting as multiplication by $2$ on the fibers of $\Lambda\rightarrow \widehat{\mathcal{A}}_g$.

\item For $g \geq 5$, the  line bundle $\Xi$ on $\widehat{\mathcal{X}}_g$ carries an action of $\widetilde{\mathscr{U}}_{2}(H)$ by first-order differential operators  extending the transitive action of \sloppy\mbox{$\mathfrak{sp} \left( H' \right)\ltimes H'$} on  $\widehat{\mathcal{X}}_g$ and with the central element $\bm{1}\in \widetilde{\mathscr{U}}_{2}(H)$ acting as multiplication by $-2$ on the fibers of $\Xi\rightarrow \widehat{\mathcal{X}}_g$.
\end{enumerate}
\end{thm}

Moreover, we study group (ind)-schemes of symplectic automorphisms in \S\ref{sec:groupind}, generalizing the group (ind)-schemes of ring automorphisms from \cite{beilinson1991quantization, bzf}. Our constructions of the sheaves of coinvariants on $\mathcal{A}_g$ and $\mathcal{X}_g$ are thus variations on the formalism of localization of modules over Harish-Chandra pairs (\S\ref{sec:geoHCp}).

The present results are intended to initiate the study of coinvariants on abelian varieties and their moduli. We mention some natural directions in \S\ref{sec:final}  and will return to these  in the near future.

\smallskip

The paper is structured as follows. We review the algebraic and geometric background in \S\S\ref{sec:HmpVir} and \ref{sec:Ahat}, respectively.
In \S\ref{sec:groupind} we define and study a group scheme which acts transitively on the fibers of the natural map $\widehat{\mathcal{A}}_g\rightarrow {\mathcal{A}}_g$, together with related group (ind-)schemes. In \S\ref{sec:cohLie} we start the study of the cohomology of the Lie algebras $\mathfrak{sp} \left( H' \right)$ and $\mathfrak{sp} \left( H' \right)\ltimes H'$. 
In \S\ref{sec:isoH2} we prove Theorem \ref{thm:canisoH2spAX}, which establishes two canonical isomorphisms of $H^2$ spaces, and show how this implies Theorems \ref{thm:canisointro}  and \ref{thm:mpaction}.
Next, an intermezzo in \S\ref{sec:intermezzo} contains some required properties of metaplectic representations.
We construct sheaves of coinvariants on $\widehat{\mathcal{A}}_g$ and $\widehat{\mathcal{X}}_g$ in \S\ref{sec:Vhat} and descend them on $\mathcal{A}_g$ and $\mathcal{X}_g$ in \S\ref{sec:sheafV}, where we prove Theorems \ref{thm:maininitVA}--\ref{thm:mainVX}. We conclude with some final remarks in \S\ref{sec:final}.
Throughout, we work over~$\mathbb{C}$.


\section{Heisenberg, metaplectic, and Virasoro algebras}
\label{sec:HmpVir}
We review here the infinite-dimensional Lie algebras from \cite{arbarello1987infinite, adc91} which will play a role in the paper. 
The complex Lie algebras from these references are here promoted to functors on commutative $\mathbb{C}$-algebras, as needed for our geometric constructions. 
Moreover, emphasis is given to the two-cocycles defining the relevant central extensions, later needed in \S\S\ref{sec:cohLie}--\ref{sec:isoH2}.

Here is some notation used throughout. For a topological vector space $V$, a subset $B\subset V$ is said to \textit{topologically generate} $V$ if finite linear combinations of elements of $B$ form a dense open subspace of $V$.
Also, let $\mathfrak{gl}(V)$ be the Lie algebra of continuous linear endomorphisms of $V$.

\subsection{The algebra of formal Laurent series}
Let $H$ be the functor which assigns to a $\mathbb{C}$-algebra $R$ the $R$-algebra $H(R):= R(\!(t)\!)$  of formal Laurent series in a parameter $t$. One has a decomposition 
\[
H=H_+ \oplus H_-, \quad \mbox{where}\quad H_+(R):= R\llbracket t \rrbracket \quad\mbox{and}\quad H_-(R):= t^{-1}R[t^{-1}].
\]
Also, let $H'$ be the functor which assigns to $R$ the $R$-module $H'(R) \subset H(R)$ of Laurent series with zero constant term:
\[
H':=H'_+\oplus H_- \qquad\mbox{where }    H'_+ := t \, H_+.
\]

The $R$-module $H(R)$ is endowed with the $t$-adic topology. This is the topology that has the collection of cosets 
\[
\left\{f + t^N H_+ \,\, | \,\, f\in H \,\,\mbox{and}\,\, N\geq 1 \right\}
\]
as a basis of open subsets. This gives $H$ the structure of a complete topological space, topologically generated over $R$ by $b_i:= t^i$ for $i\in \mathbb{Z}$. 

\smallskip

We will repeatedly use the following result from \cite{arbarello1987infinite}: the Lie algebra $\mathfrak{gl}(H)$ has a canonical two-cocycle  given by
\begin{equation}
\label{eq:psi}
\psi \left( A,B \right) := \mathrm{Tr}\left( \pi_+ \, A \, \pi_- \, B \, \pi_+ - \pi_+ \, B \, \pi_- \, A \, \pi_+ \right) \qquad \mbox{for $A,B\in \mathfrak{gl}(H)$,}
\end{equation}
 where $\pi_+\colon H\rightarrow H_+$ and $\pi_-\colon H\rightarrow H_-$ are the natural projections.

\subsection{The Heisenberg  algebra}
Consider the symplectic form $\langle \, , \, \rangle$ on $H$ given by
\begin{equation}
\label{eq:symplectic}
\langle f,g\rangle := - \mathop{\mathrm{Res}}_{t=0} \, f \, dg \qquad \mbox{for $f,g\in H$}.
\end{equation}
This only depends on $df$ and $dg$, for $f,g\in H$, and restricts to a nondegenerate symplectic form on $H'$.
The \textit{Heisenberg  algebra} is the Lie algebra structure on $H$ given by the Lie bracket $\langle \, , \, \rangle$.
As \eqref{eq:symplectic} is continuous with respect to the $t$-adic topology, the Heisenberg  algebra $H$ is a complete topological Lie algebra.
Moreover, \eqref{eq:symplectic} is invariant by changes of the parameter~$t$. 
Note that $\langle f,g\rangle = \psi(f,g)$, where $\psi$ is from \eqref{eq:psi}, and $f,g\in H$ act on $H$ by multiplication. Thus, the Heisenberg  algebra $H$ is the central extension 
\[
0 \rightarrow \mathfrak{gl}_1\, \bm{1} \rightarrow H \rightarrow  H'  \rightarrow 0
\]
given be the restriction of the two-cocycle $\psi$, where $\bm{1}:=b_0$, and $\mathfrak{gl}_1$ is the functor assigning to a $\mathbb{C}$-algebra $R$ the trivial Lie algebra $R$.

\subsection{The enveloping algebra $\mathscr{U}(H)$}
Let $\mathscr{U}(H)$ be the quotient of the universal enveloping algebra of the Heisenberg  algebra $H$ by the two-sided ideal generated by $\bm{1}-1$, where $\bm{1}$ is the central element of $H$, and $1$ is the unit in the universal enveloping algebra.
The algebra $\mathscr{U}(H)$ has a canonical filtration $\mathscr{U}_0(H) \subset \mathscr{U}_1(H)\subset \cdots$, where $\mathscr{U}_0(H) = \mathfrak{gl}_1\, \bm{1}$, and $\mathscr{U}_N(H)$ for $N\in\mathbb{N}$ assigns to a $\mathbb{C}$-algebra $R$ the $R$-module generated by the products $f_1\cdots f_m$ with $m\leq N$ and $f_i\in H$ for $i\leq N$.

Remarkably, $\mathscr{U}_{2}(H)$ is a Lie subalgebra of
$\mathscr{U}(H)$ for the Lie bracket given by the commutator.
Explicitly, the Lie bracket in $\mathscr{U}_{2}(H)$ is given by
\begin{align}
\label{eq:U2}
\begin{split}
[f,g] & = \langle f,g\rangle, \\
{[f, \bm{1}]} &= {[fg, \bm{1}]} = 0, \\
{[fg, k]} & = \langle f,k\rangle \, g + \langle g,k\rangle \, f,\\
{[fg,kl]} &= \langle g,k \rangle \, fl + \langle f,k \rangle \, gl + \langle g,l \rangle \, kf + \langle f,l\rangle \, kg
\end{split}
\end{align}
for $f,g,k,l\in H'$.
One has 
\[
\mathscr{U}_{2}(H) / \mathfrak{gl}_1 \,\bm{1} \cong S^2(H') \ltimes H'
\]
as Lie algebras, where $S^2(H')$ is the second symmetric power of $H'$, and
the semidirect product is given by the action of $S^2(H')$ on $H'$ induced by
\begin{equation}
\label{eq:S2H'onH'}
fg \colon H'\rightarrow H', \qquad k\mapsto \langle f,k\rangle \, g + \langle g,k\rangle \, f, \qquad \mbox{for $f,g,k\in H'$.}
\end{equation}
Hence, $\mathscr{U}_{2}(H)$ is a central extension 
\[
0 \rightarrow \mathfrak{gl}_1\, \bm{1} \rightarrow \mathscr{U}_{2}(H) \rightarrow S^2(H') \ltimes H'  \rightarrow 0.
\]
To identify the corresponding two-cocycle, arguing by cases, one shows that 
\begin{eqnarray*}
[:fg:,:kl:] &= & -\frac{1}{2}\,\psi \left(fg,kl \right) \, \bm{1} \\
&& +\langle g,k \rangle \, : fl: + \langle f,k \rangle \, :gl: + \langle g,l \rangle \, :kf: + \langle f,l\rangle \, :kg:
\end{eqnarray*}
in $\mathscr{U}_{2}(H)$ is compatible with \eqref{eq:U2}, where 
$:\,\,:$ denotes the normal order product  giving a specific lift of elements from $S^2(H')$ to $\mathscr{U}_{2}(H)$, and 
$\psi$ is the restriction of the two-cocycle in \eqref{eq:psi}.
Hence, the  two-cocycle $c$ defining $\mathscr{U}_{2}(H) $ is given by 
\begin{align*}
c(fg,kl)&= -\frac{1}{2}\,\psi \left(fg,kl \right), &
c(f,g)&=\langle f,g \rangle, & 
c(fg,k)&=0 
\end{align*}
for $f,g,k,l\in H'$.

We will need a completion of $\mathscr{U}_{2}(H)$. For this, we first introduce the completion of $S^2(H')$ given by the symplectic  algebra and its corresponding extension.

\subsection{Symplectic and metaplectic  algebras}
\label{sec:symp&meta}
The \textit{symplectic  algebra} $\mathfrak{sp}\left(H' \right)$ is the Lie subalgebra of $\mathfrak{gl}(H')$ defined as
\[
\mathfrak{sp}\left(H' \right) := \left\{ X\in \mathfrak{gl}(H') \, | \, \langle X a, b \rangle + \langle a,X b \rangle =0, \mbox{ for all } a,b\in H' \right\}.
\]
The action of $S^2(H')$ on $H'$ induced by \eqref{eq:S2H'onH'} realizes $S^2(H')$ as a dense Lie subalgebra of $\mathfrak{sp}\left(H' \right)$.

The \textit{metaplectic  algebra} $\mathfrak{mp} \left(H' \right)$ 
is the central extension 
\begin{equation}
\label{eq:mp}
0 \rightarrow \mathfrak{gl}_1 \,\bm{1} \rightarrow \mathfrak{mp}\left(H' \right) \rightarrow \mathfrak{sp}\left(H' \right)  \rightarrow 0
\end{equation}
defined by the two-cocycle
\begin{equation}
\label{eq:cocyclemeta}
 -\frac{1}{2}\, \psi\left( X, Y\right)
\quad \mbox{for $X,Y\in \mathfrak{sp}\left(H' \right)$}
\end{equation}
where $\psi$ is the restriction of \eqref{eq:psi}.

\subsection{The Weyl algebra}
\label{sec:Weyl}
The \textit{Weyl algebra} $\widetilde{\mathscr{U}}(H)$ is the completion of 
the algebra $\mathscr{U}(H)$ with respect to the topology in which the basis of open neighborhoods of $0$ is formed by the left ideals of the submodules  
\[
t^N H_+\subset H  \subset  \widetilde{\mathscr{U}}(H) \quad \mbox{for $N\in\mathbb{Z}$} 
\]
\cite[\S 2.1.2]{bzf}. Elements of $\widetilde{\mathscr{U}}(H)$ can be described as possibly infinite series of the form $A_0 + \sum_{i\geq 1} A_i \,b_i$ with $A_i$ equal to a finite linear combination of finite formal products of elements in $\{b_j\}_{j\in \mathbb{Z}}$ for $i\geq 0$. 

The closure $\widetilde{\mathscr{U}}_{2}(H)$ of $\mathscr{U}_{2}(H)$ in $\widetilde{\mathscr{U}}(H)$ is a complete topological Lie subalgebra
of $\widetilde{\mathscr{U}}(H)$ for the Lie bracket given by the commutator.
Explicitly, consider first the semidirect product $\mathfrak{sp}\left(H' \right) \ltimes H'$ given by the action 
\[
[X,f]=X(f) \qquad \mbox{for $X\in \mathfrak{sp}\left(H' \right)$ and $f\in H'$.}
\]
Hence, the Lie bracket for $\mathfrak{sp}\left(H' \right) \ltimes H'$ is 
\[
[X+f, Y+g] = XY-YX + X(g) - Y(f) 
\]
for $X,Y\in \mathfrak{sp}\left(H' \right)$ and $f,g\in H'$. Then, one has a central extension
\begin{equation}
\label{eq:Uleq2}
0 \rightarrow \mathfrak{gl}_1 \,\bm{1} \rightarrow \widetilde{\mathscr{U}}_{2}(H) \rightarrow \mathfrak{sp}\left(H' \right) \ltimes H'  \rightarrow 0
\end{equation}
with   two-cocycle $c$ given by 
\begin{align}
\label{eq:2cocycledefUtildeleq2}
c(X,Y)&= -\frac{1}{2}\, \psi \left( X,Y \right), &
c(f,g)&=\langle f,g \rangle, & 
c(X,f)&=0 
\end{align}
for $X,Y\in \mathfrak{sp}\left(H' \right)$ and $f,g\in H'$,
where $\psi$ is the restriction of \eqref{eq:psi}.
This is compatible with the Lie bracket for $\mathscr{U}_{2}(H)$ in \eqref{eq:U2}.

The metaplectic  algebra $\mathfrak{mp}\left( H' \right)$ is realized as a Lie subalgebra of $\widetilde{\mathscr{U}}_{2}(H)$ via the injection
\begin{equation}
\label{eq:mpintoUtilde}
\mathfrak{mp}\left( H' \right) \hookrightarrow \widetilde{\mathscr{U}}_{2}(H), \qquad \bm{1}\mapsto  \bm{1}, \quad X\mapsto X \quad \mbox{for $X\in \mathfrak{sp}\left(H' \right)$}.
\end{equation}

\subsection{Witt and Virasoro  algebras}
\label{sec:WittVir}
The \textit{Witt  algebra} $\mathrm{Witt}$ is the functor assigning to a $\mathbb{C}$-algebra $R$  the Lie algebra $R(\!(t)\!)\partial_t$ of continuous derivations  of  $R(\!(t)\!)$. It is topologically generated over $R$ by $L^p:= -t^{p+1}\partial_t$ for $p\in\mathbb{Z}$, with relations $[L_p, L_q] = (p-q)L_{p+q}$ for $p,q\in\mathbb{Z}$. 
The oscillator representation of $\mathrm{Witt}$ gives an injection of Lie algebras
\begin{equation}
\label{eq:tau}
\tau\colon \mathrm{Witt} \hookrightarrow \mathfrak{sp}\left(H' \right), \qquad 
L_p \mapsto \frac{1}{2}\, \sum_{i} b_{-i} \,b_{i+p}
\end{equation}
where the sum is over $i\in \mathbb{Z}\setminus \{0,-p\}$.

The \textit{Virasoro  algebra} $\mathrm{Vir}$ is the central extension
\[
0 \rightarrow \mathfrak{gl}_1 \,\bm{1} \rightarrow \mathrm{Vir} \rightarrow  \mathrm{Witt} \rightarrow 0
\]
with   Lie bracket  given by 
\[
[L_p, L_q] = (p-q)\, L_{p+q} + \frac{1}{12}\, (p^3-p)\, \delta_{p+q,0}\, \bm{1},
\] 
where $\delta_{p+q,0} =1$ when $p+q=0$, and $\delta_{p+q,0} =0$ otherwise.
The two-cocycle defining this extension is the restriction over $\mathrm{Witt} $ of the two-cocycle \eqref{eq:cocyclemeta}.
The map $\tau$ is lifted by the  injection
\begin{equation}
\label{eq:tauhat}
\widehat{\tau}\colon \mathrm{Vir} \hookrightarrow \mathfrak{mp} \left(H' \right), \qquad L_p \mapsto \frac{1}{2}\, \sum_{i} :b_{-i} \,b_{i+p}:, \quad \bm{1} \mapsto \bm{1}
\end{equation}
where the sum is over $i\in \mathbb{Z}\setminus \{0,-p\}$.

\subsection{The map $\sigma$ and the Lie algebra $\mathfrak{D}$}
We will need the Lie algebra injection
\begin{equation}
\label{eq:sigma}
\sigma \colon \mathrm{Witt} \ltimes H' \hookrightarrow \mathfrak{sp}\left(H' \right) \ltimes H', \quad L_p \mapsto \frac{1}{2}\, \sum_{i} b_{-i} \,b_{i+p} \,-\, \frac{p+1}{2}\, b_p, \quad b_q \mapsto b_q
\end{equation}
where the sum is over $i\in \mathbb{Z}\setminus \{0,-p\}$. For $p=0$, one simply has  \sloppy\mbox{$L_0\mapsto\frac{1}{2}\, \sum_{i} b_{-i} \,b_{i}$,} since $b_0$ vanishes in $H'$.

\begin{remark}
There are infinitely many ways of injecting $\mathrm{Witt} \ltimes H'$ in $\mathfrak{sp}\left(H' \right) \ltimes H'$: by replacing in \eqref{eq:sigma} the coefficient $-\frac{p+1}{2}$ with $a(p+1)$ for $a\in\mathbb{C}$, one obtains a one-parameter family of such injections, as in the Segal-Sugawara construction \cite[5.2.8]{bzf}.
 The case $a=-\frac{1}{2}$ is specifically needed in \S\ref{sec:curves_lines}.
\end{remark}

Finally, let $\mathfrak{D}$ be the central extension
\[
0 \rightarrow \mathfrak{gl}_1 \,\bm{1} \rightarrow \mathfrak{D} \rightarrow \mathrm{Witt} \ltimes H' \rightarrow 0
\]
defined by the restriction to $\mathrm{Witt} \ltimes H'$ of the two-cocycle $\psi$ from \eqref{eq:psi} such that $\mathrm{Witt}$ acts on $H$ as in \eqref{eq:tau} and $H'$ acts on $H$ by multiplication. The map $\sigma$ is lifted by the  injection
\begin{equation}
\label{eq:sigmahat}
\widehat{\sigma}\colon \mathfrak{D} \hookrightarrow \widetilde{\mathscr{U}}_{2}(H), 
\quad L_p \mapsto \frac{1}{2}\, \sum_{i} :b_{-i} \,b_{i+p}: \,-\, \frac{p+1}{2}\, b_p, \quad b_q \mapsto b_q, \quad \bm{1}\mapsto\bm{1}
\end{equation}
where the sum is over $i\in \mathbb{Z}\setminus \{0,-p\}$.

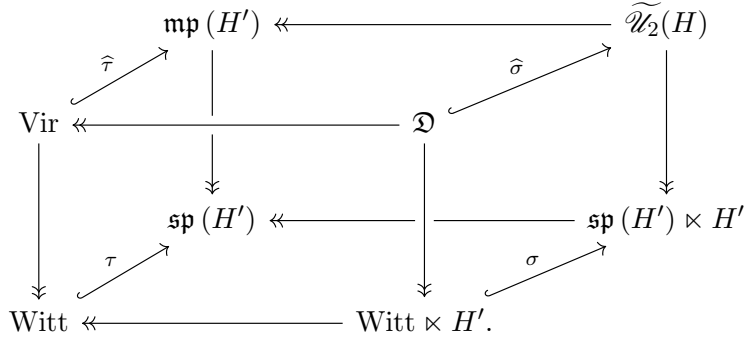
\begin{figure}[t]
\begin{tikzcd}
&\mathfrak{mp}\left(H' \right) \arrow[->>]{dd}  && \widetilde{\mathscr{U}}_{2}(H) \arrow[->>]{dd} \arrow[->>]{ll} \\
\mathrm{Vir} \arrow[->>]{dd} \arrow[hookrightarrow]{ru}{\widehat{\tau}} && \mathfrak{D}  \arrow[hookrightarrow]{ru}{\widehat{\sigma}} \arrow[->>, crossing over]{ll}  \\
&\mathfrak{sp}\left(H' \right) && \mathfrak{sp}\left(H' \right) \ltimes H' \arrow[->>]{ll} \\
\mathrm{Witt} \arrow[hookrightarrow]{ru}{\tau} && \mathrm{Witt} \ltimes H'. \arrow[->>]{ll}  \arrow[hookrightarrow]{ru}{\sigma}  \arrow[<<-, crossing over]{uu}  
\end{tikzcd}
\caption{The main Lie algebra maps.}
\label{fig:bigLiesquare}
\end{figure}

The main Lie algebra maps of this section are summarized by the commutative diagram in Figure \ref{fig:bigLiesquare}. There, the four horizontal surjections  admit Lie algebra splittings, and the left and right squares are Cartesian.


\section{Extended abelian varieties and their moduli}
\label{sec:Ahat}
First, we review  some geometric constructions and results from \cite{adc91} related to suitable extensions of abelian varieties and their moduli. 
Theorems \ref{thm:unif1} and \ref{thm:unif2}  play a key role in later sections. Then, we conclude with a review of the degree-$2$ cohomology classes on the moduli spaces in consideration.

\subsection{Extended abelian varieties}
\label{sec:ZFL}
Recall the symplectic form $\langle \, , \, \rangle$ on $H'$ from \eqref{eq:symplectic}.
An \textit{extended principally polarized abelian variety} (extended PPAV, for short) of dimension $g$ is a triple $(Z, F, L)$, where 
\begin{align*}
&\mbox{$Z$ is a Lagrangian subspace of $H'$,} \\
&\mbox{$F$ is a codimension  $g$ subspace of $Z$, and} \\
&\mbox{$L$ is a rank $2g$ lattice in $F^\perp/F$,}
\end{align*}
satisfying four suitable conditions. The form $\langle \, , \, \rangle$ on $H'$ induces a nondegenerate symplectic form on $F^\perp/F$, which will still be denoted as $\langle \, , \, \rangle$.
The first three conditions imposed on the triple $(Z, F, L)$ are:
\begin{align}
\label{eq:cond1}
& Z \cap H'_+ = 0,\\
\label{eq:cond2}
& L_\mathbb{R} \cap  A =0 \quad\mbox{where}\quad L_\mathbb{R} := L\otimes_{\mathbb{Z}} \mathbb{R} \quad\mbox{and}\quad A:= F^\perp \cap H'_+,\\
\label{eq:cond3}
& \frac{1}{2\pi i}\, \langle \, , \, \rangle \,\, \mbox{is unimodular on $L$.}
\end{align}
Condition \eqref{eq:cond1} implies the decompositions 
\[
H'= Z\oplus H'_+ \quad\mbox{and}\quad F^\perp/F = Z/F \oplus A 
\]
into maximal isotropic subspaces. It follows that $F^\perp/F$ has dimension $2g$. Condition \eqref{eq:cond2} implies that: the projection $F^\perp/F \rightarrow Z/F$ induces a real isomorphism $L_\mathbb{R} \cong Z/F$; the real structure on $F^\perp/F$ induced by $L_\mathbb{R}$ identifies $A$ with the conjugate of $Z/F$; and the form on $Z/F$ given by
\[
B(u,v):= \frac{1}{\pi}\, \langle \overline{u} , v \rangle \quad \mbox{for $u,v\in Z/F$}
\]
is Hermitian. The fourth and last condition on the triple $(Z, F, L)$ is
\begin{equation}
\label{eq:cond4}
\mbox{the form $B$ on $Z/F$ is positive definite.}
\end{equation}

A triple $(Z, F, L)$ satisfying the four conditions \eqref{eq:cond1}-\eqref{eq:cond4} determines an abelian variety $X$ of dimension $g$, which is the quotient of the $g$-dimensional space $Z/F$ by the image of $L$ under the projection $F^\perp/F \rightarrow Z/F$, equipped with the polarization $B$, which is principal  by  \eqref{eq:cond3}.
More precisely, the triple $(Z, F, L)$  is equivalent to an isomorphism class of extensions
\begin{equation}
\label{eq:H_+extX}
0\rightarrow H'_+ \rightarrow H'/K \rightarrow X \rightarrow 0
\end{equation}
where $K$ is the preimage of $L$ under the projection $F^\perp \rightarrow F^\perp/F$.

\subsection{Moduli of extended PPAVs}
In \cite{adc91}, the set $\widehat{\mathcal{A}}_g$ of extended PPAVs of dimension $g$ is endowed with the structure of an infinite-dimensional analytic manifold.
We sketch here the construction. Consider the infinite-dimensional analytic manifold
\[
\widehat{\mathcal{H}}_g := \widetilde{S}^2(H'_+) \times \mathcal{B}_g(H'_+) \times \mathcal{H}_g
\]
where
\begin{align*}
\widetilde{S}^2\left(H'_+\right) &:= \left\{ \varphi \colon H_- \rightarrow H'_+ \, \Bigg| \,  
\begin{array}{l}
\mbox{$\varphi$ is linear and symmetric,} \\[5pt]
\mbox{i.e., $\langle a, \varphi(b)\rangle = \langle \varphi(a), b\rangle$  for all  $a,b\in H_-$} 
\end{array}
\right\},\\[5pt]
\mathcal{B}_g(H'_+) &:= \mbox{the manifold of frames of $(H'_+)^g$,}\\[5pt]
\mathcal{H}_g &:= \mbox{the Siegel upper half-space of degree $g$.}
\end{align*}
The symplectic group $\mathrm{Sp}(2g, \mathbb{Z})$ acts freely and properly discontinuously on $\widehat{\mathcal{H}}_g$, 
hence the quotient $\widehat{\mathcal{H}}_g \, / \, \mathrm{Sp}(2g, \mathbb{Z})$ has a natural structure of an infinite-dimensional analytic manifold.
Moreover, one has a bijection of sets
\begin{equation}
\label{eq:fromHtoA}
\widehat{\mathcal{H}}_g \, / \, \mathrm{Sp}(2g, \mathbb{Z}) \rightarrow \widehat{\mathcal{A}}_g.
\end{equation}
Hence, \eqref{eq:fromHtoA} induces on $\widehat{\mathcal{A}}_g$ the structure of an infinite-dimensional analytic manifold \cite[\S 3]{adc91}.
One has a commutative diagram
\[
\begin{tikzcd}[column sep=6em]
\widehat{\mathcal{H}}_g \arrow{r}{/\, \mathrm{Sp}(2g, \mathbb{Z})} \arrow{d} & \widehat{\mathcal{A}}_g \arrow{d}\\
\mathcal{H}_g \arrow{r}{/\, \mathrm{Sp}(2g, \mathbb{Z})} & \mathcal{A}_g
\end{tikzcd}
\]
where $\mathcal{A}_g$ is the moduli space of principally polarized abelian varieties of dimension $g$. The action of $\mathrm{Sp}(2g, \mathbb{Z})$ on $\mathcal{H}_g$ has finite nontrivial stabilizers, hence at least an orbifold structure is required on $\mathcal{A}_g$. Here we consider $\mathcal{A}_g$ as the resulting quotient stack. This is a separated, smooth Deligne-Mumford stack. From \cite{adc91}, $\widehat{\mathcal{A}}_g$ is rational homotopy equivalent to ${\mathcal{A}}_g$.

\begin{remark}
To treat $\widehat{\mathcal{A}}_g$ and ${\mathcal{A}}_g$ on an equal footing, we will consider families $(Z,F,L)\rightarrow S$ of extended PPAVs of dimension $g$ over a smooth scheme~$S$. We will construct sheaves of coinvariants on such families in \S\ref{sec:Vhat} and descend them to families of PPAVs in \S\ref{sec:sheafV}, thus yielding sheaves on ${\mathcal{A}}_g$.
\end{remark}

\subsection{Symplectic uniformization of moduli of extended PPAVs}
\label{sec:spunif}
An action of a Lie algebra $\mathfrak{g}$ on a variety $X$ over $\mathbb{C}$ is a homomorphism of Lie algebras $\mathfrak{g}\rightarrow \mathrm{Vect}(X)$, where $\mathrm{Vect}(X)$ is the Lie algebra of regular vector fields on $X$. The action is said to be \textit{transitive} if for every $x\in X$, the evaluation map $\mathfrak{g}\rightarrow T_x(X)$ is surjective.

\begin{theorem}[Uniformization of $\widehat{\mathcal{A}}_g$ \cite{adc91}]
\label{thm:unif1}
The moduli space $\widehat{\mathcal{A}}_g$ carries a transitive action of $\mathfrak{sp} \left( H' \right)$. For $(Z, F, L)$ in $\widehat{\mathcal{A}}_g$, one has
\[
T_{(Z, F, L)} \left( \widehat{\mathcal{A}}_g \right) \cong \mathfrak{sp}_{F} \left( H' \right)  \setminus  \mathfrak{sp}  \left( H' \right) 
\]
where
\begin{equation}
\label{eq:spF}
\mathfrak{sp}_{F} \left( H' \right):= \left\{ X\in \mathfrak{sp}  \left( H' \right) \,\, | \,\, X\left( F^\perp \right) \subseteq F  \right\}.
\end{equation}
\end{theorem}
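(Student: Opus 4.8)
The plan is to exhibit $\widehat{\mathcal{A}}_g$ as a homogeneous space for $\mathfrak{sp}\left(H'\right)$ by differentiating the tautological action of symplectic endomorphisms on triples $(Z,F,L)$, and then to read off the tangent space as the quotient by the infinitesimal stabilizer. An element $X\in\mathfrak{sp}\left(H'\right)$ acts on such a triple by dragging each datum along its flow: the Lagrangian $Z$ moves by $X|_Z \bmod Z$, the subspace $F$ moves inside $Z$, and the lattice $L$ moves through the induced motion of $F^\perp/F$. First I would check that $X\mapsto$ (this infinitesimal motion) is a homomorphism $\mathfrak{sp}\left(H'\right)\to\mathrm{Vect}(\widehat{\mathcal{A}}_g)$ into regular vector fields: brackets are preserved because the action is induced by the \emph{linear} action of endomorphisms on subspaces of $H'$, and the field is analytic because the assignment lifts, $\mathrm{Sp}(2g,\mathbb{Z})$-equivariantly, to the chart $\widehat{\mathcal{H}}_g$ of \eqref{eq:fromHtoA}. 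With the action in place, both transitivity (in the sense used here) and the tangent-space formula reduce to a single evaluation map $\mathrm{ev}_{(Z,F,L)}\colon \mathfrak{sp}\left(H'\right)\to T_{(Z,F,L)}(\widehat{\mathcal{A}}_g)$: I must show it is surjective and compute its kernel.

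The kernel computation has a clean algebraic half. Since $Z$ is Lagrangian and $F\subseteq Z$, one has the flag $F\subseteq Z = Z^\perp \subseteq F^\perp$. If $X\left(F^\perp\right)\subseteq F$, then $X(Z)\subseteq X\left(F^\perp\right)\subseteq F\subseteq Z$ and $X(F)\subseteq F$, so $X$ preserves both $Z$ and $F$; moreover $X$ induces the zero endomorphism of $F^\perp/F$ and therefore fixes $L$. Hence $\mathrm{ev}_{(Z,F,L)}(X)=0$, so the kernel contains $\mathfrak{sp}_F\left(H'\right)$ of \eqref{eq:spF}. Granting the surjectivity and the reverse inclusion below, this yields the asserted isomorphism $T_{(Z,F,L)}(\widehat{\mathcal{A}}_g)\cong \mathfrak{sp}_F\left(H'\right)\setminus\mathfrak{sp}\left(H'\right)$.

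For surjectivity I would hit the three factors of $\widehat{\mathcal{H}}_g$ separately. For the frame and period factors $\mathcal{B}_g(H'_+)$ and $\mathcal{H}_g$ --- which record the complement $A=F^\perp\cap H'_+$ and the complex structure on $Z/F$ making the form $B$ positive Hermitian --- I would use the splitting $F^\perp/F = Z/F\oplus A$ to lift a prescribed element of $\mathfrak{sp}(F^\perp/F)\cong\mathfrak{sp}(2g)$ to some $X\in\mathfrak{sp}\left(H'\right)$ preserving both $Z$ and $F^\perp$; such an $X$ fixes $Z$ to first order, so its image is concentrated in these two factors, and transitivity of $\mathrm{Sp}(2g,\mathbb{R})$ on the Siegel space and on frames shows they are exhausted. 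For the remaining Lagrangian-Grassmannian factor, the decompositions $H'=Z\oplus H'_+$ and $Z\cong H_-$ (both consequences of \eqref{eq:cond1}) identify its tangent directions with symmetric maps $Z\to H'/Z\cong H'_+$, i.e.\ with $\widetilde{S}^2(H'_+)$; any such form equals $\langle X\,\cdot\,,\,\cdot\,\rangle|_{Z\times Z}$ for the element $X\in\mathfrak{sp}\left(H'\right)$ corresponding to an arbitrary symmetric extension of the form to all of $H'$, and correcting $X$ by the $Z$-preserving elements already produced shows this factor is hit too. The very same coordinate analysis shows that if all three deformation components vanish then $X$ preserves the flag $F\subseteq Z\subseteq F^\perp$ and acts as zero on $F^\perp/F$, i.e.\ $X\left(F^\perp\right)\subseteq F$, giving the reverse inclusion and pinning the kernel down to $\mathfrak{sp}_F\left(H'\right)$.

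I expect the main obstacle to be the surjectivity onto the $\mathcal{B}_g(H'_+)$ and $\mathcal{H}_g$ factors: one must genuinely produce symplectic endomorphisms of the infinite-dimensional $H'$ that preserve $F$ and $F^\perp$, realize a prescribed infinitesimal motion of $A$ and of the period, and extend continuously and compatibly with the analytic structure transported from $\widehat{\mathcal{H}}_g$. The lifting step --- extending an endomorphism of the finite-dimensional quotient $F^\perp/F$ to an element of $\mathfrak{sp}\left(H'\right)$ --- together with matching the differential of the action on $L$ to the transitive $\mathrm{Sp}(2g,\mathbb{R})$-action on $\mathcal{H}_g$ is where the bulk of the careful work lies. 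By contrast, the Lie-algebra homomorphism property and the inclusion $\mathfrak{sp}_F\left(H'\right)\subseteq\ker\mathrm{ev}$ are formal.
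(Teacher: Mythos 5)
Your overall strategy---differentiate the tautological action of $\mathfrak{sp}\left(H'\right)$ on triples, then compute the kernel and image of the evaluation map in the chart $\widehat{\mathcal{H}}_g$ of \eqref{eq:fromHtoA}---is the same as the paper's sketch of the argument from \cite{adc91}, and two of your three steps are sound: the inclusion $\mathfrak{sp}_F\left(H'\right)\subseteq\ker\mathrm{ev}$ is correct and formal, and realizing the Lagrangian-Grassmannian directions $\widetilde{S}^2(H'_+)$ by symmetric extensions of forms on $Z$ is fine. The genuine gap is in the surjectivity step for the frame factor $\mathcal{B}_g(H'_+)$, i.e.\ the factor $H'_+\otimes Z/F$, which is exactly the step you flag as the crux. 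Elements $X\in\mathfrak{sp}\left(H'\right)$ preserving both $Z$ and $F^\perp$ cannot move $A=F^\perp\cap H'_+$ inside $H'_+$ at all: to first order $F^\perp$ is frozen, hence so is $A$, and such an $X$ only moves the basis of $A$ within the fixed $g$-plane $A$ and the lattice within the fixed $F^\perp/F$. That produces at most a $g^2+\tfrac{1}{2}g(g+1)$-dimensional family of tangent directions, whereas the frame factor is infinite-dimensional: it contains all deformations of the $g$-plane $A$ to nearby $g$-planes of $H'_+$, equivalently all deformations of $F$ inside $Z$. For the same reason the appeal to ``transitivity of $\mathrm{Sp}(2g,\mathbb{R})$ on \ldots frames'' cannot work---no finite-dimensional group acts transitively on $\mathcal{B}_g(H'_+)$---and the lifting claim itself is too strong: if $X$ preserves $Z$ and $F^\perp$, the induced element of $\mathfrak{sp}(F^\perp/F)$ must preserve $Z/F$, so an arbitrary element of $\mathfrak{sp}(2g)$ does not lift in the way you describe. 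Since your reverse kernel inclusion is said to follow from ``the very same coordinate analysis,'' it inherits this gap as well.

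The repair, which is in effect what \cite{adc91} does behind the paper's one-line reduction, is to use elements that genuinely move $F^\perp$, namely those with $\langle Xf,a\rangle\neq 0$ for some $f\in F$, $a\in A$. Cleanest formulation: the assignment $X\mapsto\langle X\,\cdot\,,\,\cdot\,\rangle|_{F^\perp\times F^\perp}$ identifies $\mathfrak{sp}_F\left(H'\right)\setminus\mathfrak{sp}\left(H'\right)$ with the space of continuous symmetric bilinear forms on $F^\perp$; the kernel is exactly $\mathfrak{sp}_F\left(H'\right)$ because $X(F^\perp)\subseteq\bigl(F^\perp\bigr)^\perp=F$ is equivalent to the vanishing of this restriction. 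Decomposing along $F^\perp=Z\oplus A$ and using the symplectic identifications $Z^*\cong H'/Z\cong H'_+$ and $A^*\cong Z/F$, one gets $S^2\bigl((F^\perp)^*\bigr)\cong\widetilde{S}^2(H'_+)\times\bigl(H'_+\otimes Z/F\bigr)\times S^2(Z/F)$, matching the three chart factors term by term. In this dictionary your $Z$-and-$F^\perp$-preserving elements only account for the finite-dimensional summand $\bigl(Z/F\bigr)^*\otimes A^*\oplus S^2(A^*)$; the missing infinite-dimensional part of the frame directions is the summand $F^*\otimes A^*$, realized precisely by the $F\times A$ pairings your construction excludes. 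With this identification in place, surjectivity and the exact determination of the kernel follow simultaneously, and the rest of your argument (the homomorphism property and analyticity via the $\mathrm{Sp}(2g,\mathbb{Z})$-equivariant lift to $\widehat{\mathcal{H}}_g$) goes through.
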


We sketch the proof from \cite{adc91}: using the map \eqref{eq:fromHtoA}, and since the action of $\mathrm{Sp}(2g, \mathbb{Z})$ on $\widehat{\mathcal{H}}_g$ is properly discontinuous, one deduces
\[
T_{(Z, F, L)} \left( \widehat{\mathcal{A}}_g \right) \cong \widetilde{S}^2\left(H'_+\right) \times \left( H'_+ \otimes Z/F \right) \times S^2\left(Z/F\right).
\]
This is then shown to be isomorphic to $\mathfrak{sp}_{F} \left( H' \right)  \setminus  \mathfrak{sp}  \left( H' \right)$. We emphasize that the subspace
\begin{equation}
\label{eq:tangenttofiberAtildeA}
\widetilde{S}^2(H'_+) \times \left( H'_+ \otimes Z/F \right)
\end{equation}
is the tangent space to the fiber of the forgetful map $\widehat{\mathcal{A}}_g\rightarrow  {\mathcal{A}}_g$ at $(Z,F,L)$.

\subsection{The universal extended PPAV}
\label{sec:extunivPPAV}
The moduli space $\widehat{\mathcal{A}}_g$ admits a universal family $\widehat{\mathcal{X}}_g \rightarrow \widehat{\mathcal{A}}_g$. 
The fiber over a point $(Z,F,L)$ in $\widehat{\mathcal{A}}_g$ is
\[
\left(H' / K \right) \times_{G} \mathscr{Q}
\]
where $G$ is the group of points of order $2$ in $H'/K$, and $\mathscr{Q}$ is the $G$-torsor of integral quadratic forms $q$ on $L$ satisfying 
\[
q(a) + q (b) - q(a+b) \equiv \frac{1}{2\pi i} \, \langle a, b \rangle \quad \mbox{mod $2$.}
\]
The need for the twist by $G$ becomes clear when one defines the maps in \eqref{eq:bigModulisquare}.
Points of $\widehat{\mathcal{X}}_g$ are denoted as $(Z, F, L, \overline{h}, q)$.
The map $\widehat{\mathcal{X}}_g\rightarrow \widehat{\mathcal{A}}_g$ has no zero-section, however the fiber over $(Z,F,L)$ in $\widehat{\mathcal{A}}_g$ is non-canonically isomorphic to $H'/K$.
From \cite{adc91}, $\widehat{\mathcal{X}}_g$ is rational homotopy equivalent to the universal family $\mathcal{X}_g$  over the stack $\mathcal{A}_g$.
More generally, we will consider families $(Z, F, L, \overline{h}, q)\rightarrow S$ over a smooth scheme $S$.

\subsection{Uniformization of the universal extended PPAV}
\label{sec:unif2}
Recall the Lie algebra $\mathfrak{sp}  \left( H' \right) \ltimes H'$ from \S\ref{sec:Weyl}.

\begin{theorem}[Uniformization of $\widehat{\mathcal{X}}_g$ \cite{adc91}]
\label{thm:unif2}
The moduli space $\widehat{\mathcal{X}}_g$ carries a transitive action of $\mathfrak{sp} \left( H' \right)\ltimes H'$. For $(Z, F, L, \overline{h}, q)$ in $\widehat{\mathcal{X}}_g$, one has
\[
T_{(Z, F, L, \overline{h}, q)} \left( \widehat{\mathcal{X}}_g \right) \cong \mathfrak{sp}_{F} \left( H' \right) \ltimes F  \setminus  \mathfrak{sp}  \left( H' \right) \ltimes H'.
\]
\end{theorem}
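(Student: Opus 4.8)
The plan is to leverage the fibration $\widehat{\mathcal{X}}_g \to \widehat{\mathcal{A}}_g$ together with the uniformization of the base supplied by Theorem~\ref{thm:unif1}, treating the two factors of $\mathfrak{sp}(H') \ltimes H'$ according to their geometric roles: the factor $\mathfrak{sp}(H')$ acts on all of $H'$ and hence on the universal object, inducing a vector field on $\widehat{\mathcal{X}}_g$ that lifts the horizontal field of Theorem~\ref{thm:unif1}, while the abelian ideal $H'$ acts vertically by infinitesimal translation along the fibers. First I would check that these assignments assemble into a genuine homomorphism $\mathfrak{sp}(H') \ltimes H' \to \mathrm{Vect}(\widehat{\mathcal{X}}_g)$: the only relation to verify is $[X,f] = X(f)$, which matches the infinitesimal statement that translating a fiber and then applying the flow of $X$ differs from the reverse order exactly by translation by $X(f)$. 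Since $\mathfrak{sp}(H')$ preserves $\langle\,,\,\rangle$ it transports the data $(Z,F,L)$ and the extension $H'/K$ compatibly, and the discrete datum $(\overline{h},q)$ --- the $G$-torsor $\mathscr{Q}$ of quadratic forms being locally constant --- contributes nothing infinitesimally.

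Next I would compute the tangent space from the short exact sequence of the fibration,
\[
0 \to T_{\mathrm{fib}} \to T_{(Z,F,L,\overline{h},q)}\!\left(\widehat{\mathcal{X}}_g\right) \to T_{(Z,F,L)}\!\left(\widehat{\mathcal{A}}_g\right) \to 0.
\]
The fiber over $(Z,F,L)$ is non-canonically $H'/K$, and since $K/F = L$ is a lattice, writing $H'/K = (H'/F)/L$ identifies the vertical tangent space as $T_{\mathrm{fib}} \cong H'/F$. Transitivity is then immediate: the base component $\mathfrak{sp}(H') \to T_{(Z,F,L)}(\widehat{\mathcal{A}}_g)$ is onto by Theorem~\ref{thm:unif1}, the vertical translation $H' \to H'/F$ is onto, and given $X \in \mathfrak{sp}(H')$ realizing a prescribed horizontal direction one corrects its (generally nonzero) vertical component $X(h)+F$ by a suitable $f \in H'$; hence the evaluation map is surjective.

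Finally I would identify the stabilizer. A pair $(X,f)$ kills the base direction precisely when $X \in \mathfrak{sp}_{F}(H')$ by Theorem~\ref{thm:unif1}; for such $X$ one has $X(K) \subseteq X(F^\perp) \subseteq F$, so $X$ descends to $H'/K$ and its vertical field at $\overline{h}$ is $X(h)+F \in H'/F$, well defined since $X(K)\subseteq F$. Thus $(X,f)$ lies in the stabilizer exactly when $X \in \mathfrak{sp}_{F}(H')$ and $X(h)+f \in F$. Choosing a lift $h \in H'$ of $\overline{h}$ and using that $H'$ is an abelian ideal, the adjoint action computes $\mathrm{Ad}_{\exp h}(X,f) = (X,\, f - X(h))$, so this stabilizer is exactly the $\mathrm{Ad}_{\exp h}$-conjugate of the standard subalgebra $\mathfrak{sp}_{F}(H') \ltimes F$ --- which is indeed a subalgebra, since $X(F^\perp)\subseteq F$ forces $X(g), Y(f) \in F$ for $f,g \in F$. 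As conjugate subalgebras yield isomorphic quotients, the transitive action gives
\[
T_{(Z,F,L,\overline{h},q)}\!\left(\widehat{\mathcal{X}}_g\right) \cong \mathfrak{sp}_{F}(H') \ltimes F \,\setminus\, \mathfrak{sp}(H') \ltimes H'.
\]
The main obstacle is the bookkeeping of this vertical twist: because $\widehat{\mathcal{X}}_g$ has no zero-section, the stabilizer is only conjugate to, not equal to, $\mathfrak{sp}_{F}(H')\ltimes F$, and one must verify that the $X(h)$ term is precisely absorbed by translation by $h$, so that the semidirect-product bracket $[X,f]=X(f)$ is the exact compatibility making the whole assignment a Lie algebra action.
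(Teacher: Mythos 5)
The statement you were asked to prove is one the paper itself never proves: Theorem~\ref{thm:unif2} is imported wholesale from \cite{adc91} (the citation sits in the theorem's title), and the paper records no argument for it beyond the description of the fiber $(H'/K)\times_G\mathscr{Q}$ in \S\ref{sec:extunivPPAV}; even the companion Theorem~\ref{thm:unif1} receives only a reproduced sketch. So there is no in-paper proof to compare against, and your proposal must be judged on its own merits. On those merits it is essentially sound, and it follows the natural route suggested by the paper's surrounding discussion: lift the $\mathfrak{sp}(H')$-action of Theorem~\ref{thm:unif1} to the universal object by functoriality of the data $(Z,F,L,K)$, let $H'$ act by fiberwise translation, verify the semidirect-product bracket, identify $T_{\mathrm{fib}}\cong H'/F$ (correct, since $F\subseteq K$ and $K/F=L$ is discrete), and deduce transitivity by combining surjectivity onto the base with surjectivity of $H'\rightarrow H'/F$. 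Your stabilizer computation is the strongest part: with the natural lift, the stabilizer at $(Z,F,L,\overline{h},q)$ is $\left\{(X,f)\,:\,X\in\mathfrak{sp}_F(H'),\ X(h)+f\in F\right\}$, which you correctly recognize as $\mathrm{Ad}_{\exp h}\bigl(\mathfrak{sp}_F(H')\ltimes F\bigr)$, conjugate to and hence yielding the same quotient as $\mathfrak{sp}_F(H')\ltimes F$; this suffices for the stated isomorphism.

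Two caveats you should make explicit. First, the paper asserts around \eqref{eq:coinvonXtilde} that $\mathfrak{sp}_F(H')\ltimes F$ \emph{is} the stabilizer at $x$, whereas under your normalization of the lift this is literally true only at points admitting a lift $h\in F^\perp$ of $\overline{h}$; in general one gets only the $\mathrm{Ad}_{\exp h}$-conjugate. Either the action of \cite{adc91} is normalized by precisely this inner twist, or the identification is implicitly up to conjugation; your proof is compatible with the theorem as stated, but you should flag that the on-the-nose stabilizer depends on how the lift of $\mathfrak{sp}(H')$ to $\widehat{\mathcal{X}}_g$ is normalized. Second, your verifications via flows ($e^{tX}$ transporting $K$ to $K_t$, conjugating translations, etc.) are heuristic: as the paper's \S\ref{sec:groupind} makes clear, a general element of $\mathfrak{sp}(H')$ does not exponentiate (only $\mathfrak{sp}^+(H')$ and nilpotent directions do), so both the construction of the lifted vector field and the bracket identity $[X,f]=X(f)$ must be checked infinitesimally (e.g., on nilpotent thickenings in the functor-of-points sense the paper uses). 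This is routine to repair and does not undermine the argument, but in the infinite-dimensional setting it is the step where the actual analytic content of \cite{adc91} resides.
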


\subsection{The case of curves and line bundles}
\label{sec:curves_lines}
The moduli spaces $\widehat{\mathcal{A}}_g$ and $\widehat{\mathcal{X}}_g$ are naturally extensions of moduli spaces of curves and line bundles. To see this, 
let $\widehat{\mathcal{M}}_g$ be the moduli space of triples $(C,P,t)$, where $C$ is a smooth genus $g$ curve, $P$ is a point of $C$, and $t$ is a formal coordinate at $P$ \cite{adkp}. 
From \cite{adc91}, there is an \textit{extended Torelli map} $\widehat{\mathcal{M}}_g \rightarrow \widehat{\mathcal{A}}_g$ mapping $(C,P,t)$ to a triple $(Z,F,L)$ with identifications
\[
F\cong H^0\left(C\setminus P, \mathscr{O}_C\right),  \quad
Z/F \cong H^1\left( C, \mathscr{O}_C\right),\quad
L\cong H^1\left(C, \mathbb{Z}\right).
\]
The isotropicity of $F$ with respect to \eqref{eq:symplectic} follows from the residue theorem.

Moreover, let $\widehat{\mathcal{P}}_{d}\rightarrow \widehat{\mathcal{M}}_g$ be the \textit{relative Picard variety} parametrizing isomorphism classes of quintuples $(C,P,t,\mathscr{L},\varphi)$ where $\mathscr{L}$ is a line bundle of degree $d$ on $C$ and $\varphi$ is a  trivialization of the restriction of $\mathscr{L}$ to the formal disc around~$P$.  
For $d=g-1$, one has a commutative diagram 
\begin{equation}
\label{eq:bigModulisquare}
\begin{tikzcd}
\widehat{\mathcal{P}}_{g-1} \arrow[]{r} \arrow{d}& \widehat{\mathcal{X}}_g \arrow{d}\\
\widehat{\mathcal{M}}_g  \arrow[]{r}& \widehat{\mathcal{A}}_g
\end{tikzcd}
\end{equation}
where  $(C,P,t,\mathscr{L},\varphi)$ in $\widehat{\mathcal{P}}_{g-1}$ is mapped to $(Z, F, L, \overline{h}, q)$ in $\widehat{\mathcal{X}}_g$ such that $q$ is  arbitrary in $\mathscr{Q}$, and $\overline{h}$ is the unique point of $H'/K$ given by the isomorphism class of $\left(\mathscr{L}\,\eta^{-1}, \varphi \, dt^{-\frac{1}{2}}\right)$, where $\eta$ is the unique theta-characteristic on $C$ identified by $q$ via Riemann's theorem \cite[(3.23)]{adc91}. By the construction of $\widehat{\mathcal{X}}_g$, this map is independent of the choice of $q$.

Recall the commutative diagram 
\begin{equation}
\label{eq:smallLiesquare}
\begin{tikzcd}
\mathrm{Witt} \ltimes H' \arrow[]{r}{\sigma} \arrow{d}& \mathfrak{sp}  \left( H' \right) \ltimes H' \arrow{d}\\
\mathrm{Witt}  \arrow[]{r}{\tau}& \mathfrak{sp}  \left( H' \right)
\end{tikzcd}
\end{equation}
from Figure \ref{fig:bigLiesquare}. It is shown in \cite[(5.11)]{adc91} that the Lie algebras in \eqref{eq:smallLiesquare} act transitively on the corresponding moduli spaces in \eqref{eq:bigModulisquare}, and the maps in \eqref{eq:smallLiesquare} induce the differentials of the corresponding maps in \eqref{eq:bigModulisquare}.

\subsection{The Hodge and canonical line bundles}
\label{sex:lambdatheta}
Here we review the degree-$2$ cohomology classes on the moduli spaces in consideration.
Let $\Lambda\rightarrow {\mathcal{A}}_g$ be the \textit{Hodge line bundle} defined by 
\[
\Lambda:= \det \left( \pi_*\left(\omega_{{\mathcal{X}}_g/{\mathcal{A}}_g}\right)\right)
\]
where $\pi\colon {\mathcal{X}}_g\rightarrow {\mathcal{A}}_g$ is the natural projection and $\omega_{{\mathcal{X}}_g/{\mathcal{A}}_g}$ is the relative dualizing sheaf.
Let $\Xi\rightarrow {\mathcal{X}}_g$ be the \textit{canonical line bundle} whose restriction to the fibers of $\pi$ induces \textit{twice} the principal polarization  on each fiber. 
For simplicity, we omit the notation for the pull-back when referring to the pull-back of $\Lambda$ and $\Xi$ via the nature projections $\widehat{\mathcal{A}}_g \rightarrow \mathcal{A}_g$
 and $\widehat{\mathcal{X}}_g \rightarrow \mathcal{X}_g$. Let $\lambda$ and $\xi$ be the first Chern classes of $\Lambda$ and $\Xi$, respectively.
 
\begin{proposition}
\label{prop:H2AXMP}
For $g\geq 2$, one has
\begin{align}
\label{eq:H2Aghat}
&H^2\left( \widehat{\mathcal{A}}_g, \mathbb{Q}\right) = H^2\left( {\mathcal{A}}_g, \mathbb{Q}\right) = \mathbb{Q}\lambda,\\
\label{eq:H2Xghat}
&H^2\left( \widehat{\mathcal{X}}_g, \mathbb{Q}\right) = H^2\left( {\mathcal{X}}_g, \mathbb{Q}\right) = \mathbb{Q}\lambda \oplus \mathbb{Q}\xi.
\end{align} 
Moreover, the pull-back via the extended Torelli map induces an identification for $g\geq 3$
\begin{equation}
\label{eq:H2AghatH2Mghat}
H^2(\widehat{\mathcal{A}}_g, \mathbb{Q})\xrightarrow{=}  H^2\left(\widehat{\mathcal{M}}_g, \mathbb{Q}\right) = H^2\left({\mathcal{M}}_g, \mathbb{Q}\right)
\end{equation}
and an inclusion for $g\geq 5$
\begin{equation}
\label{eq:H2XghatH2Pghat}
H^2\left(\widehat{\mathcal{X}}_g, \mathbb{Q}\right)\subset H^2\left(\widehat{\mathcal{P}}_{g-1}, \mathbb{Q}\right)= H^2\left({\mathcal{P}}'_{g-1}, \mathbb{Q}\right).
\end{equation}
\end{proposition}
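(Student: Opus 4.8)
The plan is to reduce every group appearing in the statement to the integral cohomology of a classifying space and then to compare these via the forgetful and Torelli maps; the one point that goes beyond the rational homotopy equivalences already recorded is that all four spaces are quotients of contractible manifolds by the same discrete groups, which forces the comparisons to hold with $\mathbb{Z}$-coefficients. First I would observe that in $\widehat{\mathcal{H}}_g=\widetilde{S}^2(H'_+)\times\mathcal{B}_g(H'_+)\times\mathcal{H}_g$ each factor is contractible: $\widetilde{S}^2(H'_+)$ is a complex vector space, $\mathcal{B}_g(H'_+)$ is an infinite-dimensional Stiefel manifold of $g$-frames, and $\mathcal{H}_g$ is a convex domain. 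Since $\mathrm{Sp}(2g,\mathbb{Z})$ acts freely and properly discontinuously on both $\widehat{\mathcal{H}}_g$ and $\mathcal{H}_g$, both $\widehat{\mathcal{A}}_g$ and $\mathcal{A}_g$ have the homotopy type of $B\mathrm{Sp}(2g,\mathbb{Z})$ and the forgetful map is a homotopy equivalence, giving $H^\ast(\widehat{\mathcal{A}}_g,\mathbb{Z})=H^\ast(\mathcal{A}_g,\mathbb{Z})$ in every degree. For the universal families I would note that the fibre of $\widehat{\mathcal{X}}_g\to\widehat{\mathcal{A}}_g$ is $(H'/K)\times_G\mathscr{Q}\cong H'/K$---the $G$-torsor twist yields a single copy rather than a finite cover---and that $0\to H'_+\to H'/K\to X\to 0$ presents $H'/K$ as a contractible vector-group extension of the abelian variety $X$; hence $\widehat{\mathcal{X}}_g\to\mathcal{X}_g$ is again a fibration with contractible fibres and $H^\ast(\widehat{\mathcal{X}}_g,\mathbb{Z})=H^\ast(\mathcal{X}_g,\mathbb{Z})$.

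Next I would compute the two unhatted groups as group cohomology. From $\mathcal{A}_g\simeq B\mathrm{Sp}(2g,\mathbb{Z})$ one gets $H^2(\mathcal{A}_g,\mathbb{Z})=H^2(\mathrm{Sp}(2g,\mathbb{Z}),\mathbb{Z})=\mathbb{Z}\lambda$ for $g\geq 2$, which is classical. Writing the universal abelian variety as $\mathcal{X}_g=(\mathcal{H}_g\times\mathbb{C}^g)/(\mathrm{Sp}(2g,\mathbb{Z})\ltimes\mathbb{Z}^{2g})$ identifies $\mathcal{X}_g$ with $B(\mathrm{Sp}(2g,\mathbb{Z})\ltimes\mathbb{Z}^{2g})$, and I would run the Lyndon--Hochschild--Serre spectral sequence of $1\to\mathbb{Z}^{2g}\to\mathrm{Sp}(2g,\mathbb{Z})\ltimes\mathbb{Z}^{2g}\to\mathrm{Sp}(2g,\mathbb{Z})\to 1$. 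The surviving degree-two contributions are $H^2(\mathrm{Sp}(2g,\mathbb{Z}),\mathbb{Z})=\mathbb{Z}\lambda$ and $H^0(\mathrm{Sp}(2g,\mathbb{Z}),H^2(\mathbb{Z}^{2g},\mathbb{Z}))=\mathbb{Z}$, the latter spanned by the $\mathrm{Sp}$-invariant symplectic form and representing the fibrewise polarization class $\theta$; granting that the mixed term $H^1(\mathrm{Sp}(2g,\mathbb{Z}),H^1(\mathbb{Z}^{2g},\mathbb{Z}))$ and the spectral-sequence differentials vanish, this gives $H^2(\mathcal{X}_g,\mathbb{Z})=\mathbb{Z}\lambda\oplus\mathbb{Z}\theta$ for $g\geq 2$.

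For $g\geq 3$ I would then bring in the extended Torelli map. Because it carries the Hodge bundle on $\widehat{\mathcal{A}}_g$ to the Hodge bundle on $\widehat{\mathcal{M}}_g$, the induced map $\mathbb{Z}\lambda=H^2(\widehat{\mathcal{A}}_g,\mathbb{Z})\to H^2(\widehat{\mathcal{M}}_g,\mathbb{Z})$ sends $\lambda$ to $\lambda$; combined with $H^2(\widehat{\mathcal{M}}_g,\mathbb{Z})=H^2(\mathcal{M}_g,\mathbb{Z})=\mathbb{Z}\lambda$---where the formal coordinate trivializes the cotangent ($\psi$) class through a Gysin sequence, cf.\ \cite{adkp}, and $\mathrm{Pic}(\mathcal{M}_g)=\mathbb{Z}\lambda$---this gives the asserted identification. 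For the universal families, the forgetful map $\widehat{\mathcal{P}}_{g-1}\to\mathcal{P}'_{g-1}$ has contractible fibres (refining the tangent vector to a formal coordinate and adding the trivialization $\varphi$), so $H^2(\widehat{\mathcal{P}}_{g-1},\mathbb{Z})=H^2(\mathcal{P}'_{g-1},\mathbb{Z})$; and $\widehat{\mathcal{P}}_{g-1}\to\widehat{\mathcal{X}}_g$ pulls $\lambda$ and $\theta$ back to linearly independent classes, giving the injection $\mathbb{Z}\lambda\oplus\mathbb{Z}\theta=H^2(\widehat{\mathcal{X}}_g,\mathbb{Z})\hookrightarrow H^2(\widehat{\mathcal{P}}_{g-1},\mathbb{Z})$. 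This is only an inclusion because $\mathcal{P}'_{g-1}$ carries an additional degree-two class coming from the line bundle, in line with the rank of $H^2(\mathrm{Witt}\ltimes H',\mathbb{C})$ in \cite{adkp}.

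The hard part will be the genuinely \emph{integral} bookkeeping, at the two places where torsion can enter. First, the homotopy equivalences must hold over $\mathbb{Z}$, so one must justify the contractibility of the infinite-dimensional frame manifold $\mathcal{B}_g(H'_+)$ and the absence of a finite covering in the $(H'/K)\times_G\mathscr{Q}$ twist, rather than merely their rational analogues. Second, and most delicate, is the integral Lyndon--Hochschild--Serre computation for $\mathcal{X}_g$: one must show $H^1(\mathrm{Sp}(2g,\mathbb{Z}),H^1(\mathbb{Z}^{2g},\mathbb{Z}))=0$, that the differentials $d_2,d_3$ vanish, and that there is no torsion extension, so that $\theta$ itself---and not a proper multiple of it---generates the fibre summand.
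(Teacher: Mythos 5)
Your strategy is genuinely different from the paper's. The paper quotes the rational homotopy equivalences of \cite{adc91} together with known computations: $H^2(\mathcal{A}_g,\mathbb{Z})=\mathbb{Z}\lambda$ and the Torelli identification from \cite{vanderGeer13cohomology} and \cite{hain1moduli}, $H^2(\mathcal{X}_g,\mathbb{Z})=\mathbb{Z}\lambda\oplus\mathbb{Z}\theta$ from \cite{fringuelli2019picard}, and the $H^2$ of $\mathcal{M}'_g$ and $\mathcal{P}'_{g-1}$ from \cite[5.7--5.8]{adkp}. You instead propose to realize all the spaces as classifying spaces of $\mathrm{Sp}(2g,\mathbb{Z})$ and $\mathrm{Sp}(2g,\mathbb{Z})\ltimes\mathbb{Z}^{2g}$ and compute with the Lyndon--Hochschild--Serre spectral sequence. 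If completed, this would be more self-contained and would even strengthen the reduction step (a rational homotopy equivalence by itself only controls $\mathbb{Q}$-coefficients). But as written the proposal contains one false step and leaves unproved exactly the assertions that carry the content of the proposition.

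The false step: $\mathrm{Sp}(2g,\mathbb{Z})$ does \emph{not} act freely on $\mathcal{H}_g$ --- the element $-\mathrm{id}$ acts trivially, and period points of abelian varieties with extra automorphisms have nontrivial finite stabilizers. The paper states this explicitly ("finite nontrivial stabilizers, hence at least an orbifold structure is required"), and it is the reason $\mathcal{A}_g$ is taken as a quotient stack. So $\mathcal{A}_g$ is not homotopy equivalent to $B\mathrm{Sp}(2g,\mathbb{Z})$ as a space, and the forgetful map $\widehat{\mathcal{A}}_g\to\mathcal{A}_g$ is not a homotopy equivalence: its source is an honest infinite-dimensional manifold, its target is a stack. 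Your conclusion survives, but only after replacing "homotopy equivalence" by the statement that stack cohomology equals equivariant cohomology of the Borel construction, $H^*(\mathcal{A}_g,\mathbb{Z})=H^*_{\mathrm{Sp}(2g,\mathbb{Z})}(\mathcal{H}_g,\mathbb{Z})=H^*(\mathrm{Sp}(2g,\mathbb{Z}),\mathbb{Z})$, and similarly for $\mathcal{X}_g$; this also requires the contractibility of $\mathcal{B}_g(H'_+)$ to be established in the relevant topology rather than asserted. Note too that $H'/K$ is \emph{not} contractible (it is an extension of $X$ by the contractible $H'_+$, hence homotopy equivalent to $X$); what you need, and what is true, is contractibility of the fibers of $\widehat{\mathcal{X}}_g\to\mathcal{X}_g$.

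The substantive gap: your computation of $H^2(\mathcal{X}_g,\mathbb{Z})$ is precisely the part you defer. You must show $H^1\bigl(\mathrm{Sp}(2g,\mathbb{Z}),H^1(\mathbb{Z}^{2g},\mathbb{Z})\bigr)=0$, that the differentials $d_2,d_3$ vanish on the invariant symplectic class, and that $\theta$ itself --- not a proper multiple --- lifts to $H^2(\mathcal{X}_g,\mathbb{Z})$. The last point is genuinely delicate: there is no canonical theta divisor on the universal PPAV (a symmetric theta bundle involves a choice of theta characteristic, i.e.\ a $2$-torsion twist), so the "no proper multiple" claim cannot be waved through; it is the content of the theorem of \cite{fringuelli2019picard} that the paper cites for the second identification in \eqref{eq:H2Xghat}. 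Until these points are supplied, \eqref{eq:H2Xghat} is not proved. Likewise, the linear independence of $\lambda$ and $\theta$ in $H^2\left(\widehat{\mathcal{P}}_{g-1},\mathbb{Z}\right)$, which is what gives the injectivity in \eqref{eq:H2XghatH2Pghat}, is justified in your proposal only by analogy with the rank of $H^2\left(\mathrm{Witt}\ltimes H',\mathbb{C}\right)$; the paper obtains it from the explicit description in \cite[5.7]{adkp}, and your argument needs an equivalent input.
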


The statement combines various results from the literature.

\begin{proof}
For $g\geq 2$, one has $H^2(\mathcal{A}_g, \mathbb{Q})= \mathbb{Q}\lambda$, and for $g\geq 3$, the Torelli map induces an identification $H^2(\mathcal{A}_g, \mathbb{Q})= H^2(\mathcal{M}_g, \mathbb{Q})$  (e.g., \cite[8.1]{vanderGeer13cohomology} and \cite[17.3--17.4]{hain1moduli}). The last identity fails for $g=2$, since $H^2(\mathcal{M}_2, \mathbb{Q})=0$. 

Since $\mathcal{A}_g$ and $\widehat{\mathcal{A}}_g$ have the same rational homotopy type \cite[pg.~16--17]{adc91}, 
\eqref{eq:H2Aghat} follows.
Similarly, $\mathcal{X}_g$ and $\widehat{\mathcal{X}}_g$ have the same rational homotopy type \cite[pg.~17]{adc91}, hence
the first isomorphism in \eqref{eq:H2Xghat}.
The second identification in \eqref{eq:H2Xghat} follows from  \cite{fringuelli2019picard}.

Next, consider the composition of forgetful maps 
\begin{equation}
\label{eq:MhatM'M}
\widehat{\mathcal{M}}_g \rightarrow \mathcal{M}'_{g} \rightarrow \mathcal{M}_g
\end{equation}
where $\mathcal{M}'_{g}$ is the moduli space of triples $(C,P,v)$, where $C$ is a curve of genus $g$, $P$ is a point of $C$, and $v$ is a nonzero tangent vector to $C$ at $P$.
Since $\widehat{\mathcal{M}}_g$ and $\mathcal{M}'_{g}$ have the same rational homotopy type and 
\[
H^2(\mathcal{M}'_{g}, \mathbb{Q})=H^2(\mathcal{M}_{g}, \mathbb{Q})=\mathbb{Q}\lambda
\]
\cite[5.8]{adkp}, the composition of the maps in \eqref{eq:MhatM'M} induces an identification $H^2(\widehat{\mathcal{M}}_g, \mathbb{Q})= \mathbb{Q}\lambda$. 
Using \eqref{eq:H2Aghat}, we deduce  \eqref{eq:H2AghatH2Mghat}. Note that the class of the cotangent line bundle at the marked point vanishes on  $\mathcal{M}'_{g}$ and $\widehat{\mathcal{M}}_g$.

Finally, $\widehat{\mathcal{P}}_{g-1}$ and $\mathcal{P}'_{g-1}$ have the same rational homotopy type and \sloppy \mbox{$H^2({\mathcal{P}}'_{g-1}, \mathbb{Q})$} for $g\geq 5$ is described in \cite[5.7]{adkp}:
 it has dimension three, and $\lambda$ and $\xi$ are linearly independent. The inclusion \eqref{eq:H2XghatH2Pghat} follows, hence the statement.
\end{proof}


\section{Group (ind-)schemes  of symplectic automorphisms}
\label{sec:groupind}

Here we study how to exponentiate the symplectic algebra $\mathfrak{sp}\left(H' \right)$ and related Lie algebras.
We use group ind-schemes 
as in \cite[\S A.1.4]{bzf}. For a topological vector space $E$, let $\mathrm{GL}(E)$ be the group of continuous linear automorphisms of $E$.
Also, for a topological group $G$, let $G_\circ$ be its connected component of the identity.

\subsection{The Lie algebra $\mathfrak{sp}^+\left(H' \right)$}
\label{sec:sp+}
Let $\mathfrak{sp}^+\left(H' \right)$ be the Lie subalgebra of $\mathfrak{sp}\left(H' \right)$ which preserves the subspace $H'_+$, that is:
\[
\mathfrak{sp}^+\left(H' \right) := \left\{ X\in\mathfrak{sp}\left(H' \right) \,\, | \,\, X(H'_+)\subseteq H'_+ \right\}.
\]
This is topologically generated by $S^2\left(H'_+\right)$ and the elements $b_ib_j \in \mathfrak{sp}\left(H' \right)$ such that $i>0>j$.

The Lie algebra $\mathfrak{sp}^+\left(H' \right)$ has the following geometric incarnation.
Recall from \eqref{eq:tangenttofiberAtildeA} that for $(Z,F,L)$ in $\widehat{\mathcal{A}}_g$, the tangent space to the fiber of the forgetful map $\widehat{\mathcal{A}}_g\rightarrow  {\mathcal{A}}_g$ at $(Z,F,L)$ is  $\widetilde{S}^2\left(H'_+\right) \times \left( H'_+ \otimes Z/F \right)$.
One has a natural quotient of Lie algebras
\begin{equation}
\label{eq:sp+toS2HH}
p\colon \mathfrak{sp}^+\left(H' \right) \twoheadrightarrow \widetilde{S}^2\left(H'_+\right) \times \left( H'_+ \otimes Z/F \right).
\end{equation}
The transitive action of $\mathfrak{sp} \left( H' \right)$ on $\widehat{\mathcal{A}}_g$ from \S\ref{sec:spunif} extends the 
transitive action of $\mathfrak{sp}^+\left(H' \right)$ along the fibers of the forgetful map $\widehat{\mathcal{A}}_g\rightarrow \mathcal{A}_g$, and one has
\begin{equation}
\label{eq:Kersp+inspF}
\mathrm{Ker}\left( p \right)\subset  \mathfrak{sp}_F \left(H' \right).
\end{equation}
Hence, the symplectic uniformization of $\widehat{\mathcal{A}}_g$ induces the following double coset realization of the tangent space of ${\mathcal{A}}_g$.
For $(Z,F,L)\in \widehat{\mathcal{A}}_g$, let $(X,B)\in {\mathcal{A}}_g$ be the image of $(Z,F,L)$ via $\widehat{\mathcal{A}}_g\rightarrow  {\mathcal{A}}_g$ as in \eqref{eq:H_+extX}. One has
\begin{equation}
\label{eq:DCAg}
T_{(X,B)} \left( \mathcal{A}_g \right) \cong \mathfrak{sp}_{F} \left( H' \right)  \setminus  \mathfrak{sp}  \left( H' \right) \, / \, \mathfrak{sp}^+\left(H' \right).
\end{equation}

\subsection{The  group scheme $\mathcal{S}p^+\left(H'\right)$}
\label{sec:Sp+}
We will need to exponentiate the Lie algebra $\mathfrak{sp}^+\left(H' \right)$.
For this, consider the group scheme of continuous symplectic automorphisms of $H'$ preserving the subspace $H'_+$, and let $\mathcal{S}p^+\left(H'\right)$ be its connected component of the identity, that is:
\[
\mathcal{S}p^+\left(H'\right) := \left\{\rho\in \mathrm{GL}(H') \, \Bigg| \,
\begin{array}{l}
\langle \rho(a), \rho(b) \rangle = \langle a, b \rangle \quad \mbox{for all $a,b\in H'$,}\\[5pt]
\rho\left( H'_+\right)\subseteq H'_+ 
\end{array}
\right\}_\circ.
\]
The group scheme  $\mathcal{S}p^+\left(H'\right)$ represents the group functor which assigns to  a $\mathbb{C}$-algebra $R$ the identity component $\mathcal{S}p^+\left(H'(R)\right)$  of the group of continuous symplectic automorphisms of $H' (R)$ preserving the subspace $H'_+(R)=tR\llbracket t \rrbracket$.

One easily checks that
\[
\mathrm{Lie}(\mathcal{S}p^+\left(H')\right)=\mathfrak{sp}^+\left(H' \right).
\]
Also, since $\mathcal{S}p^+\left(H'\right)$ is  by definition connected,  elements in $\mathcal{S}p^+\left(H'\right)$ are products of exponentials of elements in $\mathfrak{sp}^+\left(H' \right)$.

The adjoint action of the group scheme $\mathcal{S}p^+\left(H'\right)$ on $\mathfrak{sp}^+\left(H' \right)$ induces a 
transitive action of  $\mathcal{S}p^+\left(H'\right)$ on the fibers of $\widehat{\mathcal{A}}_g\rightarrow \mathcal{A}_g$. 
This action can also be described as follows. The natural action of $\mathcal{S}p^+\left(H'\right)$ on $H'$ induces a natural action of $\mathcal{S}p^+\left(H'\right)$ on the fibers of $\widehat{\mathcal{H}}_g\rightarrow {\mathcal{H}}_g$, and after factoring by $\mathrm{Sp}(2g, \mathbb{Z})$, this descends to an action of $\mathcal{S}p^+\left(H'\right)$ on the fibers of $\widehat{\mathcal{A}}_g\rightarrow {\mathcal{A}}_g$.
 Theorem \ref{thm:unif1} as shown in \cite{adc91} can be recasted as:

\begin{theorem}[after \cite{adc91}]
\label{thm:unif1+}
The moduli space $\widehat{\mathcal{A}}_g$ carries a transitive action of $\mathfrak{sp} \left( H' \right)$ compatible with the  
transitive action of $\mathcal{S}p^+\left(H'\right)$ on the fibers of  \mbox{$\widehat{\mathcal{A}}_g\rightarrow \mathcal{A}_g$}.
\end{theorem}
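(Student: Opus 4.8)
The plan is to deduce Theorem~\ref{thm:unif1+} from the symplectic uniformization of Theorem~\ref{thm:unif1} by exhibiting $\mathcal{S}p^+\left(H'\right)$ as the exponentiation of the fiberwise infinitesimal symmetries $\mathfrak{sp}^+\left(H'\right)\subset\mathfrak{sp}\left(H'\right)$. First I would restrict the transitive $\mathfrak{sp}\left(H'\right)$-action on $\widehat{\mathcal{A}}_g$ furnished by Theorem~\ref{thm:unif1} to the subalgebra $\mathfrak{sp}^+\left(H'\right)$. Since every $X\in\mathfrak{sp}^+\left(H'\right)$ preserves both $H'_+$ and the form $\langle\,,\,\rangle$, it preserves $A=F^\perp\cap H'_+$ together with the induced real structure and the Hermitian form $B$ on $Z/F$; hence $X$ leaves the underlying PPAV of \eqref{eq:H_+extX} unchanged, and the associated vector field is tangent to the fibers of the forgetful map $\widehat{\mathcal{A}}_g\to\mathcal{A}_g$. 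Combining the surjection \eqref{eq:sp+toS2HH} with the identification \eqref{eq:tangenttofiberAtildeA} of the tangent space to the fiber, the evaluation map $\mathfrak{sp}^+\left(H'\right)\to T_{(Z,F,L)}\left(\mathrm{fiber}\right)$ is surjective, so $\mathfrak{sp}^+\left(H'\right)$ acts infinitesimally transitively along the fibers.

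Next I would integrate this to the group scheme. An element $\rho\in\mathcal{S}p^+\left(H'\right)$ acts on triples by $\rho\cdot(Z,F,L)=(\rho Z,\rho F,\rho L)$, where $\rho L$ denotes the image of $L$ under the isomorphism $F^\perp/F\to\rho(F)^\perp/\rho(F)$ induced by $\rho$ (using $\rho(F^\perp)=\rho(F)^\perp$, as $\rho$ preserves $\langle\,,\,\rangle$); because $\rho$ also preserves $H'_+$, one checks directly that conditions \eqref{eq:cond1}--\eqref{eq:cond4} are preserved and that the resulting PPAV is unchanged, so this action preserves the fibers of $\widehat{\mathcal{A}}_g\to\mathcal{A}_g$, and differentiating it recovers the $\mathfrak{sp}^+\left(H'\right)$-action of the previous paragraph. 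Since $\mathcal{S}p^+\left(H'\right)$ is connected with $\mathrm{Lie}\left(\mathcal{S}p^+\left(H'\right)\right)=\mathfrak{sp}^+\left(H'\right)$ and its elements are products of exponentials of $\mathfrak{sp}^+\left(H'\right)$, the infinitesimal transitivity above upgrades to transitivity of $\mathcal{S}p^+\left(H'\right)$ on each connected fiber; together with \eqref{eq:DCAg} this also re-derives the double-coset description of $T_{(X,B)}\left(\mathcal{A}_g\right)$.

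Finally, the compatibility with the ambient $\mathfrak{sp}\left(H'\right)$-action is automatic, since both actions arise from the single action of continuous symplectic transformations on $H'$: for $\rho\in\mathcal{S}p^+\left(H'\right)$ and $X\in\mathfrak{sp}\left(H'\right)$ one has $\rho_*(X)=\mathrm{Ad}_\rho(X)$ as vector fields on $\widehat{\mathcal{A}}_g$, which is exactly the Harish-Chandra compatibility of the pair $\left(\mathfrak{sp}\left(H'\right),\mathcal{S}p^+\left(H'\right)\right)$. The step I expect to be the main obstacle is the integration in the infinite-dimensional analytic setting---verifying that the formal exponentials of $\mathfrak{sp}^+\left(H'\right)$ genuinely define an action of $\mathcal{S}p^+\left(H'\right)$ on $\widehat{\mathcal{A}}_g$, and that infinitesimal transitivity yields honest transitivity on the fibers rather than merely an open orbit. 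I would handle this by passing to the cover $\widehat{\mathcal{H}}_g=\widetilde{S}^2(H'_+)\times\mathcal{B}_g(H'_+)\times\mathcal{H}_g$, where $\mathcal{S}p^+\left(H'\right)$ fixes the $\mathcal{H}_g$-factor and acts on the two remaining connected factors, reducing fiberwise transitivity to a transparent statement there.
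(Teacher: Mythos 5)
Your proposal is correct and follows essentially the same route as the paper, which presents this theorem as a recasting of the Arbarello--De Concini uniformization: fiberwise infinitesimal transitivity of $\mathfrak{sp}^+\left(H'\right)$ via the quotient \eqref{eq:sp+toS2HH} and the identification \eqref{eq:tangenttofiberAtildeA}, integration to the group using that $\mathcal{S}p^+\left(H'\right)$ is connected with $\mathrm{Lie}\left(\mathcal{S}p^+\left(H'\right)\right)=\mathfrak{sp}^+\left(H'\right)$ so that its elements are products of exponentials, and the adjoint (Harish--Chandra) compatibility with the ambient $\mathfrak{sp}\left(H'\right)$-action. One caveat of phrasing only: a Lie algebra element $X\in\mathfrak{sp}^+\left(H'\right)$ does not literally preserve $F^\perp$ or $A$ (that would put it in $\mathfrak{sp}_F\left(H'\right)$); the correct statement is the group-level one in your second paragraph, from which tangency of the infinitesimal action to the fibers follows by differentiation, as you yourself note.
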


\begin{remark}
As the group $\mathrm{GL}(H)$ is not connected \cite[pg.~11]{adkp},  taking the identity component in the definition of $\mathcal{S}p^+\left(H'\right)$ can not be omitted without modifying the outcome.
\end{remark}

\subsection{The group ind-scheme $\mathcal{S}p\left(H'\right)$}
More generally, consider the group functor which assigns to  a $\mathbb{C}$-algebra $R$ the connected group $\mathcal{S}p\left(H'(R)\right)$  of  continuous linear automorphisms of $H' (R)$ of the following type 
\[
\mathcal{S}p\left(H'(R)\right) := \left\{\rho\in \mathrm{GL}(H'(R)) \, \Bigg| \,
\begin{array}{l}
\langle \rho(a), \rho(b) \rangle = \langle a, b \rangle \quad \mbox{for all $a,b\in H'$,}\\[5pt]
\rho\left( H'_+(R)\right) \subseteq H'_+(R) \oplus H_-(R_{\mathrm{n}})  
\end{array}
\right\}_\circ
\]
where $R_{\mathrm{n}}\subseteq R$ is the $\mathbb{C}$-subalgebra consisting of nilpotent elements.
The presence of nilpotent elements entails that this functor can only be represented by  an ind-scheme, as schemes are insufficient (this is as in \cite[\S 6.2.3]{bzf}).
Let  $\mathcal{S}p\left(H'\right)$ be the group ind-scheme  representing this functor. Naturally, $\mathcal{S}p^+\left(H'\right)$ is a subgroup of $\mathcal{S}p\left(H'\right)$.
One checks that 
\[
\mathrm{Lie}\left(\mathcal{S}p\left(H'\right)\right)=\mathfrak{sp}\left(H' \right).
\]

\subsection{Derivations and ring automorphisms}
\label{sec:DerAut}
Here we remark how the Lie algebras and group (ind-)schemes of this section naturally extend analogous objects studied in \cite{beilinson1991quantization, bzf} and
related to the geometry of curves. Namely, let $\mathrm{Der}(\mathcal{O})$ 
 be the functor assigning to a $\mathbb{C}$-algebra $R$ the Lie algebra
\[
\mathrm{Der}\left(\mathcal{O}(R)\right) = R\llbracket t \rrbracket \partial_t.
\]
This is  the Lie subalgebra of the Lie algebra $\mathrm{Witt}(R)$ from \eqref{sec:WittVir} topologically generated over $R$ by $L_p$ for $p\geq -1$.
The inclusion $\tau\colon \mathrm{Witt} \hookrightarrow \mathfrak{sp}\left(H' \right)$ from \eqref{eq:tau} maps $\mathrm{Der}\left(\mathcal{O}\right)$ to $\mathfrak{sp}^+\left(H' \right)$, so that one has a commutative diagram of Lie algebras
\begin{equation}
\label{eq:Der0Osquare}
\begin{tikzcd}
\mathrm{Witt} \arrow[hookrightarrow]{r}{\tau} & \mathfrak{sp}\left(H' \right)\\
\mathrm{Der}\left(\mathcal{O}\right) \arrow[hookrightarrow]{u} \arrow[hookrightarrow]{r}{} & \mathfrak{sp}^+\left(H' \right). \arrow[hookrightarrow]{u}
\end{tikzcd}
\end{equation}

From \cite{adkp}, the moduli space $\widehat{\mathcal{M}}_g$ carries a transitive action of $\mathrm{Witt}$. This action is compatible with a simply transitive action of $\mathrm{Der}\left(\mathcal{O}\right)$ on the fibers of \mbox{$\widehat{\mathcal{M}}_g\rightarrow \mathcal{M}_g$}: the elements $L_p$ with $p\geq 0$ topologically generate the tangent directions to the changes of the formal coordinate at the marked point, and the element $L_{-1}$ spans the tangent space to the curve at the  marked point.
Hence, for $(C, P, t)$ in $\widehat{\mathcal{M}}_g$, one has 
\begin{equation}
\label{eq:DCMg}
T_C\left( \mathcal{M}_g \right) \cong \mathrm{Vect}(C\setminus P) \setminus \mathrm{Witt} \, / \, \mathrm{Der}\left(\mathcal{O}\right).
\end{equation}

From \cite{adc91}, the moduli space $\widehat{\mathcal{A}}_g$ carries a transitive action of $\mathfrak{sp}\left(H' \right)$ and by factoring  $\mathrm{Witt}$ and $\mathfrak{sp}\left(H' \right)$ by appropriate Lie subalgebras, the inclusion $\tau$ induces the differential of the extended Torelli map $\widehat{\mathcal{M}}_g\rightarrow \widehat{\mathcal{A}}_g$.

We emphasize how the transitive action of $\mathfrak{sp}^+\left(H' \right)$ on the fibers of \mbox{$\widehat{\mathcal{A}}_g\rightarrow \mathcal{A}_g$}
is naturally the extension of the simply transitive action of $\mathrm{Der}\left(\mathcal{O}\right)$ on the fibers of \mbox{$\widehat{\mathcal{M}}_g\rightarrow \mathcal{M}_g$},
and \eqref{eq:DCAg} extends  \eqref{eq:DCMg}. 

\smallskip

One has a similar picture for the related group (ind-)schemes. Namely, 
consider first the group scheme $\mathrm{Aut}\left(\mathcal{O}\right)$ representing the functor which assigns to a $\mathbb{C}$-algebra $R$ the group of continuous ring automorphisms of $R\llbracket t \rrbracket$ preserving the ideal $tR\llbracket t \rrbracket$. Explicitly, this is:
\[
\mathrm{Aut}\left(\mathcal{O}(R)\right):=
\left\{ t\mapsto a_1\,t + a_2\,t^2+\dots \, | \: \mbox{$a_i\in R $ and $a_1$ is a unit} \right\}.
\]
Since the symplectic form \eqref{eq:symplectic} is invariant by changes of the parameter $t$ and $tR\llbracket t \rrbracket=H'_+(R)$ as vector spaces,
it follows that $\mathrm{Aut}\left(\mathcal{O}\right)$ is a subgroup of $\mathcal{S}p^+\left(H'\right)$. 
However, the Lie algebra of $\mathrm{Aut}\left(\mathcal{O}\right)$ is smaller than $\mathrm{Der}\left(\mathcal{O}\right)$:
As remarked in \cite[\S6.2.3]{bzf}, one has $\mathrm{Lie}\left(\mathrm{Aut}\left(\mathcal{O}\right)\right)=t \,\mathrm{Der}\left(\mathcal{O}\right)$, and in order to exponentiate $\mathrm{Der}\left(\mathcal{O}\right)$, one needs to consider a group ind-scheme. For this, let $\underline{\mathrm{Aut}}\left(\mathcal{O}\right)$ be the group ind-scheme representing the functor which assigns to a $\mathbb{C}$-algebra $R$ the group of continuous ring automorphisms of $R\llbracket t \rrbracket$:
\[
\underline{\mathrm{Aut}}\left(\mathcal{O}(R)\right):=
\left\{ t\mapsto a_0 + a_1\,t + a_2\,t^2+\dots \, \Bigg| \: 
\begin{array}{l}
\mbox{$a_i\in R $, $a_1$ is a unit,} \\[5pt]
\mbox{and $a_0$ is nilpotent} 
\end{array}
\right\}.
\]
Similarly, let $\mathrm{Aut}\left(\mathcal{K}\right)$ be the group ind-scheme representing the functor which assigns to a $\mathbb{C}$-algebra $R$
the group of continuous ring automorphisms of $R(\!( t )\!)$:
\[
\mathrm{Aut}\left(\mathcal{K}(R)\right):=
\left\{ t\mapsto \sum_{i\geq i_0} a_i \, t^i \, \Bigg| \: 
\begin{array}{l}
\mbox{$a_i\in R $, $a_1$ is a unit,} \\[5pt]
\mbox{$a_i$ nilpotent for $i\leq 0$}
\end{array}
 \right\}.
\]
One has 
\begin{align*}
\mathrm{Lie}\left(\underline{\mathrm{Aut}}\left(\mathcal{O}\right)\right)=\mathrm{Der}\left(\mathcal{O}\right)
\qquad \mbox{and}\qquad
\mathrm{Lie}\left(\mathrm{Aut}\left(\mathcal{K}\right)\right)=\mathrm{Witt}.
\end{align*}
We refer to \cite{beilinson1991quantization, bzf} for more on $\mathrm{Der}\left(\mathcal{O}\right)$, $\underline{\mathrm{Aut}}\left(\mathcal{O}\right)$ and $\mathrm{Aut}\left(\mathcal{K}\right)$.

Elements in $\underline{\mathrm{Aut}}\left(\mathcal{O}\right)$ and $\mathrm{Aut}\left(\mathcal{K}\right)$ naturally induce elements in $\mathrm{GL}(H)$, and after composing with the projection  $H\rightarrow H'$, they induce symplectic elements in $\mathrm{GL}(H')$.
Hence, one has a commutative diagram of group (ind)-schemes
\[
\begin{tikzcd}
\mathrm{Aut}\left(\mathcal{K}\right) \arrow[hookrightarrow]{r} & \mathcal{S}p\left(H'\right)\\
\underline{\mathrm{Aut}}\left(\mathcal{O}\right) \arrow[hookrightarrow]{u} \arrow[hookrightarrow]{r}{} & \mathcal{S}p^+\left(H'\right) \arrow[hookrightarrow]{u}
\end{tikzcd}
\]
such that the differentials of these maps restricted to the tangent spaces at the identities are given by \eqref{eq:Der0Osquare}.

\subsection{The group ind-scheme $\mathcal{M}p\left(H'\right)$}
We will also consider the group ind-scheme $\mathcal{M}p\left(H'\right)$ given by the central extension
\[
1 \rightarrow \mathbb{G}_m \rightarrow \mathcal{M}p\left(H'\right) \rightarrow \mathcal{S}p\left(H'\right)  \rightarrow 1
\]
corresponding to the metaplectic algebra $\mathfrak{mp}\left( H' \right)$ from \eqref{eq:mp}.

\subsection{The group scheme $\mathcal{S}p^+\left(H'\right)\ltimes \mathcal{O}_1^\times$}
Finally, we describe  the analogous Lie algebras and group (ind)-schemes for the moduli space $\widehat{\mathcal{X}}_g$.
The transitive action of $\mathfrak{sp} \left( H' \right)\ltimes H'$ on $\widehat{\mathcal{X}}_g$ from \S\ref{sec:unif2} extends a transitive action of
\mbox{$\mathfrak{sp}^+ ( H' )\ltimes H'_+$} on the fibers of  \mbox{$\widehat{\mathcal{X}}_g\rightarrow \mathcal{X}_g$}.
Let $\mathcal{O}_1^\times$ be the group scheme of invertible Taylor series with constant term $1$. 
This is a subgroup of the group scheme $\mathcal{O}^\times$ of invertible Taylor series from \cite[\S 20.3.4]{bzf}. One has $\mathrm{Lie}\left(\mathcal{O}^\times \right)=H_+$ and $\mathrm{Lie}\left(\mathcal{O}_1^\times \right)=H'_+$.
The natural action of $\mathcal{S}p^+\left(H'\right)\ltimes \mathcal{O}_1^\times$ on $H'$ induces a natural action of $\mathcal{S}p^+\left(H'\right)\ltimes \mathcal{O}_1^\times$ on the fibers of $\widehat{\mathcal{X}}_g\rightarrow {\mathcal{X}}_g$. 

\begin{theorem}[after \cite{adc91}]
\label{thm:unif2+}
The moduli space $\widehat{\mathcal{X}}_g$ carries a transitive action of $\mathfrak{sp} \left( H' \right)\ltimes H'$ compatible with the 
transitive action of \sloppy \mbox{$\mathcal{S}p^+\left(H'\right)\ltimes \mathcal{O}_1^\times$} on the fibers of  \mbox{$\widehat{\mathcal{X}}_g\rightarrow \mathcal{X}_g$}.
\end{theorem}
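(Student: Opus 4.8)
The plan is to establish Theorem~\ref{thm:unif2+} as the natural companion to Theorem~\ref{thm:unif1+}, lifting the two transitive actions already recorded in Theorem~\ref{thm:unif2} and in the preceding discussion from the level of Lie algebras to that of group (ind-)schemes. The statement has two parts that I would treat separately and then combine. First, one must identify the Lie algebra of the group scheme $\mathcal{S}p^+\left(H'\right)\ltimes \mathcal{O}_1^\times$ acting on the fibers of $\widehat{\mathcal{X}}_g\rightarrow \mathcal{X}_g$; second, one must check that this fiberwise action is compatible with the global transitive action of $\mathfrak{sp}\left(H'\right)\ltimes H'$ furnished by Theorem~\ref{thm:unif2}, in the precise sense that exponentiating the fiber directions inside $\mathfrak{sp}^+\left(H'\right)\ltimes H'_+$ produces the group action.

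First I would compute the Lie algebra of the semidirect product. Since $\mathrm{Lie}(\mathcal{S}p^+\left(H'\right))=\mathfrak{sp}^+\left(H'\right)$ was already checked in \S\ref{sec:Sp+}, and $\mathrm{Lie}\left(\mathcal{O}_1^\times\right)=H'_+$ is recorded just above the statement, the Lie algebra of the semidirect product is $\mathfrak{sp}^+\left(H'\right)\ltimes H'_+$, the compatibility of brackets being inherited from the action of $\mathfrak{sp}^+\left(H'\right)$ on $H'_+$ by the symplectic representation restricted from \S\ref{sec:Weyl}. I would then recall from the discussion opening \S\ref{sec:extunivPPAV} that the fiber of $\widehat{\mathcal{X}}_g\rightarrow\widehat{\mathcal{A}}_g$ over $(Z,F,L)$ is non-canonically isomorphic to $H'/K$; together with the tangent-space computation of Theorem~\ref{thm:unif2}, the subspace $\mathfrak{sp}^+\left(H'\right)\ltimes H'_+$ should surject onto the tangent space to the fiber of $\widehat{\mathcal{X}}_g\rightarrow\mathcal{X}_g$, exactly paralleling the quotient \eqref{eq:sp+toS2HH} for $\widehat{\mathcal{A}}_g$.

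The construction of the fiberwise action itself mirrors \S\ref{sec:Sp+}: because $\mathcal{S}p^+\left(H'\right)$ is by definition connected and $\mathcal{O}_1^\times$ is a pro-unipotent group scheme, elements of $\mathcal{S}p^+\left(H'\right)\ltimes\mathcal{O}_1^\times$ are products of exponentials of elements of $\mathfrak{sp}^+\left(H'\right)\ltimes H'_+$, so the Lie algebra action integrates to a group-scheme action on the fibers. The $\mathfrak{sp}^+\left(H'\right)$-factor acts through its adjoint incarnation on the $(Z,F,L)$-data exactly as in Theorem~\ref{thm:unif1+}, moving the fiber of $\widehat{\mathcal{X}}_g\rightarrow\mathcal{X}_g$ in step with the base; the $\mathcal{O}_1^\times$-factor acts by translation along the abelian-variety direction $H'/K$, using that $H'_+$ maps into this fiber. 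The compatibility assertion then amounts to checking that these two integrated actions agree where they overlap and together exhaust the fiber directions, which follows from the transitivity already proven at the infinitesimal level in Theorem~\ref{thm:unif2}.

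The main obstacle I anticipate is verifying that the $\mathcal{O}_1^\times$-translation is well defined on the twisted fiber $\left(H'/K\right)\times_{G}\mathscr{Q}$ rather than merely on $H'/K$, i.e., that the quadratic-form twist by the order-two subgroup $G$ is respected by the exponentiated $H'_+$-action. Since $\mathcal{O}_1^\times$ is connected and pro-unipotent it cannot move the discrete torsor $\mathscr{Q}$, so the translation descends to the twisted product; nevertheless this point must be stated carefully, as it is exactly where the need for the $G$-twist noted in \S\ref{sec:extunivPPAV} interacts with the group action. The remainder is a routine transcription of the argument behind Theorem~\ref{thm:unif1+} into the semidirect-product setting, invoking Theorem~\ref{thm:unif2} for transitivity and the Lie-algebra identifications of \S\ref{sec:Weyl} for the bracket compatibility.
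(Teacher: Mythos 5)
Your proposal is correct and takes essentially the same route as the paper: Theorem~\ref{thm:unif2+} is stated there without a separate proof, as a recasting of the uniformization of $\widehat{\mathcal{X}}_g$ from \cite{adc91} (Theorem~\ref{thm:unif2}), with the justification carried by the surrounding discussion in \S\ref{sec:groupind} --- the identifications $\mathrm{Lie}\left(\mathcal{S}p^+\left(H'\right)\right)=\mathfrak{sp}^+\left(H'\right)$ and $\mathrm{Lie}\left(\mathcal{O}_1^\times\right)=H'_+$, the observation that the transitive action of $\mathfrak{sp}\left(H'\right)\ltimes H'$ restricts to a transitive action of $\mathfrak{sp}^+\left(H'\right)\ltimes H'_+$ along the fibers of $\widehat{\mathcal{X}}_g\rightarrow\mathcal{X}_g$, and exponentiation via connectedness, exactly as you outline. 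Your additional verification that the $\mathcal{O}_1^\times$-translations descend through the twist $\left(H'/K\right)\times_{G}\mathscr{Q}$ addresses a point the paper leaves implicit, and your resolution (a connected group cannot move the discrete $G$-torsor $\mathscr{Q}$) is sound.
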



\section{On the cohomology of the Lie algebras}
\label{sec:cohLie}

Here we study the cohomology of the Lie algebras $\mathfrak{sp} \left( H' \right)$ and $\mathfrak{sp} \left( H' \right)\ltimes H'$. 
We  will then show in Theorem \ref{thm:canisoH2spAX} that their $H^2$ spaces are canonically isomorphic to $H^2\left( \mathcal{A}_g, \mathbb{C}\right)$ and $H^2\left( \mathcal{X}_g, \mathbb{C}\right)$, respectively. 

For a topological Lie algebra $\mathfrak{g}$, let $H^*(\mathfrak{g}, \mathbb{C})$ be its continuous Lie algebra cohomology with complex coefficients.

Recall the two-cocycle $\psi$ of $\mathfrak{gl}(H)$ from $\eqref{eq:psi}$, and consider the  two-cocycles on $\mathfrak{sp} \left( H' \right)\ltimes H'$ 
\begin{align*}
\alpha \left( X +f,Y+g \right) &:= \psi(X,Y),\\
\beta \left( X +f,Y+g \right) &:= \psi(f,g) = -\mathrm{Res}_{t=0}  \, f \, dg,\\
\gamma \left( X +f,Y+g \right) &:= \psi(X,g) - \psi(Y,f)
\end{align*}
for $X,Y\in \mathfrak{sp}\left(H' \right)$ and $f,g\in H'$, where $f,g$ act on $H$ by multiplication. Hence, $\psi=\alpha+\beta+\gamma$ on $\mathfrak{sp} \left( H' \right)\ltimes H'$.
Restricting to $\mathrm{Witt}\ltimes H'$, one has
\begin{align*}
\alpha \left( f\partial_t +g, h\partial_t+k \right) &= \frac{1}{6}\,\mathrm{Res}_{t=0} \, f \, dh'', \\
\gamma \left( f\partial_t +g, h\partial_t+k \right) &= -\frac{1}{2}\, \mathrm{Res}_{t=0}  \left( f \, dk' - h \, dg' \right).
\end{align*}
Here we use the notation $k' := \partial_t k$, and likewise for $g'$ and $h''$.
Similarly, consider the two-cocycle $\alpha(X,Y):=\psi(X,Y)$ on $\mathfrak{sp} \left( H' \right)$.
It is shown in \cite[2.1]{adkp} that $H^1\left(\mathrm{Witt}, \mathbb{C} \right)= H^1\left( \mathrm{Witt}\ltimes H', \mathbb{C} \right)=0$, and 
\[
H^2\left(\mathrm{Witt}, \mathbb{C} \right)= \mathbb{C}\,\overline{\alpha} \qquad \mbox{and}\qquad
H^2\left( \mathrm{Witt}\ltimes H', \mathbb{C} \right)= \mathbb{C}\,\overline{\alpha}\oplus\mathbb{C}\,\overline{\beta}\oplus\mathbb{C}\,\overline{\gamma}
\]
where $\overline{\alpha}$ is the class of $\alpha$, and similarly for $\overline{\beta}$ and $\overline{\gamma}$.
Recall  $\tau\colon \mathrm{Witt} \hookrightarrow \mathfrak{sp}\left(H' \right)$ from \eqref{eq:tau} and $\sigma\colon \mathrm{Witt} \ltimes H' \hookrightarrow \mathfrak{sp}\left(H' \right) \ltimes H'$ from \eqref{eq:sigma}.

\begin{proposition}
\label{prop:cohLiespH}
One has 
\begin{align}
\label{eq:H1_1}
H^1\left(\mathfrak{sp} \left( H' \right), \mathbb{C}\right)& =0,\\ 
\label{eq:H1_2}
H^1\left( \mathfrak{sp} \left( H' \right)\ltimes H', \mathbb{C} \right) &=0,\\
\label{eq:H2sp}
H^2\left(\mathfrak{sp} \left( H' \right), \mathbb{C}\right) &= \mathbb{C} \,\overline{\alpha},\\
\label{eq:H2spH}
H^2\left(\mathfrak{sp} \left( H' \right)\ltimes H', \mathbb{C} \right) &= \mathbb{C}\,\overline{\alpha} \oplus \mathbb{C}\,\overline{\beta}.
\end{align}
Moreover, the pull-back via $\tau$ induces the identification
\[
\tau^*\colon H^2\left(\mathfrak{sp} \left( H' \right), \mathbb{C}\right) \xrightarrow{=} H^2\left(\mathrm{Witt}, \mathbb{C}\right),
\]
and the pull-back via $\sigma$ induces the injection
\[
\sigma^*\colon H^2\left(\mathfrak{sp} \left( H' \right)\ltimes H', \mathbb{C} \right) \hookrightarrow H^2\left(\mathrm{Witt}\ltimes H', \mathbb{C}\right),
\quad \alpha\mapsto \alpha, \quad \beta \mapsto \frac{1}{2}\,\alpha+\psi.
\]
\end{proposition}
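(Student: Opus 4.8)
My plan is to reduce each symplectic statement to the corresponding Witt computation of \cite{adkp}, using gradings and the Hochschild--Serre spectral sequence, and to extract the maps $\tau^*,\sigma^*$ by explicit residue calculations. I would first dispatch the two $H^1$ vanishings by showing that $\mathfrak{sp}(H')$ and $\mathfrak{sp}(H')\ltimes H'$ are topologically perfect, so that $H^1(\mathfrak{g},\mathbb{C})=(\mathfrak{g}/[\mathfrak{g},\mathfrak{g}])^*=0$. Perfectness of $\mathfrak{sp}(H')$ follows from the density of $S^2(H')$ and the bracket \eqref{eq:U2}: for $a,b\in H'$ and $e,f\in H'$ with $\langle e,f\rangle=1$, the bracket $[ea,fb]$ equals $ab$ plus a sum of other products, and a short spanning argument using nondegeneracy of $\langle\,,\,\rangle$ gives $[S^2(H'),S^2(H')]=S^2(H')$. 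For the semidirect product one notes in addition that \eqref{eq:S2H'onH'} already yields $[\mathfrak{sp}(H'),H']=H'$ (take $k$ with $\langle f,k\rangle=1$ and $\langle g,k\rangle=0$, so $fg(k)=g$), so $H'$ lies in the commutator and perfectness follows.

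For $H^2(\mathfrak{sp}(H'),\mathbb{C})$ the lower bound is free: the residue formula $\alpha(f\partial_t+g,h\partial_t+k)=\tfrac16\mathrm{Res}\,f\,dh''$ shows that $\tau^*\overline{\alpha}$ is the Gelfand--Fuks generator of $H^2(\mathrm{Witt},\mathbb{C})$, so $\overline{\alpha}\neq0$ and $\tau^*$ is onto. For the upper bound I would grade $\mathfrak{sp}(H')$ by $\mathrm{ad}\,\tau(L_0)$ and use the contraction homotopy along the grading vector to reduce continuous cohomology to the degree-$0$ subcomplex; every class is then represented by a cocycle $c$ with $c(\mathfrak{g}_m,\mathfrak{g}_n)=0$ for $m+n\neq0$. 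Feeding the cocycle identity with triples from $\mathfrak{g}_0$ and $\mathfrak{g}_{\pm n}$ and using the explicit relations pins $c$ down to a multiple of $\alpha$ modulo coboundaries, giving $\dim H^2(\mathfrak{sp}(H'),\mathbb{C})\le1$, hence equality and $\tau^*$ an isomorphism. I expect this upper bound---equivalently, the assertion that the metaplectic extension \eqref{eq:mp} is the essentially unique central extension of $\mathfrak{sp}(H')$---to be the main obstacle, because the weight spaces are infinite-dimensional and the reduction must be run with continuous graded cochains.

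For $\mathfrak{sp}(H')\ltimes H'$ I would run the Hochschild--Serre spectral sequence of the abelian ideal $H'$, with $E_2^{p,q}=H^p(\mathfrak{sp}(H'),\Lambda^q(H')^*)$. The term $E_2^{2,0}=H^2(\mathfrak{sp}(H'),\mathbb{C})=\mathbb{C}\overline{\alpha}$ supplies $\alpha$; the invariants $E_2^{0,2}=(\Lambda^2(H')^*)^{\mathfrak{sp}(H')}$ are one-dimensional, spanned by $\langle\,,\,\rangle$, and supply $\beta$; and $E_2^{0,1}=(H'^*)^{\mathfrak{sp}(H')}=0$ together with $E_2^{1,1}=H^1(\mathfrak{sp}(H'),H')=0$ (both by the grading reduction, $H'$ being the standard module) kills all crossed contributions. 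Since $\alpha$ and $\beta$ are honest cocycles on $\mathfrak{sp}(H')\ltimes H'$, the relevant differentials out of $E^{0,2}$ must vanish, and one obtains $H^2(\mathfrak{sp}(H')\ltimes H',\mathbb{C})=\mathbb{C}\overline{\alpha}\oplus\mathbb{C}\overline{\beta}$.

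It remains to identify $\tau^*$ and $\sigma^*$, which I would do by direct substitution. Pulling $\alpha,\beta$ back along \eqref{eq:tau}, \eqref{eq:sigma} and computing residues, $\sigma^*\alpha=\alpha$ because $\alpha$ sees only the $\mathfrak{sp}$-component $\tau(L_p)$ of $\sigma(L_p)$; the extra $H'$-components $-\tfrac{p+1}{2}b_p$ of $\sigma(L_p)$ are exactly what feed $\beta$ back into the Witt and mixed directions, yielding the stated value $\sigma^*\beta=\tfrac12\alpha+\psi$. Injectivity of $\sigma^*$ then follows since $\alpha$ and $\tfrac12\alpha+\psi$ are linearly independent inside the three-dimensional $H^2(\mathrm{Witt}\ltimes H',\mathbb{C})=\mathbb{C}\overline{\alpha}\oplus\mathbb{C}\overline{\beta}\oplus\mathbb{C}\overline{\gamma}$.
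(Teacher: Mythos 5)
Your overall architecture is sound, and two of its four pieces coincide with the paper's proof: the $H^1$ vanishings are obtained there exactly as you propose (perfectness of $S^2$ of the dense polynomial part, transported by density to $\mathfrak{sp}\left(H'\right)$ and $\mathfrak{sp}\left(H'\right)\ltimes H'$), and the upper bound for $H^2\left(\mathfrak{sp}\left(H'\right),\mathbb{C}\right)$ is carried out in the paper by the same graded/direct cocycle analysis ``as in \cite[2.1]{adkp}'' that you describe. Where you genuinely depart from the paper is in the other two pieces. For \eqref{eq:H2spH} the paper does not use Hochschild--Serre: it shows directly that any two-cocycle is cohomologous to $A\,\alpha+B\,\beta+C\,\gamma$ and then kills $C$ by evaluating the cocycle identity on the single triple $x=b_1b_1$, $y=b_1b_{-2}$, $z=b_{-1}$, for which $[b_1b_{-2},b_{-1}]=b_{-2}$, $[b_{-1},b_1b_1]=-2b_1$, $[b_1b_1,b_1b_{-2}]=0$ force $C\,\gamma\left(b_1b_1,b_{-2}\right)=2C=0$. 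For the identification of $\tau^*$ and $\sigma^*$ the paper avoids residue computations altogether: it uses the lifts $\widehat{\tau}\colon\mathrm{Vir}\hookrightarrow\mathfrak{mp}\left(H'\right)$ and $\widehat{\sigma}\colon\mathfrak{D}\hookrightarrow\widetilde{\mathscr{U}}_{2}(H)$ from \eqref{eq:tauhat} and \eqref{eq:sigmahat}; since a lift of a map of base Lie algebras forces the pulled-back defining cocycle to be cohomologous to the source's defining cocycle, one reads off $\tau^*\left(-\tfrac12\alpha\right)=-\tfrac12\alpha$ and $\sigma^*\left(-\tfrac12\alpha+\beta\right)=\psi$ at once, which is exactly the stated formula for $\sigma^*$.

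The genuine gap in your plan is the assertion $E_2^{1,1}=H^1\left(\mathfrak{sp}\left(H'\right),(H')^*\right)=0$, justified only by ``the grading reduction, $H'$ being the standard module.'' This is not a routine Whitehead-type vanishing: the analogous space for the Witt algebra is \emph{nonzero} --- that is precisely where the class $\overline{\gamma}$ of $H^2\left(\mathrm{Witt}\ltimes H',\mathbb{C}\right)$ comes from --- so the vanishing is a special feature of the symplectic algebra and is exactly the content that makes $H^2$ drop from three dimensions to two. In other words, you have relocated the crux of \eqref{eq:H2spH} into an unproved $E_2$ computation; the concrete fact underlying it is the paper's observation that the $\gamma$-type cochain $X\mapsto\psi(X,\cdot)$ fails the cocycle identity on $\mathfrak{sp}\left(H'\right)$ (the triple above), plus a graded argument showing no other degree-zero $1$-cocycles survive. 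Your plan would be complete once you supply that computation; as written, it assumes it. A second, smaller soft spot: the claim that ``direct substitution and residues'' yields $\sigma^*\beta=\tfrac12\alpha+\psi$ is only an assertion, and carrying it out is delicate --- one must compare cocycles \emph{modulo coboundaries} (coboundaries vanish on pairs $(L_p,b_{-p})$ since $[L_p,b_{-p}]=0$ in $H'$), and one must fix the convention for how $\mathrm{Witt}$ acts on $H$ (the treatment of $b_0$) consistently on both sides; the paper's route through $\widehat{\sigma}$ and the defining cocycle $-\tfrac12\alpha+\beta$ of $\widetilde{\mathscr{U}}_{2}(H)$ in \eqref{eq:2cocycledefUtildeleq2} is what makes this step rigorous with no computation, and I would recommend adopting it.
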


\begin{remark}
\label{rmk:metaplectivealpha}
The two-cocycle defining the central extension $\mathfrak{mp}\left(H' \right)$
of $\mathfrak{sp}\left(H' \right)$ in \eqref{eq:cocyclemeta} is 
$-\frac{1}{2}\,\alpha$. 
Also, the two-cocycle defining the central extension
$\widetilde{\mathscr{U}}_{2}(H)$ of $\mathfrak{sp} \left( H' \right)\ltimes H'$ in \eqref{eq:2cocycledefUtildeleq2} is
$-\frac{1}{2}\, \alpha + \beta.$ 
\end{remark}

\begin{proof}
Let $H'_f$ be the functor which assigns to a $\mathbb{C}$-algebra $R$ the $R$-subalgebra $H'_f(R):= R[t, t^{-1}]\subset H'(R)$.
One checks that
\begin{align*}
S^2\left( H'_f \right) &= \left[S^2\left( H'_f \right),  \, S^2\left( H'_f \right)\right] \quad \mbox{and}\\
 S^2\left( H'_f \right) \ltimes H'_f &= \left[S^2\left( H'_f \right) \ltimes H'_f,  \, S^2\left( H'_f \right) \ltimes H'_f\right].
\end{align*}
Since $H'_f$ is dense in $H'$ and $S^2\left( H'\right)$ is dense in $\mathfrak{sp} \left( H' \right)$, we deduce
\begin{align*}
\mathfrak{sp} \left( H' \right) &= \left[\mathfrak{sp} \left( H' \right), \, \mathfrak{sp} \left( H' \right)\right] \quad \mbox{and}\\
\mathfrak{sp} \left( H' \right) \ltimes H' &= \left[\mathfrak{sp} \left( H' \right) \ltimes H', \, \mathfrak{sp} \left( H' \right) \ltimes H'\right]
\end{align*}
hence \eqref{eq:H1_1} and \eqref{eq:H1_2} follow.

Next we prove \eqref{eq:H2spH}. Assume $\delta$ is an arbitrary two-cocycle on $\mathfrak{sp} \left( H' \right)\ltimes H' $. 
In particular, $\delta$ satisfies the two-cocycle relation 
\begin{equation}
\label{eq:2cocyclerel}
\delta(x, [y,z])+\delta(y, [z,x])+\delta(z, [x,y])=0
\end{equation}
for all $x,y,z\in \mathfrak{sp} \left( H' \right)\ltimes H'$.
The pull-back of $\delta$ via $\sigma^*$ is a two-cocycle on $\mathrm{Witt}\ltimes H'$. 
Recall the relations
\begin{equation}
\label{eq:relspltimesH'}
[L_p, L_q] = (p-q)\, L_{p+q}, \qquad [L_p, b_q]=-q\, b_{p+q}, \qquad [b_p, b_q]=0
\end{equation}
on $\mathrm{Witt}\ltimes H' $, where $L_p=-t^{p+1}\partial_t$ and $b_q=t^q$ for $p,q\in\mathbb{Z}$.
Considering terms of type
\[
\delta(L_p, [L_q, L_r]), \qquad \delta(L_p, [L_q, b_r]),\qquad \delta(L_p, [b_q, b_r])
\]
and imposing the two-cocycle relation \eqref{eq:2cocyclerel}
for all $x,y,z\in \mathrm{Witt}\ltimes H' $ and the relations \eqref{eq:relspltimesH'}, a direct computation as in the proof of \cite[2.1]{adkp} shows that 
\begin{align*}
\delta \left( X +f,Y+g \right) \equiv \,\, &  A \, \alpha \left( X +f,Y+g \right) 
+ B \, \beta \left( X +f,Y+g \right) \\
&+ C \, \gamma \left( X +f,Y+g \right)
\end{align*}
modulo coboundaries, 
for some $A, B, C\in\mathbb{C}$. Next, a further constraint is imposed by elements of $\mathfrak{sp} \left( H' \right)\ltimes H'$ not in $\mathrm{Witt}\ltimes H'$.
Consider the two-cocycle relation for 
\[
x=b_1b_1,\qquad  y=b_1b_{-2}, \qquad \mbox{and}\qquad z=b_{-1}.
\]
Observe that 
\[
[b_1b_{-2}, b_{-1}]=b_{-2}, \quad [b_{-1}, b_1b_1]=-2b_1, \quad [b_1b_1, b_1b_{-2}]=0.
\]
Since
\[
\gamma\left( b_1b_1, b_{-2}\right) = 2 \qquad \mbox{and} \qquad \gamma\left( b_1b_{-2}, b_{1}\right) = 0,
\]
it follows that the only non-trivial contribution to the two-cocycle relation is \sloppy\mbox{$C\, \gamma \left( b_1b_1, [b_1b_{-2}, b_{-1}]\right)=0$,} hence $C=0$. A direct analysis shows that the two-cocycle relations do not impose any  relations on $A$ and $B$, hence \eqref{eq:H2spH}.

The identification in \eqref{eq:H2sp} follows similarly from the argument for \eqref{eq:H2spH}.
 
For the final part of the statement, recall the map $\widehat{\sigma}\colon \mathfrak{D} \hookrightarrow \widetilde{\mathscr{U}}_{2}(H)$ from \eqref{eq:sigmahat}. 
Since the two-cocycle on $\mathrm{Witt}\ltimes H'$ defining $\mathfrak{D}$ is $\psi$, and the two-cocycle on $\mathfrak{sp} \left( H' \right)\ltimes H'$ defining $\widetilde{\mathscr{U}}_{2}(H)$ is $-\frac{1}{2}\, \alpha + \beta$, we deduce $\sigma^*\left( -\frac{1}{2}\, \alpha + \beta \right)=\psi$.
A similar argument involving the map $\widehat{\tau}$ from \eqref{eq:tauhat} shows that $\sigma^*(\alpha)=\tau^*(\alpha)=\alpha$, hence the statement.
\end{proof}

For a topological Lie algebra $\mathfrak{g}$, the space $H^2(\mathfrak{g}, \mathbb{C})$ classifies Lie algebra continuous central extensions of $\mathfrak{g}$ up to isomorphism. 
Thus as a consequence of the previous statement, we have:

\begin{proposition}
\label{prop:splittings}
Let $Z$ be a Lagrangian subspace of $H'$ with $Z\cap H'_+=0$.
Every Lie algebra continuous central extension
\[
0 \rightarrow \mathfrak{gl}_1  \rightarrow  \widehat{\mathfrak{sp}\left(H' \right)} \rightarrow \mathfrak{sp}\left(H' \right) \rightarrow 0
\]
---e.g., $\mathfrak{mp}\left(H' \right)$---splits over $\mathfrak{sp}_F\left(H' \right)$ for all $F\subset Z$. Similarly, every Lie algebra continuous central extension
\[
0 \rightarrow \mathfrak{gl}_1  \rightarrow \widehat{\mathfrak{sp}\left(H' \right) \ltimes H'} \rightarrow \mathfrak{sp}\left(H' \right) \ltimes H'  \rightarrow 0
\]
---e.g., $\widetilde{\mathscr{U}}_{2}(H)$---splits over $\mathfrak{sp}_F\left(H' \right) \ltimes F $ for all $F\subset Z$.
\end{proposition}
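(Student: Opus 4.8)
The plan is to reduce the statement to a vanishing of cohomology classes and then to verify that vanishing by a change-of-polarization argument that plays the role of the residue theorem in the curve case.

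First I would invoke the classification recalled just above: a continuous central extension of a topological Lie algebra $\mathfrak{g}$ by $\mathfrak{gl}_1$ is determined by its class in $H^2(\mathfrak{g},\mathbb{C})$, and its restriction to a subalgebra $\mathfrak{h}\subseteq\mathfrak{g}$ splits if and only if the restricted class vanishes in $H^2(\mathfrak{h},\mathbb{C})$. By Proposition \ref{prop:cohLiespH} we have $H^2(\mathfrak{sp}(H'),\mathbb{C})=\mathbb{C}\,\overline{\alpha}$ and $H^2(\mathfrak{sp}(H')\ltimes H',\mathbb{C})=\mathbb{C}\,\overline{\alpha}\oplus\mathbb{C}\,\overline{\beta}$, so the class of an arbitrary extension is a linear combination of $\overline{\alpha}$ and $\overline{\beta}$. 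Hence it suffices to prove that the restriction of $\alpha$ to $\mathfrak{sp}_F(H')$ is a coboundary, and that the restrictions of $\alpha$ and $\beta$ to $\mathfrak{sp}_F(H')\ltimes F$ are coboundaries.

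The cocycle $\beta$ is immediate: by definition $\beta(X+f,Y+g)=\langle f,g\rangle$ depends only on $f,g\in F$, and $F$ is isotropic for \eqref{eq:symplectic} (indeed $F\subseteq F^{\perp}$), so $\beta$ vanishes identically on $\mathfrak{sp}_F(H')\ltimes F$. Since $\alpha(X+f,Y+g)=\psi(X,Y)$ involves only the symplectic components, the semidirect-product case for $\alpha$ reduces to the claim over $\mathfrak{sp}_F(H')$, and everything comes down to showing that $\overline{\alpha}$ restricts to $0$ on $\mathfrak{sp}_F(H')$.

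For this last point, the key geometric input is that $\mathfrak{sp}_F(H')$ stabilizes the Lagrangian $Z$: since $F\subseteq Z$ is isotropic we have $Z=Z^{\perp}\subseteq F^{\perp}$, whence $X(Z)\subseteq X(F^{\perp})\subseteq F\subseteq Z$ for every $X\in\mathfrak{sp}_F(H')$; in fact $X$ maps $F^{\perp}$ into $F$ and so induces $0$ on the finite-dimensional quotient $F^{\perp}/F$. I would represent $\overline{\alpha}=[\psi]$ not by the cocycle $\psi$ attached to the polarization $H'_+$ of \eqref{eq:psi}, but by the cohomologous cocycle $\psi_Z$ attached to the transverse decomposition $H'=Z\oplus H'_+$ coming from \eqref{eq:cond1}, i.e. with $\pi_+$ replaced by the projection $\pi_Z$ onto $Z$ along $H'_+$ and $\pi_-$ by the complementary projection onto $H'_+$. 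A direct computation then shows $\psi_Z(X,Y)=0$ for $X,Y\in\mathfrak{sp}_F(H')$: reading the defining trace from right to left, the rightmost factor $\pi_Z$ has image in $Z$, the operator maps $Z$ into $Z$, and the complementary projection onto $H'_+$ annihilates $Z$, so the off-diagonal blocks that produce the anomaly vanish. This is the abstract form of the residue theorem that trivializes the Virasoro cocycle on $\mathrm{Vect}(C\setminus P)$ in the curve case, with the isotropy of $F$ and the stabilization of $Z$ standing in for the vanishing of the global residue.

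The step I expect to be the main obstacle is the identification $[\psi]=[\psi_Z]$ in $H^2(\mathfrak{sp}(H'),\mathbb{C})$---equivalently, that passing from the polarization $H'_+$ to the positive Lagrangian $Z$ changes $\psi$ only by a coboundary on $\mathfrak{sp}_F(H')$. Because $Z$ and $H'_+$ are generally far apart (their defining symmetric map in $\widetilde{S}^2(H'_+)$ need not be of trace class), the naive primitive $X\mapsto\mathrm{Tr}\big((\pi_+-\pi_Z)X\big)$ does not converge on all of $\mathfrak{sp}(H')$. I would circumvent this by working only over $\mathfrak{sp}_F(H')$, where the condition $X(F^{\perp})\subseteq F$ together with the finite-dimensionality of $F^{\perp}/F$ forces the relevant products to be of trace class, so that a regularized trace adapted to the flag $F\subset F^{\perp}\subset H'$ provides an explicit primitive $\mu\colon\mathfrak{sp}_F(H')\to\mathbb{C}$ with $d\mu=\alpha|_{\mathfrak{sp}_F(H')}$. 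Conceptually this $\mu$ is the character by which $\mathfrak{sp}_F(H')$ acts on the vacuum line of the Fock representation determined by the positive Lagrangian $Z$ (positivity being guaranteed by \eqref{eq:cond4}), which is exactly the splitting realized inside $\mathfrak{mp}(H')$; verifying the convergence and that this character does split the cocycle is the crux of the argument.
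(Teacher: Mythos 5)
Your reduction to the vanishing of the restricted classes $\overline{\alpha}$ and $\overline{\beta}$, your treatment of $\beta$ via the isotropy of $F$, and your computation that the $Z$-adapted cocycle $\psi_Z$ vanishes identically on $\mathfrak{sp}_F\left(H'\right)$ (because $X(F^\perp)\subseteq F$ forces $X(Z)\subseteq Z$, so $\pi'_+\,Y\,\pi_Z=0$) are all correct, and the last step is genuinely different from the paper's. The paper never compares cocycles attached to two polarizations: it first runs your vanishing computation in the special case $Z=H_-$, where $\psi_Z$ \emph{is} $\psi$, so the original cocycle itself vanishes on $\mathfrak{sp}_{\overline{F}}\left(H'\right)$ for $\overline{F}\subset H_-$; it then handles a general pair $(Z,F)$ by choosing $\rho\in\mathcal{S}p^+\left(H'\right)$ with $(Z,F)=\rho\left(H_-,\overline{F}\right)$ (transitivity, \S\ref{sec:Sp+}) and using that the adjoint action of $\mathcal{M}p\left(H'\right)$ on $\mathfrak{mp}\left(H'\right)$ factors through $\mathcal{S}p\left(H'\right)$, so that $\mathrm{Ad}\,\rho$ carries the trivial extension $\mathfrak{mp}_{\overline{F}}\left(H'\right)$ isomorphically onto $\mathfrak{mp}_F\left(H'\right)$. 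In other words, the paper transports extensions by a group element and thereby sidesteps exactly the question you single out as the crux.

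That crux is where your proposal has a genuine gap: the identity $\left[\psi|_{\mathfrak{sp}_F}\right]=\left[\psi_Z|_{\mathfrak{sp}_F}\right]$ is left unproven, and the tools you sketch for it are flawed. First, the primitive as written, $X\mapsto\mathrm{Tr}\left(\left(\pi_+-\pi_Z\right)X\right)$, diverges for essentially every $X$, since $\pi_+-\pi_Z$ contains the infinite-rank diagonal part $\pi_+-\pi_-$; the correct comparison is between the two projections with image $H'_+$, namely
\[
\mu(X):=\mathrm{Tr}\left(\left(\pi_+-\pi'_+\right)X\right)=\mathrm{Tr}\left(\varphi\,\pi_-X\,\pi_+\right),
\]
where $\pi'_+$ is the projection onto $H'_+$ along $Z$ and $\varphi\colon H_-\rightarrow H'_+$ is the symmetric map whose graph is $Z$, so that $\pi_+-\pi'_+=\varphi\,\pi_-$. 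Second, your diagnosis of the difficulty is misplaced: no trace-class hypothesis on $\varphi$ and no regularized trace along the flag $F\subset F^\perp$ is needed, because for \emph{any} continuous $X$ the block $\pi_-X\,\pi_+$ has finite rank ($t$-adic continuity gives $X\left(t^NH_+\cap H'\right)\subseteq H'_+$ for $N\gg0$), so $\mu$ is defined on all of $\mathfrak{sp}\left(H'\right)$; the finiteness here comes from the $t$-adic topology, not from Hilbert-space commensurability of the polarizations. With this corrected $\mu$, a block-matrix computation relative to $H'=H_-\oplus H'_+$, using cyclicity of the trace (legitimate because every product involved contains a finite-rank factor $\pi_-X\pi_+$ or $\pi_-Y\pi_+$), yields $\psi-\psi_Z=d\mu$, hence $\psi|_{\mathfrak{sp}_F\left(H'\right)}$ is a coboundary as desired. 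So your route can be completed, and by an elementary argument rather than the Fock-vacuum character you appeal to (which has its own unaddressed convergence issues); but as submitted, the decisive identity is missing and the specific primitive proposed for it would fail.
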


\begin{proof}
One needs to show that the restriction of $H^2\left( \mathfrak{sp}\left(H' \right), \mathbb{C}\right)$ to $\mathfrak{sp}_F\left(H' \right)$   (respectively, the restriction of $H^2\left( \mathfrak{sp}\left(H' \right)\ltimes H', \mathbb{C}\right)$ to $\mathfrak{sp}_F\left(H' \right)\ltimes F$) vanishes for all $F\subset Z$.
From Proposition \ref{prop:cohLiespH}, it is enough to consider the class of the two-cocycle $\alpha$ (resp., the classes of the two-cocycles $\alpha$ and $\beta$).
Also, after rescaling to $-\frac{1}{2}\alpha$ as in Remark \ref{rmk:metaplectivealpha}, we can reduce to the case $\widehat{\mathfrak{sp}\left(H' \right)}=\mathfrak{mp}\left(H' \right)$.

For $Z=H_-$ and $F\subset Z$, one has $F \subset H_-\subset F^\perp$, thus for $X\in \mathfrak{sp}_F\left(H'\right)$, one has
\[
\mathrm{Im}\left( \pi_+ \,X\, \pi_-\right) =  \pi_+ \,X(H_-) \subset \pi_+ \,X(F^\perp)\subseteq \pi_+ \, F =0.
\]
It follows that for  $F\subset H_-$, one has $\alpha(X,Y)=0$ for all $X,Y$ in $\mathfrak{sp}_F\left(H' \right)$. 
Next, consider an arbitrary Lagrangian subspace $Z$ of $H'$ with $Z\cap H'_+=0$ and a subspace $F\subset Z$.
From \S\ref{sec:Sp+}, the group scheme $\mathcal{S}p^+\left(H'\right)$ acts 
transitively on the space of such pairs $(Z,F)$, hence there exists $\rho\in \mathcal{S}p^+\left(H'\right)$ such that 
$(Z,F)=\rho(H_-, \overline{F})$ for some  $\overline{F}\subset H_-$.  
Let ${\mathfrak{mp}_F\left(H' \right)}$ be the restriction of ${\mathfrak{mp}\left(H'\right)}$ over ${\mathfrak{sp}_F\left(H'\right)}$ and $\mathfrak{mp}_{\overline{F}}\left(H' \right)$ the restriction over ${\mathfrak{sp}_{\overline{F}}\left(H' \right)}$.
The adjoint action of the group ind-scheme $\mathcal{M}p\left(H'\right)$ on $\mathfrak{mp}\left(H' \right)$ factors through $\mathcal{S}p\left(H'\right)$, hence 
\[
\mathrm{Ad}\, \rho\left(\mathfrak{sp}_{\overline{F}}\left(H' \right)\right)= \mathfrak{sp}_{F}\left(H' \right) \qquad\mbox{and}\qquad
\mathrm{Ad}\, \rho\left(\mathfrak{mp}_{\overline{F}}\left(H' \right)\right)= \mathfrak{mp}_{F}\left(H' \right).
\]
Since we have shown that $\mathfrak{mp}_{\overline{F}}\left(H' \right)$ is a trivial extension, it follows that $\mathfrak{mp}_F\left(H' \right)$ is a trivial extension, hence $\alpha(X,Y)=0$ for all $X,Y$ in $\mathfrak{sp}_F\left(H' \right)$. 

The case of $\alpha$ on $\mathfrak{sp}\left(H' \right) \ltimes H' $ follows  from $\alpha$ on $\mathfrak{sp}\left(H' \right)$, and the case of $\beta$ follows immediately from $F$ being isotropic with respect to \eqref{eq:symplectic}.
\end{proof}


\section{The isomorphism of second cohomology spaces}
\label{sec:isoH2}

Consider a smooth variety $X$ over $\mathbb{C}$ carrying a transitive action of a Lie algebra $\mathfrak{g}$ (as defined in \S\ref{sec:spunif}).
For $x\in X$, let $\mathfrak{g}_x:= \mathrm{Ker}(\mathfrak{g}\rightarrow T_x(X))$.
It is shown in \cite[4.1]{adkp} that if $\mathfrak{g}_x=\overline{[\mathfrak{g}_x, \mathfrak{g}_x]}$ for all $x\in X$, then every Lie algebra continuous central extension
\[
0 \rightarrow \mathbb{C}  \rightarrow  \widehat{\mathfrak{g}} \rightarrow \mathfrak{g} \rightarrow 0
\]
which splits over $\mathfrak{g}_x$ for all $x\in X$ yields a continuous extension
\[
0 \rightarrow X\times \mathbb{C}  \rightarrow  E \rightarrow T(X) \rightarrow 0.
\]
This defines  a canonical map
\begin{equation}
\label{eq:homH2}
H^2( \mathfrak{g}, \mathbb{C})_0\rightarrow \mathrm{Ext}^1\left(\mathscr{T}_X, \mathscr{O}_X \right) = H^1\left( \Omega^1_X \right)
\end{equation}
where $H^2( \mathfrak{g}, \mathbb{C})_0\subset H^2( \mathfrak{g}, \mathbb{C})$ is the subspace obtained by intersecting the kernels of the restriction maps $H^2( \mathfrak{g}, \mathbb{C})\rightarrow H^2( \mathfrak{g}_x, \mathbb{C})$ for all $x\in X$.
Moreover, one has a canonical homomorphism
\[
c\colon H^1\left( \mathscr{O}^*_{X} \right) \rightarrow H^1\left( \Omega^1_X \right)
\]
mapping the class of each line bundle $L$ to the extension class given by the sheaf of differential operators of order less than or equal to $1$ acting on $L$.

Combining Theorems \ref{thm:unif1} and \ref{thm:unif2} with Proposition \ref{prop:splittings}, the above result from \cite{adkp} applies to give  canonical maps
\begin{align*}
\nu\colon H^2\left(\mathfrak{sp} \left( H' \right), \mathbb{C}\right) &\rightarrow H^1\left(\Omega^1_{\widehat{\mathcal{A}}_g}\right),\\
\mu\colon H^2\left(\mathfrak{sp} \left( H' \right)\ltimes H' , \mathbb{C} \right) &\rightarrow H^1\left(\Omega^1_{\widehat{\mathcal{X}}_g}\right).
\end{align*}
We determine these maps explicitly in terms of the basis elements of the $H^2$ spaces given in Proposition \ref{prop:cohLiespH} and the line bundle classes from \S\ref{sex:lambdatheta}:

\begin{theorem}
\label{thm:canisoH2spAX}
One has
\begin{align}
\label{eq:lanu}
&\lambda = - \nu \left[ \alpha\right], & \mbox{for $g\geq 3$,}\\
\label{eq:lathetamu}
&\lambda = - \mu \left[ \alpha\right] , \qquad 
\xi = \mu\left[\alpha\right]-2\mu\left[\beta\right], & \mbox{for $g\geq 5$}.
\end{align}
In particular, the image of $\mu$ and $\nu$ equal the span of the image of the corresponding map $c$.
\end{theorem}

\begin{proof}
One has a commutative diagram
\begin{equation}
\label{eq:H2Wittspsquare}
\begin{tikzcd}
H^2\left(\mathfrak{sp} \left( H' \right), \mathbb{C}\right) \arrow{r}{\nu}\arrow{d}[above, rotate=-90]{=}[swap]{\tau^*}& H^1\left(\Omega^1_{\widehat{\mathcal{A}}_g}\right) \arrow{d} & H^1\left(\mathscr{O}^*_{\widehat{\mathcal{A}}_g}\right) \arrow{d}[above, rotate=-90]{=} \arrow{l}[swap]{c}\\
H^2\left(\mathrm{Witt}, \mathbb{C}\right) \arrow{r}{\overline{\nu}}& H^1\left(\Omega^1_{\widehat{\mathcal{M}}_g}\right) & H^1\left(\mathscr{O}^*_{\widehat{\mathcal{M}}_g}\right) \arrow{l}
\end{tikzcd}
\end{equation}
where the map $\tau^*$ is the identification studied in Proposition \ref{prop:cohLiespH}, the map $\overline{\nu}$ follows from \eqref{eq:homH2},
and the two vertical maps on the right-hand side are induced by the pull-back via the extended Torelli map as in Proposition \ref{prop:H2AXMP}.
From \cite[4.10.iv]{adkp}, one has $\lambda = - \overline{\nu} \left[ \alpha\right]$. Hence, \eqref{eq:lanu} follows.

Furthermore, one has a commutative diagram
\begin{equation}
\label{eq:H2WittHspHsquare}
\begin{tikzcd}
H^2\left(\mathfrak{sp} \left( H' \right)\ltimes H' , \mathbb{C}\right) \arrow{r}{\mu}\arrow[hookrightarrow]{d}{\sigma^*}& H^1\left(\Omega^1_{\widehat{\mathcal{X}}_g}\right) \arrow{d} & H^1\left(\mathscr{O}^*_{\widehat{\mathcal{A}}_g}\right)\arrow[hookrightarrow]{d}{} \arrow{l}[swap]{c}\\
H^2\left(\mathrm{Witt}\ltimes H' , \mathbb{C}\right) \arrow{r}{\overline{\mu}}& H^1\left(\Omega^1_{\widehat{\mathcal{P}}_{g-1}}\right) & H^1\left(\mathscr{O}^*_{\widehat{\mathcal{M}}_g}\right) \arrow{l}
\end{tikzcd}
\end{equation}
where the map $\sigma^*$ is the injection studied in Proposition \ref{prop:cohLiespH}, the map $\overline{\mu}$ follows from \eqref{eq:homH2},
and the two vertical maps on the right-hand side are induced by the pull-back via the extended Torelli map as in Proposition \ref{prop:H2AXMP}.
From \cite[4.10]{adkp}, one has $\lambda = - \overline{\mu} \left[ \alpha\right]$ and $\xi = -2 \overline{\mu} \left[ \psi\right]$.
Applying the description of $\sigma^*$ from Proposition \ref{prop:cohLiespH}, \eqref{eq:lathetamu} follows.
\end{proof}

Consequently, we obtain:

\begin{proof}[Proof of Theorem \ref{thm:canisointro}] 
The argument is similar to \cite[pg.~30]{adkp}.
Consider first the map $\nu$. Since $\widehat{\mathcal{A}}_g$ is not complete, we cannot apply Hodge theory to obtain a natural inclusion $H^1\left(\Omega^1_{\widehat{\mathcal{A}}_g}\right)\rightarrow H^2\left( \widehat{\mathcal{A}}_g, \mathbb{C}\right)$, and it is unclear whether  such an inclusion exists.
Instead, one proceeds as follows.
From Theorem \ref{thm:canisoH2spAX}, the image of $\nu$ equals the span of the image of $c$. Let 
\[
c_1\colon H^1\left(\mathscr{O}^*_{\widehat{\mathcal{A}}_g}\right) \rightarrow H^2\left( \widehat{\mathcal{A}}_g, \mathbb{C}\right)
\]
be the first Chern class map. For the class of a line bundle $L$, its image via $c_1$ is the de Rham class of $c[L]$.
From Proposition \ref{prop:H2AXMP}, the space $H^2\left( \widehat{\mathcal{A}}_g, \mathbb{C}\right)$ is generated by $\lambda$.
Thus mapping the image of $\nu$ to its de Rham class yields a canonical isomorphism (still denoted $\nu$)
\[
\nu \colon H^2\left(\mathfrak{sp} \left( H' \right), \mathbb{C}\right) \rightarrow H^2\left( \widehat{\mathcal{A}}_g, \mathbb{C}\right).
\]
Since one has $H^2\left( \widehat{\mathcal{A}}_g, \mathbb{C}\right) \cong H^2\left( {\mathcal{A}}_g, \mathbb{C}\right)$ from Proposition \ref{prop:H2AXMP}, the statement follows.
The statement about $\mu$ follows similarly from Theorem \ref{thm:canisoH2spAX} and Proposition \ref{prop:H2AXMP}.
\end{proof}

\begin{proof}[Proof of Theorem \ref{thm:mpaction}]
The statement follows from \eqref{eq:lanu}, \eqref{eq:lathetamu}, and Remark \ref{rmk:metaplectivealpha}.
\end{proof}

The spaces and line bundles appearing in Theorem \ref{thm:mpaction} are summarized by the commutative diagram:
\begin{equation}
\label{eq:bigsquare}
\begin{tikzcd}
&\Lambda \arrow{dd}  && \Xi \arrow{dd}  \\
\Lambda \arrow{dd} \arrow{ru} && \Xi  \arrow{ru}   \\
&\widehat{\mathcal{A}}_g && \widehat{\mathcal{X}}_{g}  \arrow{ll} \\
\widehat{\mathcal{M}}_g \arrow[hookrightarrow]{ru} && \widehat{\mathcal{P}}_{g-1} \arrow{ll}  \arrow[hookrightarrow]{ru}  \arrow[crossing over, leftarrow]{uu} 
\end{tikzcd}
\end{equation}
where the left and right squares are Cartesian. This diagram provides a geometric counterpoint to Figure \ref{fig:bigLiesquare}.
The conclusions of Theorem \ref{thm:mpaction} are summarized by  the following diagrams.
Let $({Z}, {F}, {L})\rightarrow S$ be a family  of extended abelian varieties over a smooth base $S$.
Recall that Proposition \ref{prop:splittings} gives the splitting
$\mathfrak{sp}_{F} \left(H' \right) \hookrightarrow \mathfrak{mp}\left(H' \right)$. 
Let $\mathscr{F}_{\Lambda}$ be the Atiyah algebra of the line bundle $\Lambda$ on $\widehat{\mathcal{A}}_g$, i.e., the sheaf of first-order differential operators acting on $\Lambda$. 
Theorem \ref{thm:mpaction} implies the following commutative diagram of sheaves of Lie algebras with exact rows and columns:
\begin{equation}
\label{eq:AtiyahLambdadiagram}
\begin{tikzcd}
&\mathscr{O}_S \arrow[hookrightarrow]{d} \arrow[rightarrow]{r}{\frac{1}{2}} & \mathscr{O}_S \arrow[hookrightarrow]{d}\\
\mathfrak{sp}_{F} \left( H' \left( \mathscr{O}_S\right)\right) \arrow[rightarrow]{d}[above, rotate=-90]{=} \arrow[hookrightarrow]{r} & \mathfrak{mp} \left( H' \left( \mathscr{O}_S\right)\right) \arrow[->>]{d} \arrow[->>]{r} & \mathscr{F}_{\Lambda|S} \arrow[->>]{d}\\
\mathfrak{sp}_{F} \left( H' \left( \mathscr{O}_S\right)\right) \arrow[hookrightarrow]{r} & \mathfrak{sp} \left( H' \left( \mathscr{O}_S\right)\right) \arrow[->>]{r} & \mathscr{T}_S.
\end{tikzcd}
\end{equation}

Similarly, consider a family $({Z}, {F}, {L}, \overline{h}, q)\rightarrow S$ over a smooth base $S$ as in \S\ref{sec:extunivPPAV}.
Proposition \ref{prop:splittings} gives the splitting $\mathfrak{sp}_{F} \left(H' \right) \ltimes F \hookrightarrow \widetilde{\mathscr{U}}_{2}(H)$.
Let  $\mathscr{F}_{\Xi}$ be the Atiyah algebra of the line bundle $\Xi$ on $\widehat{\mathcal{X}}_g$.
Theorem \ref{thm:mpaction} implies the following commutative diagram of sheaves of Lie algebras with exact rows and columns:
\begin{equation}
\label{eq:AtiyahThetadiagram}
\begin{tikzcd}
&\mathscr{O}_S \arrow[hookrightarrow]{d} \arrow[rightarrow]{r}{-\frac{1}{2}} & \mathscr{O}_S \arrow[hookrightarrow]{d}\\
\mathfrak{sp}_{F} \left( H' \left( \mathscr{O}_S\right)\right) \ltimes F\left( \mathscr{O}_S\right) \arrow[rightarrow]{d}[above, rotate=-90]{=} \arrow[hookrightarrow]{r} & \widetilde{\mathscr{U}}_{2}\left(H \left( \mathscr{O}_S\right)\right) \arrow[->>]{d} \arrow[->>]{r} & \mathscr{F}_{\Xi|S} \arrow[->>]{d}\\
\mathfrak{sp}_{F} \left( H' \left( \mathscr{O}_S\right)\right)\ltimes F\left( \mathscr{O}_S\right) \arrow[hookrightarrow]{r} & \mathfrak{sp} \left( H' \left( \mathscr{O}_S\right)\right) \ltimes H'\left( \mathscr{O}_S\right) \arrow[->>]{r} & \mathscr{T}_S.
\end{tikzcd}
\end{equation}


\section{Intermezzo on metaplectic representations}
\label{sec:intermezzo}

Here we define some properties of 
  representations of the metaplectic algebra which will be used in the following sections.

\begin{definition}
\label{def:Heisenberg--admissible}
\begin{enumerate}[(i)]
\item An \textit{admissible $\mathfrak{mp} \left( H' (\mathbb{C})\right)$ representation} $V$ is  a representation of $\mathfrak{mp} \left( H' (\mathbb{C})\right)$
such that:
\begin{enumerate}
\item the  action of $b_{-i}\,b_i$ on $V$  is diagonalizable with integral eigenvalues for $i\geq 1$, and 
\item  the action of $b_{i}\,b_j$ on $V$ is locally nilpotent for $i,j$ not both negative and $i+j\neq 0$.
\end{enumerate}
\item  An \textit{admissible $\widetilde{\mathscr{U}}_{2}(H(\mathbb{C}))$ representation} $V$ is  a representation of $\widetilde{\mathscr{U}}_{2}(H(\mathbb{C}))$ which induces an \textit{admissible $\mathfrak{mp} \left( H' (\mathbb{C})\right)$ representation} on $V$ and such that the action of $H'_+\subset \widetilde{\mathscr{U}}_{2}(H(\mathbb{C}))$ is locally nilpotent on~$V$.
\end{enumerate}
\end{definition}

We will also use the following induced characterization.
Consider that a representation $V$ of the Virasoro algebra  is said to be of central charge $c\in\mathbb{C}$ if the actions of the Virasoro operators on $V$ satisfy
\[
[L_p, L_q] = (p-q)\, L_{p+q} + \frac{c}{12}\, (p^3-p)\, \delta_{p+q,0}\, \mathrm{id}_V \qquad \mbox{for $p,q\in\mathbb{Z}$.}
\]
Recall the inclusion \eqref{eq:tauhat}.

\begin{definition}
\label{def:centralcharge}
A  representation $V$ of $\mathfrak{mp} \left( H' (\mathbb{C})\right)$ (respectively, $\widetilde{\mathscr{U}}_{2}(H(\mathbb{C}))$)  is said to be \textit{of central charge $c\in\mathbb{C}$} if 
the action of $\mathfrak{mp} \left( H' (\mathbb{C})\right)$ (resp., $\widetilde{\mathscr{U}}_{2}(H(\mathbb{C}))$) induces an action of $\mathrm{Vir}$ on $V$ of central charge $c$. 
\end{definition}

\subsection{Examples}
Examples of admissible $\mathfrak{mp} \left( H' (\mathbb{C})\right)$ representations 
are given by certain vertex operator algebras for which the action of the Virasoro algebra extends to an action of the metaplectic algebra. We refer to \cite{fhl, KacBeginners} for treatments of  vertex operator algebras. Briefly, a vertex operator algebra is a $\mathbb{Z}_{\geq 0}$-graded complex vector space $V$ together with a distinguished degree-$0$ element $\bm{1}$, a distinguished degree-$2$ element $\omega$, and a linear map $Y(\,, t)\colon V \rightarrow \mathrm{End}(V)\llbracket t, t^{-1}\rrbracket$, satisfying suitable conditions. The Fourier coefficients of $Y(\omega, t)$ realize an action of the Virasoro algebra on $V$. 

Via the inclusion \eqref{eq:tauhat}, the Virasoro element $L_p$ is realized in the metaplectic algebra as $\frac{1}{2}\, \sum_{i}:b_{-i}\,b_{i+p}:$.
The element $L_0$  acts on a vertex operator algebra $V$ as the grading operator, and the elements $L_p$ with $p>0$ have negative degree on $V$. 
 Consequently, the action of $\frac{1}{2}\, \sum_{i}:b_{-i}\,b_i:$ on $V$ is diagonalizable with integral eigenvalues and the action of $\frac{1}{2}\, \sum_{i}:b_{-i}\,b_{i+p}:$ on $V$ with $p>0$ is locally nilpotent. It follows that the conditions defining admissible $\mathfrak{mp} \left( H' (\mathbb{C})\right)$ representations in Definition \ref{def:Heisenberg--admissible} provide a 
strengthening of these properties.

For instance, we have:

\begin{lemma}
\label{lemma:exadm}
Heisenberg vertex algebras of arbitrary rank and even lattice vertex algebras are admissible $\widetilde{\mathscr{U}}_{2}(H(\mathbb{C}))$ representations.
\end{lemma}

\begin{proof}
Let $V$ be either the rank-one Heisenberg vertex algebra or a rank-one even lattice vertex algebra.
The degree of the operator $b_i$ on $V$ is $-i$, hence the degree of $b_i \, b_j$ on $V$  is $-(i+j)$. 
In particular, $H'_+$ and $\widetilde{S}^2(H'_+)$ act on $V$ by operators of negative degree, hence $H'_+$ and $\widetilde{S}^2(H'_+)$ act locally nilpotently on $V$. 

Next, consider $b_i\,b_j$ with $i>0$ and $j<0$. If $i+j>0$, then the degree of $b_i\,b_j$ on $V$ is still negative, hence $b_i\,b_j$ acts locally nilpotently on $V$.
If \mbox{$i+j<0$,} then the degree of $b_i\,b_j$ on $V$ is positive; however,  $b_i$ and $b_j$ commute, and the degree of $b_i$ is negative, hence $b_i\,b_j$ still acts locally nilpotently on $V$. 

Finally, it is easy to see that the  action of $b_{-i}\,b_i$ on $V$ for $i\geq 1$ is diagonalizable with integral eigenvalues. 

The case of arbitrary rank is similar.
\end{proof}


\section{Coinvariants on families of extended abelian varieties}
\label{sec:Vhat}

We define here spaces of coinvariants at extended abelian varieties and show how these yield twisted $\mathcal{D}$-modules on $\widehat{\mathcal{A}}_g$.
Using results from \S\ref{sec:isoH2}, we identify a multiple of the Atiyah algebra of the line bundle $\Lambda$ on $\widehat{\mathcal{A}}_g$ which acts on the sheaves of coinvariants and thus  determines their twisted $\mathcal{D}$-module structure. Similarly, we define and study twisted $\mathcal{D}$-modules of coinvariants on $\widehat{\mathcal{X}}_g$.

\subsection{Spaces of coinvariants at extended PPAVs}
Let $(Z, F, L)$ be an extended abelian variety over a $\mathbb{C}$-algebra $R$.
Consider a representation $V$ of $\mathfrak{mp} \left( H' (\mathbb{C})\right)$.
The action of $\mathfrak{mp} \left( H' (\mathbb{C})\right)$ on $V$ extends $R$-linearly to an action of $\mathfrak{mp} \left( H' (R)\right)$ on $V\otimes_{\mathbb{C}} R$. 
Composing with the splitting \sloppy \mbox{$\mathfrak{sp}_{F} \left(H'\right)\hookrightarrow \mathfrak{mp} \left( H' \right)$} from Proposition \ref{prop:splittings}, 
one has an action of $\mathfrak{sp}_{F} \left(H' (R)\right)$ on $V\otimes_{\mathbb{C}} R$. 
We define the \textit{space of coinvariants} of  $V$ at $(Z, F, L)$ as 
\[
\widehat{\mathbb{V}}(V)_{(Z, F, L)} := V\otimes_{\mathbb{C}} R \,/\, \mathfrak{sp}_{F} \left(H' (R)\right) \left( V \otimes_{\mathbb{C}} R \right).
\]

\subsection{Sheaves of coinvariants on $\widehat{\mathcal{A}}_g$}
The spaces of coinvariants  induce sheaves of coinvariants as follows.
Let $({Z}, {F}, {L})\rightarrow S$ be a family of extended abelian varieties over a smooth base $S$.
Consider a representation $V$ of $\mathfrak{mp} \left( H' (\mathbb{C})\right)$. The action of $\mathfrak{mp} \left( H' (\mathbb{C})\right)$ on $V$
extends $\mathscr{O}_S$-linearly to an action of the sheaf of Lie algebras $\mathfrak{mp} \left( H' (\mathscr{O}_S)\right)$ on the sheaf  $V\otimes_{\mathbb{C}} \mathscr{O}_S$. 
This action restricts to an action of the sheaf of Lie algebras $\mathfrak{sp}_{F} \left(H' (\mathscr{O}_S)\right)$  on $V\otimes_{\mathbb{C}} \mathscr{O}_S$ via the splitting $\mathfrak{sp}_{F} \left(H' (\mathscr{O}_S)\right)\hookrightarrow \mathfrak{mp} \left( H' (\mathscr{O}_S)\right)$ as in \eqref{eq:AtiyahLambdadiagram}. 
We define the \textit{sheaf of coinvariants} of $V$  on $({Z}, {F}, {L})\rightarrow S$ as the quasi-coherent sheaf of $\mathscr{O}_S$-modules
\[
\widehat{\mathbb{V}}(V)_{({Z}, {F}, {L})\rightarrow S} := V\otimes_{\mathbb{C}} \mathscr{O}_S \,\big/\, \mathfrak{sp}_{F} \left(H' (\mathscr{O}_S)\right)  \left(V \otimes_{\mathbb{C}} \mathscr{O}_S \right).
\]
This gives rise to a quasi-coherent sheaf $\widehat{\mathbb{V}}(V)$ on $\widehat{\mathcal{A}}_g$.

\begin{theorem}
\label{thm:AtiyahactingonVhatA}
For a representation $V$ of $\mathfrak{mp} \left( H' (\mathbb{C})\right)$ of central charge $c$, 
the sheaf $\widehat{\mathbb{V}}(V)$ on $\widehat{\mathcal{A}}_g$ carries an action of  the Atiyah algebra $\frac{c}{2}\,\mathscr{F}_\Lambda$. This action induces
a twisted $\mathcal{D}$-module structure on $\widehat{\mathbb{V}}(V)$.
\end{theorem}

\begin{proof}
The action of $\mathfrak{mp} \left( H' (\mathbb{C})\right)$  on $V$ 
and the action of $\mathfrak{mp} \left( H' \right)$ on $\widehat{\mathcal{A}}_g$ via the projection $\mathfrak{mp} \left( H' \right)\rightarrow \mathfrak{sp} \left( H' \right)$
induce an action of the sheaf of Lie algebras $\mathfrak{mp} \left( H' (\mathscr{O}_S)\right)$ on the sheaf $V\otimes_{\mathbb{C}} \mathscr{O}_S$.
Explicitly, the action of $\mathfrak{mp} \left( H' (\mathscr{O}_S)\right)$ on $V\otimes_{\mathbb{C}} \mathscr{O}_S$ is given by
\[
X\cdot(v\otimes f):=  (X\cdot v)\otimes f + v\otimes (X\cdot f)
\]
for local sections $X\in \mathfrak{mp} \left( H' (\mathscr{O}_S)\right)$ and $v\otimes f\in V\otimes_{\mathbb{C}} \mathscr{O}_S$.
While it is not $\mathscr{O}_S$-linear, this action extends the $\mathscr{O}_S$-linear action of $\mathfrak{sp}_{F} \left(H' (\mathscr{O}_S)\right)$  on $V\otimes_{\mathbb{C}} \mathscr{O}_S$ since $\mathfrak{sp}_{F} \left(H' \right)$ acts trivially on $\widehat{\mathcal{A}}_g$.
Applying the central row of \eqref{eq:AtiyahLambdadiagram}, the action of $\mathfrak{mp} \left( H' (\mathscr{O}_S)\right)$ factors to an action of the Atiyah algebra $\alpha\,\mathscr{F}_\Lambda$ on $\widehat{\mathbb{V}}(V)$, for some $\alpha\in\mathbb{C}$.
Since the central element $\bm{1}$ of $\mathfrak{mp} \left( H' \right)$ acts on $V$ as multiplication by the central charge $c$, the first row of \eqref{eq:AtiyahLambdadiagram} implies  $\alpha=\frac{c}{2}$, hence the statement.
\end{proof}

\subsection{Sheaves of coinvariants on $\widehat{\mathcal{X}}_g$}
Let $({Z}, {F}, {L}, \overline{h}, q)\rightarrow S$ be a family as in \S\ref{sec:extunivPPAV} over a smooth base $S$.
Consider a representation  $V$  of $\widetilde{\mathscr{U}}_{2}(H(\mathbb{C}))$. The action of $\widetilde{\mathscr{U}}_{2}(H(\mathbb{C}))$ on $V$
 extends $\mathscr{O}_S$-linearly to an action of the sheaf of Lie algebras $\widetilde{\mathscr{U}}_{2}(H(\mathscr{O}_S))$ on the sheaf $V\otimes_{\mathbb{C}} \mathscr{O}_S$.
Composing with the splitting 
\[
\mathfrak{sp}_{F} \left(H' (\mathscr{O}_S)\right)\ltimes F (\mathscr{O}_S) \hookrightarrow \widetilde{\mathscr{U}}_{2}(H(\mathscr{O}_S))
\]
as in \eqref{eq:AtiyahThetadiagram}, one has an action of $\mathfrak{sp}_{F} \left(H' (\mathscr{O}_S)\right)\ltimes F (\mathscr{O}_S)$ on $V \otimes_{\mathbb{C}} \mathscr{O}_S$. 
We define the \textit{sheaf of coinvariants} of $V$  on $({Z}, {F}, {L}, \overline{h}, q)\rightarrow S$ as the quasi-coherent sheaf of $\mathscr{O}_S$-modules
\[
\widehat{\mathbb{V}}(V)_{({Z}, {F}, {L}, \overline{h}, q)\rightarrow S} := V\otimes_{\mathbb{C}} \mathscr{O}_S \,\big/\, \left( \mathfrak{sp}_{F} \left(H' (\mathscr{O}_S)\right) \ltimes F \left(\mathscr{O}_S\right) \right) \left(V \otimes_{\mathbb{C}} \mathscr{O}_S \right).
\]
This gives rise to a quasi-coherent sheaf $\widehat{\mathbb{V}}(V)$ on $\widehat{\mathcal{X}}_g$.

\begin{theorem}
\label{thm:AtiyahactingonVhatX}
For a representation  $V$  of $\widetilde{\mathscr{U}}_{2}(H(\mathbb{C}))$ of central charge $c$, 
the sheaf $\widehat{\mathbb{V}}(V)$ on $\widehat{\mathcal{X}}_g$ carries an action of  the Atiyah algebra $-\frac{c}{2}\,\mathscr{F}_\Xi$. This action induces
a twisted $\mathcal{D}$-module structure on $\widehat{\mathbb{V}}(V)$.
\end{theorem}

\begin{proof}
The action of $\widetilde{\mathscr{U}}_{2}(H(\mathbb{C}))$ on $V$  and the action of $\widetilde{\mathscr{U}}_{2}(H)$ on $\widehat{\mathcal{X}}_g$
via the projection $\widetilde{\mathscr{U}}_{2}(H) \rightarrow \mathfrak{sp} \left(H' \right) \ltimes H' $
 induce an action of the sheaf of Lie algebras $\widetilde{\mathscr{U}}_{2}(H(\mathscr{O}_S))$ on the sheaf $V\otimes_{\mathbb{C}} \mathscr{O}_S$.
Explicitly, the action of $\widetilde{\mathscr{U}}_{2}(H(\mathscr{O}_S))$ on $V\otimes_{\mathbb{C}} \mathscr{O}_S$ is given by
\[
X\cdot(v\otimes f):=  (X\cdot v)\otimes f + v\otimes (X\cdot f)
\]
for local sections $X\in \widetilde{\mathscr{U}}_{2}(H(\mathscr{O}_S))$ and $v\otimes f\in V\otimes_{\mathbb{C}} \mathscr{O}_S$.
Applying the central row of \eqref{eq:AtiyahThetadiagram}, this action factors to an action of the Atiyah algebra $\alpha\,\mathscr{F}_\Xi$ on $\widehat{\mathbb{V}}(V)$, for some $\alpha\in\mathbb{C}$.
Since the central element $\bm{1}$ of $\widetilde{\mathscr{U}}_{2}(H)$ acts on $V$ as multiplication by $c$, the first row of \eqref{eq:AtiyahThetadiagram} implies  $\alpha=-\frac{c}{2}$, hence the statement.
\end{proof}


\section{Coinvariants on families of  abelian varieties}
\label{sec:sheafV}

Here we construct twisted $\mathcal{D}$-modules of coinvariants on $\mathcal{A}_g$ by descending the twisted $\mathcal{D}$-modules $\widehat{\mathbb{V}}(V)$ from \S\ref{sec:Vhat} along the projection $\widehat{\mathcal{A}}_g \rightarrow \mathcal{A}_g$. We proceed similarly on $\mathcal{X}_g$.

We use the following splittings.
It is immediate to see that  the two-cocycle on $\mathfrak{sp}\left(H' \right)$ defining $\mathfrak{mp}\left(H' \right)$ from \eqref{eq:cocyclemeta} vanishes on the Lie algebra $\mathfrak{sp}^+\left(H' \right)$ from \S\ref{sec:sp+}, hence one has a Lie algebra splitting 
\begin{equation}
\label{eq:sp+inmp}
\mathfrak{sp}^+\left(H' \right)\hookrightarrow \mathfrak{mp}\left(H' \right).
\end{equation}
Similarly, the two-cocycle on $\mathfrak{sp}\left(H' \right)\ltimes H'$ defining $\widetilde{\mathscr{U}}_{2}(H)$ from \eqref{eq:2cocycledefUtildeleq2} vanishes on $\mathfrak{sp}^+\left(H' \right)\ltimes H'_+$, hence one has a Lie algebra splitting 
\begin{equation}
\label{eq:sp+H+inUleq2}
\mathfrak{sp}^+\left(H' \right)\ltimes H'_+\hookrightarrow \widetilde{\mathscr{U}}_{2}(H).
\end{equation}

\subsection{The action of $\mathcal{S}p^+\left(H'\right)$}
Let $V$ be a representation of $\mathfrak{mp} \left( H' (\mathbb{C})\right)$ of central charge $c$.
From Theorem \ref{thm:AtiyahactingonVhatA}, the sheaf $\widehat{\mathbb{V}}(V)$ on $\widehat{\mathcal{A}}_g$ carries an action of  the Atiyah algebra $\frac{c}{2}\,\mathscr{F}_\Lambda$. 
This is induced by an action of $\mathfrak{mp} \left( H' \right)$ on $\widehat{\mathbb{V}}(V)$ via the projection $\mathfrak{mp} \left( H' \right) \rightarrow \frac{c}{2}\,\mathscr{F}_\Lambda$.
Composing with the inclusion \eqref{eq:sp+inmp}, we deduce an action of $\mathfrak{sp}^+ \left( H' \right)$ 
on the sheaf $\widehat{\mathbb{V}}(V)$ on $\widehat{\mathcal{A}}_g$. Note that by \eqref{eq:Kersp+inspF}, the stabilizer in $\mathfrak{sp}^+ \left( H' \right)$ of a point $(Z,F,L)\in \widehat{\mathcal{A}}_g$ acts trivially on the fiber of $\widehat{\mathbb{V}}(V)$ at $(Z,F,L)$.
Next, we show: 

\begin{proposition}
\label{prop:expactionSp+}
For an admissible $\mathfrak{mp} \left( H' (\mathbb{C})\right)$ representation $V$,
the action of $\mathfrak{sp}^+ \left( H' \right)$ on the sheaf $\widehat{\mathbb{V}}(V)$ on $\widehat{\mathcal{A}}_g$ can be exponentiated  to an equivariant action of $\mathcal{S}p^+\left(H' \right)$ on $\widehat{\mathbb{V}}(V)$.
\end{proposition}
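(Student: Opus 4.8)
The plan is to integrate the infinitesimal action in the spirit of the Harish--Chandra formalism of \S\ref{sec:geoHCp}, exploiting the fact recorded in \S\ref{sec:Sp+} that $\mathcal{S}p^+\left(H'\right)$ is connected with $\mathrm{Lie}\left(\mathcal{S}p^+\left(H'\right)\right)=\mathfrak{sp}^+\left(H'\right)$. Since the action on the sheaf is the $\mathscr{O}_S$-linear extension of the action on the fiber $V$ coming from \eqref{eq:mpinEndV} and the splitting \eqref{eq:sp+inmp}, it suffices to integrate the $\mathfrak{sp}^+\left(H'\right)$-action on $V$ and then check compatibility with the quotient defining $\widehat{\mathbb{V}}(V)$. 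First I would establish the Siegel-parabolic-type decomposition of $\mathfrak{sp}^+\left(H'\right)$: an element preserves $H'_+$, so in the Lagrangian block form relative to $H'=H'_+\oplus H_-$ it is block upper triangular, giving
\[
\mathfrak{sp}^+\left(H'\right)\cong \mathfrak{gl}\left(H'_+\right) \ltimes \widetilde{S}^2\left(H'_+\right),
\]
with $\widetilde{S}^2\left(H'_+\right)$ the abelian ideal of symmetric maps $H_-\to H'_+$ (the unipotent radical) and $\mathfrak{gl}\left(H'_+\right)$ the Levi factor; correspondingly $\mathcal{S}p^+\left(H'\right)$ is the identity component of $\mathrm{GL}\left(H'_+\right)\ltimes \mathcal{S}^2\left(H'_+\right)$, and I would integrate the two factors separately.

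For the unipotent radical the argument is routine: under \eqref{eq:mpinEndV} the ideal $\widetilde{S}^2\left(H'_+\right)$ is spanned by the normally ordered products $:b_i b_j:$ with $i,j\geq 1$, which act on the $\mathbb{Z}_{\geq 0}$-graded space $V$ strictly lowering the degree and hence locally nilpotently, so the exponential series converge termwise and, the radical being pro-nilpotent, assemble into an algebraic action of $\mathcal{S}^2\left(H'_+\right)$. The Levi factor is the crux. Identifying $V\cong\mathrm{Sym}\left(H_-\right)$ with $H_-\cong \left(H'_+\right)^{*}$ via the symplectic pairing, the group $\mathrm{GL}\left(H'_+\right)$ acts naturally on $\mathrm{Sym}\left(H_-\right)$, and this \emph{geometric} action integrates the $\mathfrak{gl}\left(H'_+\right)$-action on $V$ \emph{up to} the normal-ordering character $\tfrac12\,\mathrm{tr}$, i.e.\ the $\det^{1/2}$ twist of the oscillator representation; a priori this yields only an action of the metaplectic double cover. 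The plan is to show that Heisenberg--admissibility (Definition \ref{def:Heisenberg--admissible}) is exactly what forces the twist to descend to $\mathrm{GL}\left(H'_+\right)$: the Cartan of $\mathfrak{gl}\left(H'_+\right)$ is spanned by the operators $b_{-i}b_i$, $i\geq 1$, whose exponentials generate the maximal (pro-)torus $\prod_{i\geq 1}\mathbb{G}_m$; by admissibility each such operator is diagonalizable with integral eigenvalues, so each $\mathbb{G}_m$-factor acts by an integral power of its coordinate and the torus—and with it the whole Levi—integrates algebraically rather than only its double cover.

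The main obstacle is precisely this integration of the non-unipotent Levi factor: the Cartan operators are not nilpotent, so exponential series do not converge and the unipotent argument does not apply; one must instead combine the geometric $\mathrm{Sym}\left(H_-\right)$-action with the integrality supplied by admissibility, the latter being the hypothesis that rules out the genuinely projective (metaplectic) behavior. Granting the integration on $V$, I would conclude as follows. Extending $\mathscr{O}_S$-linearly gives an action of $\mathcal{S}p^+\left(H'\right)$ on $V\otimes_{\mathbb{C}}\mathscr{O}_S$; since the adjoint action of $g\in\mathcal{S}p^+\left(H'\right)$ carries $F$ to $gF$ and hence conjugates $\mathfrak{sp}_F\left(H'\right)$ to $\mathfrak{sp}_{gF}\left(H'\right)$, it sends the submodule $\mathfrak{sp}_F\left(H'\right)\cdot\left(V\otimes\mathscr{O}_S\right)$ to the one attached to $gF$, so the action descends to isomorphisms between the corresponding fibers of $\widehat{\mathbb{V}}(V)$. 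As this group action integrates the infinitesimal action already compatible with the projection $\widehat{\mathcal{A}}_g\to\mathcal{A}_g$, and $\mathcal{S}p^+\left(H'\right)$ acts transitively along its fibers (\S\ref{sec:Sp+}), the descended action is the desired equivariant action of $\mathcal{S}p^+\left(H'\right)$ on $\widehat{\mathbb{V}}(V)$.
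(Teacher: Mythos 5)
Your overall skeleton---split $\mathfrak{sp}^+\left(H'\right)$ into a ``unipotent'' part $\widetilde{S}^2\left(H'_+\right)$ handled by local nilpotence and a Levi/Cartan part handled by the integrality in Definition \ref{def:Heisenberg--admissible}---agrees with the paper's generator-by-generator analysis (the paper also disposes of the mixed generators $b_ib_j$ with $i>0>j$ via the commuting trick $(b_ib_j)^n=b_j^nb_i^n$). But your integration of the Levi factor contains a genuine gap: you identify $V\cong\mathrm{Sym}\left(H_-\right)$ and invoke the geometric action of $\mathrm{GL}\left(H'_+\right)$ on $\mathrm{Sym}\left(H_-\right)$. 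That identification holds only when $V$ is the rank one Heisenberg vertex algebra $\pi$ itself, whereas the proposition concerns an arbitrary Heisenberg--admissible $V$ (e.g.\ a lattice vertex algebra or a higher-rank Heisenberg algebra), which merely admits an embedding $\pi\rightarrow V$ and, as a $\pi$-module, is in general much larger than $\mathrm{Sym}\left(H_-\right)$; no geometric $\mathrm{GL}\left(H'_+\right)$-action on such a $V$ is available. Moreover, even granting the torus action supplied by admissibility, the inference ``the torus---and with it the whole Levi---integrates'' is unjustified: integrating a maximal (pro-)torus of an infinite-dimensional Levi does not by itself integrate the Levi, and your argument offers nothing for the off-diagonal Levi directions beyond this leap.

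The idea you are missing, and which is the heart of the paper's proof, is that one should \emph{not} try to exponentiate the action of a general element of $\mathfrak{sp}^+\left(H'\right)$ on $V\otimes_{\mathbb{C}}\mathscr{O}_S$ at all: a general element is an infinite topological sum of generators $b_ib_j$, containing infinitely many non-nilpotent terms with $i+j\leq 0$, so its exponential series need not be locally finite on $V$. The paper exponentiates only on the quotient $\widehat{\mathbb{V}}(V)$: by Theorem \ref{thm:unif1}, $\mathfrak{sp}_F\left(H'\right)$ is preserved by $\mathfrak{sp}^+\left(H'\right)$, and modulo $\mathfrak{sp}_F\left(H'\right)$ the algebra $\mathfrak{sp}^+\left(H'\right)$ factors through $\widetilde{S}^2\left(H'_+\right)\times\left(H'_+\otimes Z/F\right)$ as in \eqref{eq:sp+toS2HH}; consequently, for any given element only finitely many of its non-nilpotent terms $b_ib_j$ with $i+j\leq 0$ act locally non-trivially on $\widehat{\mathbb{V}}(V)$, and the exponential is locally a finite sum there (combining nilpotence, the commuting trick, and the $\mathbb{G}_m$-action on eigenspaces coming from admissibility). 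To repair your route of ``integrate on $V$ first, then descend,'' you would need the extra representation-theoretic input that every Heisenberg--admissible $V$ decomposes as a $\pi$-module into Fock modules (on each of which your geometric Levi action makes sense); this is not established by Definition \ref{def:Heisenberg--admissible} and is not needed if one works directly on the coinvariants as the paper does.
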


\begin{proof}
Recall from \S\ref{sec:Sp+} that since $\mathcal{S}p^+\left(H' \right)$ is connected, elements in $\mathcal{S}p^+\left(H' \right)$ are products of exponentials of elements in $\mathfrak{sp}^+ \left( H' \right)$. Hence it is enough to show that the action on~$\widehat{\mathbb{V}}(V)$ of exponentials of elements in $\mathfrak{sp}^+ \left( H' \right)$ is well-defined.

By definition, $\mathfrak{sp}^+ \left( H' \right)$ is topologically generated by elements of type $b_i \, b_j\in S^2(H')$ with $i,j$ not both negative.
First consider $b_i\,b_j$ with $i+j=0$. Since $V$ is an admissible $\mathfrak{mp} \left( H' (\mathbb{C})\right)$ representation, the  action of $b_i\,b_j$ on $V$  is diagonalizable with integral eigenvalues by Definition \ref{def:Heisenberg--admissible}. This action can be exponentiated to an action of the multiplicative group scheme $\mathbb{G}_m$ on $V$  by letting $a\in \mathbb{G}_m$ act as multiplication by $a^k$ on the eigenspace of $b_i\,b_j$ with eigenvalue $k$.

Next, consider $b_i\,b_j$ with $i,j$ not  both negative and $i+j\neq 0$. By Definition \ref{def:Heisenberg--admissible}, the action of $b_i\,b_j$ on $V$ is locally nilpotent and hence  can be exponentiated, since the formula for the action of the exponential of $b_i\,b_j$ is locally a finite sum.

Finally, we discuss the case of an arbitrary element of $\mathfrak{sp}^+ \left( H' \right)$.
For $(Z,F,L)$ in $\widehat{\mathcal{A}}_g$, Theorem \ref{thm:unif1} implies that $\mathfrak{sp}_F \left( H' \right)$ is preserved by the action of $\mathfrak{sp}^+ \left( H' \right)$.
After quotienting by $\mathfrak{sp}_F \left( H' \right)$, the Lie algebra $\mathfrak{sp}^+ \left( H' \right)$ factors to $\widetilde{S}^2(H'_+) \times \left( H'_+ \otimes Z/F \right)$
as in \eqref{eq:sp+toS2HH}, thus the action of $\mathfrak{sp}^+\left(H' \right)$ on $V\otimes_{\mathbb{C}} \mathscr{O}_S$ factors to an action of $\widetilde{S}^2(H'_+) \times \left( H'_+ \otimes Z/F \right)$ on $\widehat{\mathbb{V}}(V)$.
This implies that for an arbitrary element of $\mathfrak{sp}^+ \left( H' \right)$, only finitely many terms $b_i\,b_j$ with $i+j\leq 0$ act locally non-trivially on $\widehat{\mathbb{V}}(V)$.
Using the arguments in the previous paragraphs, one checks that the formula for the action of exponentials of elements in $\mathfrak{sp}^+ \left( H' \right)$ with only finitely many terms $b_i\,b_j$ with $i+j\leq 0$ is locally a finite sum on $\widehat{\mathbb{V}}(V)$. The statement follows.
\end{proof}

\subsection{Sheaves of coinvariants on $\mathcal{A}_g$}
\label{sec:descentAg}
Let  $V$ be an admissible $\mathfrak{mp} \left( H' (\mathbb{C})\right)$ representation.
From Proposition \ref{prop:expactionSp+}, the sheaf $\widehat{\mathbb{V}}(V)$ on $\widehat{\mathcal{A}}_g$ carries an equivariant action of $\mathcal{S}p^+\left(H' \right)$.
We argue that the quotient of $\widehat{\mathbb{V}}(V)$ by the action of $\mathcal{S}p^+\left(H'\right)$ descends along the projection $\widehat{\mathcal{A}}_g \rightarrow \mathcal{A}_g$ to a sheaf on $\mathcal{A}_g$. This is a variation on the descent formalism from \cite[\S17.2.14]{bzf}, see also the descent in \cite[\S5.3]{dgt}.

Specifically, let $({Z}, {F}, {L})\rightarrow S$ be a family of extended abelian varieties over a smooth base $S$.
From Theorem \ref{thm:unif1+}, the sheaf $\mathfrak{mp}\left(H' \left(\mathscr{O}_S\right)\right)$ is $\mathcal{S}p^+\left(H' \left(\mathscr{O}_S\right)\right)$-equivariant, and 
the sheaf of Lie subalgebras  $\mathfrak{sp}_F \left(H' (\mathscr{O}_S)\right)$ in $\mathfrak{mp}\left(H' \left(\mathscr{O}_S\right)\right)$ is preserved by the action of $\mathcal{S}p^+\left(H' \left(\mathscr{O}_S\right)\right)$. 

The sheaf of Lie algebras
$\mathfrak{sp}^+ \left( H' \left(\mathscr{O}_S\right) \right)$ acts on $\widehat{\mathbb{V}}(V)_{(Z,F,L)\rightarrow S}$ via the inclusion in $\mathfrak{mp}\left(H' \left(\mathscr{O}_S\right)\right)$ from \eqref{eq:sp+inmp}, and this action exponentiates to the equivariant action of $\mathcal{S}p^+\left(H' \left(\mathscr{O}_S\right)\right)$ on $\widehat{\mathbb{V}}(V)_{(Z,F,L)\rightarrow S}$ from Proposition \ref{prop:expactionSp+}.
It follows that the action of $\mathfrak{mp}\left(H' \left(\mathscr{O}_S\right)\right)$ on $\widehat{\mathbb{V}}(V)_{(Z,F,L)\rightarrow S}$ is $\mathcal{S}p^+\left(H' \left(\mathscr{O}_S\right)\right)$-equivariant, that is, $\mathfrak{mp}\left(H' \left(\mathscr{O}_S\right)\right)$ and $\mathcal{S}p^+\left(H' \left(\mathscr{O}_S\right)\right)$ act compatibly on $\widehat{\mathbb{V}}(V)_{(Z,F,L)\rightarrow S}$.
In particular, $\widehat{\mathbb{V}}(V)_{(Z,F,L)\rightarrow S}$ is an $\mathcal{S}p^+\left(H' \left(\mathscr{O}_S\right)\right)$-equivariant $\mathscr{O}_S$-module. 

From Theorem \ref{thm:unif1+}, $\mathcal{S}p^+\left(H' \right)$ acts transitively on the fibers of the map $\widehat{\mathcal{A}}_g \rightarrow \mathcal{A}_g$.
Since the stabilizer in $\mathfrak{sp}^+ \left( H' \left(\mathscr{O}_S\right)\right)$ of $S\rightarrow \widehat{\mathcal{A}}_g$ acts trivially on $\widehat{\mathbb{V}}(V)_{(Z,F,L)\rightarrow S}$ by \eqref{eq:Kersp+inspF}, so does the stabilizer in $\mathcal{S}p^+\left(H'\right)$ of $S\rightarrow \widehat{\mathcal{A}}_g$.

It follows that the quotient of $\widehat{\mathbb{V}}(V)$ by the action of $\mathcal{S}p^+\left(H'\right)$ descends along the projection $\widehat{\mathcal{A}}_g \rightarrow \mathcal{A}_g$ to 
a sheaf on $\mathcal{A}_g$, which we call the \textit{sheaf of coinvariants} $\mathbb{V}(V)$ on $\mathcal{A}_g$. One has a Cartesian diagram
\begin{equation}
\label{eq:Vdescend}
\begin{tikzcd}
\widehat{\mathbb{V}}(V) \arrow{r} \arrow{d} & \mathbb{V}(V) \arrow{d}\\
\widehat{\mathcal{A}}_g \arrow{r} & \mathcal{A}_g.
\end{tikzcd}
\end{equation}

Applying Theorem \ref{thm:AtiyahactingonVhatA}, we are now ready for:

\begin{proof}[Proof of Theorems \ref{thm:maininitVA} and \ref{thm:mainVA}]
The statements follow from a variation of the formalism of localization of modules over Harish-Chandra pairs, see \S\ref{sec:geoHCp}.
Namely, from Theorem \ref{thm:AtiyahactingonVhatA}, the sheaf $\widehat{\mathbb{V}}(V)$ on $\widehat{\mathcal{A}}_g$ carries an action of  the Atiyah algebra $\frac{c}{2}\,\mathscr{F}_\Lambda$. Since this action is compatible with the action of $\mathcal{S}p^+\left(H' \right)$, it induces an action of the Atiyah algebra $\frac{c}{2}\,\mathscr{F}_\Lambda$ on the sheaf $\mathbb{V}(V)$ on $\mathcal{A}_g$, hence the statements.
\end{proof}

\begin{example}
When $c=0$, the action of the Atiyah algebra factors to an action of the tangent sheaf to $\mathcal{A}_g$ on the sheaf of coinvariants $\mathbb{V}(V)$ on $\mathcal{A}_g$. Hence for $c=0$, the sheaf $\mathbb{V}(V)$ is more simply a $\mathcal{D}$-module on $\mathcal{A}_g$.
\end{example}

\subsection{The action of  $\mathcal{S}p^+\left(H' \right)\ltimes  \mathcal{O}_1^\times$}
Next, we discuss how to construct similarly sheaves of coinvariants on $\mathcal{X}_g$. For this, we start describing the action of $\mathcal{S}p^+\left(H' \right)\ltimes  \mathcal{O}_1^\times$ on the sheaf of coinvariants on $\widehat{\mathcal{X}}_g$.

Let $V$ be a representation  of $\widetilde{\mathscr{U}}_{2}(H(\mathbb{C}))$ of central charge $c$.
From Theorem \ref{thm:AtiyahactingonVhatX}, the sheaf $\widehat{\mathbb{V}}(V)$ on $\widehat{\mathcal{X}}_g$ carries an action of  the Atiyah algebra $-\frac{c}{2}\,\mathscr{F}_\Xi$.
This is induced by an action of $\widetilde{\mathscr{U}}_{2}(H)$ on $\widehat{\mathbb{V}}(V)$ via the projection $\widetilde{\mathscr{U}}_{2}(H) \rightarrow -\frac{c}{2}\,\mathscr{F}_\Xi$.
Composing with the inclusion \eqref{eq:sp+H+inUleq2}, we deduce an action of $\mathfrak{sp}^+ \left( H' \right)\ltimes H'_+$ on the sheaf $\widehat{\mathbb{V}}(V)$ on $\widehat{\mathcal{X}}_g$.

\begin{proposition}
\label{prop:expactionSp+twisted}
For an admissible $\widetilde{\mathscr{U}}_{2}(H(\mathbb{C}))$ representation $V$,
the action of $\mathfrak{sp}^+ \left( H' \right)\ltimes H'_+$ on the sheaf $\widehat{\mathbb{V}}(V)$ on $\widehat{\mathcal{X}}_g$ can be exponentiated  to an action of $\mathcal{S}p^+\left(H' \right)\ltimes  \mathcal{O}_1^\times$ on $\widehat{\mathbb{V}}(V)$.
\end{proposition}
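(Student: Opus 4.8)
The plan is to mirror the proof of Proposition~\ref{prop:expactionSp+}, exponentiating the two factors of $\mathcal{S}p^+\left(H'\right)\ltimes \mathcal{O}_1^\times$ separately and then assembling them. For the symplectic factor the argument is unchanged: $\mathfrak{sp}^+\left(H'\right)$ is topologically generated by the $b_i\,b_j$ with $i,j$ not both negative, each acting on $V$ in degree $-(i+j)$; these are locally nilpotent unless $i+j=0$, and the residual diagonal operators $b_i\,b_{-i}$ are exponentiated to a $\mathbb{G}_m$-action using that $V$ is Heisenberg--admissible (Definition~\ref{def:Heisenberg--admissible}). The only modification is that one invokes Theorem~\ref{thm:unif2} in place of Theorem~\ref{thm:unif1}, so that $\mathfrak{sp}_F\left(H'\right)\ltimes F$ (rather than $\mathfrak{sp}_F\left(H'\right)$) is preserved and the quotient argument of Proposition~\ref{prop:expactionSp+} applies on $\widehat{\mathbb{V}}(V)$.

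The genuinely new step is the exponentiation of $H'_+=\mathrm{Lie}(\mathcal{O}_1^\times)$. Here I would simply observe that $H'_+$ is topologically generated by $b_i=t^i$ for $i\geq 1$, and that each such $b_i$ acts on $V$ as an operator of strictly negative degree $-i$. Since $V$ is $\mathbb{Z}_{\geq 0}$-graded, every $b_i$ with $i\geq 1$ is locally nilpotent on $V$; indeed, on a homogeneous vector of degree $d$ only products $b_{i_1}\cdots b_{i_m}$ with $i_1+\cdots+i_m\leq d$ survive, so the exponential of any element of $H'_+$ is locally a finite sum on $V\otimes_{\mathbb{C}}\mathscr{O}_S$ and descends to $\widehat{\mathbb{V}}(V)$. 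This yields a well-defined action of $\mathcal{O}_1^\times$, which is connected with Lie algebra $H'_+$; note that, unlike the symplectic factor, this step uses only the grading of $V$ and not Heisenberg--admissibility.

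It then remains to check that the two exponentiated actions combine into an action of the semidirect product. Both come from restricting the single action~\eqref{eq:Uleq2inEndV} of $\widetilde{\mathscr{U}}_{2}(H)$ along the Lie algebra splitting~\eqref{eq:sp+H+inUleq2}, which is a homomorphism respecting the semidirect-product bracket. Since $\mathcal{S}p^+\left(H'\right)\ltimes \mathcal{O}_1^\times$ is connected and generated by exponentials of $\mathfrak{sp}^+\left(H'\right)\ltimes H'_+$, the two integrated actions will automatically satisfy the group relations---in particular the conjugation relation matching the adjoint action of $\mathcal{S}p^+\left(H'\right)$ on $\mathcal{O}_1^\times$ with the $\mathfrak{sp}^+\left(H'\right)$-action on $H'_+$---provided every exponential of an element of the full Lie algebra acts by a locally finite operator on $\widehat{\mathbb{V}}(V)$.

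The main obstacle is therefore precisely this local finiteness for arbitrary elements of $\mathfrak{sp}^+\left(H'\right)\ltimes H'_+$ after passing to the quotient by $\mathfrak{sp}_F\left(H'\right)\ltimes F$. The $H'_+$-generators contribute no difficulty, as they are uniformly locally nilpotent of negative degree; the only dangerous operators remain the degree-$\leq 0$ terms $b_i\,b_j$ of the symplectic factor. As in Proposition~\ref{prop:expactionSp+}, I would control these using Theorem~\ref{thm:unif2}: after quotienting by the stabilizer, $\mathfrak{sp}^+\left(H'\right)$ factors through the quotient $\widetilde{S}^2(H'_+)\times\left(H'_+\otimes Z/F\right)$ of~\eqref{eq:sp+toS2HH}, and since $Z/F$ is finite-dimensional only finitely many such terms act nontrivially on $\widehat{\mathbb{V}}(V)$. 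Hence the exponential of any element of $\mathfrak{sp}^+\left(H'\right)\ltimes H'_+$ is locally a finite sum, the Lie algebra action integrates to the connected group, and the proposition follows.
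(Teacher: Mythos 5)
Your proof is correct and takes essentially the same approach as the paper, whose own proof is a one-line remark: the statement follows as in Proposition~\ref{prop:expactionSp+}, since elements of $\mathcal{S}p^+\left(H'\right)\ltimes \mathcal{O}_1^\times$ are products of exponentials of elements of $\mathfrak{sp}^+\left(H'\right)\ltimes H'_+$, and $H'_+$ acts locally nilpotently on $V$. You have simply made explicit the details the paper leaves implicit (replacing Theorem~\ref{thm:unif1} by Theorem~\ref{thm:unif2}, the local nilpotency of the $b_i$ with $i\geq 1$ from the $\mathbb{Z}_{\geq 0}$-grading alone, and the semidirect-product compatibility via the splitting~\eqref{eq:sp+H+inUleq2}).
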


The statement follows similarly to Proposition \ref{prop:expactionSp+}, since
elements in $\mathcal{S}p^+\left(H' \right)\ltimes \mathcal{O}_1^\times$ are products of exponentials of elements in $\mathfrak{sp}^+ \left( H' \right)\ltimes H'_+$, 
and $H'_+$ acts locally nilpotently on $V$ by Definition \ref{def:Heisenberg--admissible}.

\subsection{Sheaves of coinvariants on $\mathcal{X}_g$}
Let $V$ be an admissible $\widetilde{\mathscr{U}}_{2}(H(\mathbb{C}))$ representation.
Proposition \ref{prop:expactionSp+twisted} gives an equivariant action of $\mathcal{S}p^+\left(H' \right)\ltimes  \mathcal{O}_1^\times$ on the sheaf $\widehat{\mathbb{V}}(V)$ on $\widehat{\mathcal{X}}_g$. 
Similarly to \S\ref{sec:descentAg}, we argue that the quotient of $\widehat{\mathbb{V}}(V)$ by the action of $\mathcal{S}p^+\left(H' \right)\ltimes  \mathcal{O}_1^\times$ descends along the projection $\widehat{\mathcal{X}}_g\rightarrow {\mathcal{X}}_g$.

Consider a family $({Z}, {F}, {L}, \overline{h}, q)\rightarrow S$ over a smooth base $S$ as in \S\ref{sec:extunivPPAV}.
From Theorem \ref{thm:unif2+}, the sheaf $\widetilde{\mathscr{U}}_{2}\left(H\left(\mathscr{O}_S\right)\right)$
is $(\mathcal{S}p^+\left(H' \right)\ltimes  \mathcal{O}_1^\times)$-equivariant, and the sheaf of Lie subalgebras $\mathfrak{sp}_{F} \left( H' \left( \mathscr{O}_S\right) \right) \ltimes F$ is preserved by the action of $\mathcal{S}p^+\left(H' \right)\ltimes  \mathcal{O}_1^\times$.

The sheaf of Lie algebras $\mathfrak{sp}^+\left(H' \left( \mathscr{O}_S\right)\right)\ltimes H'_+\left( \mathscr{O}_S\right)$ acts on $\widehat{\mathbb{V}}(V)_{({Z}, {F}, {L}, \overline{h}, q)\rightarrow S}$ via the inclusion \eqref{eq:sp+H+inUleq2}, and this action exponentiates to the action of $\mathcal{S}p^+\left(H' \right)\ltimes  \mathcal{O}_1^\times$ on $\widehat{\mathbb{V}}(V)_{({Z}, {F}, {L}, \overline{h}, q)\rightarrow S}$ from Proposition~\ref{prop:expactionSp+twisted}.
It follows that the action of $\widetilde{\mathscr{U}}_{2}\left(H\left(\mathscr{O}_S\right)\right)$ on $\widehat{\mathbb{V}}(V)_{({Z}, {F}, {L}, \overline{h}, q)\rightarrow S}$ is 
$(\mathcal{S}p^+\left(H' \right)\ltimes  \mathcal{O}_1^\times)$-equivariant, that is, $\widetilde{\mathscr{U}}_{2}\left(H\left(\mathscr{O}_S\right)\right)$ and $\mathcal{S}p^+\left(H' \right)\ltimes  \mathcal{O}_1^\times$ act compatibly on $\widehat{\mathbb{V}}(V)_{({Z}, {F}, {L}, \overline{h}, q)\rightarrow S}$. In particular, $\widehat{\mathbb{V}}(V)_{({Z}, {F}, {L}, \overline{h}, q)\rightarrow S}$ is an $(\mathcal{S}p^+\left(H' \right)\ltimes  \mathcal{O}_1^\times)$-equivariant $\mathscr{O}_S$-module. 

Since $\mathcal{S}p^+\left(H' \right)\ltimes  \mathcal{O}_1^\times$ acts transitively on the fibers of the map $\widehat{\mathcal{X}}_g \rightarrow \mathcal{X}_g$ (Theorem \ref{thm:unif2+}),
it follows that $\widehat{\mathbb{V}}(V)$ descends along the projection $\widehat{\mathcal{X}}_g \rightarrow \mathcal{X}_g$ to 
a sheaf on $\mathcal{X}_g$, which we call the \textit{sheaf of coinvariants} $\mathbb{V}(V)$ on $\mathcal{X}_g$. One has a Cartesian diagram as in \eqref{eq:Vdescend}, with 
$\widehat{\mathcal{A}}_g \rightarrow \mathcal{A}_g$ replaced by $\widehat{\mathcal{X}}_g \rightarrow \mathcal{X}_g$.

\begin{proof}[Proof of Theorems \ref{thm:maininitVX} and \ref{thm:mainVX}]
From Theorem \ref{thm:AtiyahactingonVhatX}, the sheaf $\widehat{\mathbb{V}}(V)$ on $\widehat{\mathcal{X}}_g$ carries an action of  the Atiyah algebra $-\frac{c}{2}\,\mathscr{F}_\Xi$. Since this action is compatible with the action of $\mathcal{S}p^+\left(H' \right)\ltimes  \mathcal{O}_1^\times$, it induces an action of the Atiyah algebra $-\frac{c}{2}\,\mathscr{F}_\Xi$ on the sheaf $\mathbb{V}(V)$ on $\mathcal{X}_g$, hence the statements.
\end{proof}


\section{Final remarks}
\label{sec:final}

\subsection{Geography of Harish-Chandra pairs}
\label{sec:geoHCp}
The constructions of the sheaves of coinvariants in \S\ref{sec:sheafV} are variations on the formalism of localization of modules over Harish-Chandra pairs first introduced in \cite{bb} and further developed in \cite{bfm, besh} and \cite[\S 17.2]{bzf}. A Harish-Chandra pair $(\mathfrak{g}, K)$ consists of a Lie algebra $\mathfrak{g}$ and a Lie group $K$ verifying some compatibility conditions. 
The localization functor assigns to a module over $(\mathfrak{g}, K)$, i.e., a vector space with compatible actions of $\mathfrak{g}$ and $K$, a (possibly twisted) $\mathcal{D}$-module on a variety identified by the Harish-Chandra pair. 
For instance,  localizations of: 

\begin{enumerate}[(a)]

\item modules over $\left(\mathrm{Der}\left(\mathcal{O}\right),  \mathrm{Aut}\left(\mathcal{O}\right)\right)$
 yield $\mathcal{D}$-modules on a smooth curve  (see notation from \S\ref{sec:DerAut}); 
 
\item modules over $\left(\mathrm{Witt}, \mathrm{Aut}\left(\mathcal{O}\right)\right)$
 yield $\mathcal{D}$-modules on the moduli space $\mathcal{M}_{g,1}$ of pointed smooth curves of genus $g$; and 
 
\item modules over $\left(\mathrm{Vir}, \mathrm{Aut}\left(\mathcal{O}\right)\right)$
yield twisted $\mathcal{D}$-modules on $\mathcal{M}_{g,1}$. 
\end{enumerate}
We refer to \cite{bzf} for a discussion of these and more geometries.

The construction of the twisted $\mathcal{D}$-modules of coinvariants on $\mathcal{A}_g$ and $\mathcal{X}_g$ in \S\ref{sec:sheafV} can be interpreted as the result of the localization of modules over the Harish-Chandra pairs
\[
\left(\mathfrak{mp} \left( H' \right), \, \mathcal{S}p^+\left(H' \right)\right)
\qquad
\mbox{and}
\qquad
\left( \widetilde{\mathscr{U}}_{2}(H), \, \mathcal{S}p^+\left(H' \right)\ltimes  \mathcal{O}_1^\times \right),
\]
respectively. We emphasize a difference here with respect to the geometry of curves:
While the group $\mathrm{Aut}\left(\mathcal{O}\right)$ acts simply transitively on the fibers of $\widehat{\mathcal{M}}_g \rightarrow\mathcal{M}_{g,1}$, the group $\mathcal{S}p^+\left(H' \right)$  acts  transitively, but not simply, on the fibers of $\widehat{\mathcal{A}}_g \rightarrow\mathcal{A}_g$, as discussed in \S\ref{sec:groupind}.

\subsection{Comparison with classical coinvariants on curves}

For a vertex operator algebra $V$, the space of coinvariants at $(C,P,t)\in \widehat{\mathcal{M}}_g$  constructed in \cite{bzf} is the quotient of $V$ by the action of a Lie algebra 
$\mathcal{L}_{C\setminus P}(V)$ depending on both $V$ and the open subset $C\setminus P\subset C$. 
The kernel of the map $\mathrm{Witt}\rightarrow T_{(C,P,t)}(\widehat{\mathcal{M}}_g)$ is a Lie subalgebra of $\mathcal{L}_{C\setminus P}(V)$, but in general is not equal to it.
It is shown in \cite{bzf} how the construction globalizes over $\widehat{\mathcal{M}}_g$ and yields an $\mathrm{Aut}\left(\mathcal{O}\right)$-equivariant twisted $\mathcal{D}$-module on $\widehat{\mathcal{M}}_g$. This thus descends along the principal $\mathrm{Aut}\left(\mathcal{O}\right)$-bundle $\widehat{\mathcal{M}}_g \rightarrow\mathcal{M}_{g,1}$. Furthermore, since the resulting fibers at $(C,P)$ and $(C,Q)$ in $\mathcal{M}_{g,1}$ are canonically isomorphic for all $P,Q\in C$, the sheaf is constant along the fibers of $\mathcal{M}_{g,1}\rightarrow \mathcal{M}_g$ and thus descends to a twisted $\mathcal{D}$-module on $\mathcal{M}_g$.

For a representation $V$ of $\mathfrak{mp} \left( H' (\mathbb{C})\right)$, the space of coinvariants at $(Z,F,L)\in \widehat{\mathcal{A}}_g$ from \S\ref{sec:Vhat} is the quotient of $V$ by the action of the Lie algebra $\mathfrak{sp}_{F} \left(H'\right)$. Contrary to $\mathcal{L}_{C\setminus P}(V)$, 
the Lie algebra $\mathfrak{sp}_{F} \left(H'\right)$ is independent of $V$ and equals the kernel of the analogous map $\mathfrak{sp} \left(H'\right)\rightarrow T_{(Z,F,L)}(\widehat{\mathcal{A}}_g)$, hence it is the smallest Lie algebra whose coinvariants yield twisted $\mathcal{D}$-modules on $\widehat{\mathcal{A}}_g$.
This implies that the push-forward via the Torelli injection \mbox{$\mathcal{M}_g \rightarrow \mathcal{A}_g$}
of the sheaf of coinvariants for $\mathcal{L}_{C\setminus P}(V)$ on $\mathcal{M}_g$ does not map to the sheaf of coinvariants for 
$\mathfrak{sp}_{F} \left(H'\right)$ on $\mathcal{A}_g$ for a general~$V$.
For this, it is natural to determine a Lie algebra containing as Lie subalgebras both $\mathfrak{sp}_{F} \left(H'\right)$ and  $\mathcal{L}_{C\setminus P}(V)$ at points in the Jacobian locus. We will present such an extension in a follow-up work.

The spaces of coinvariants for $\mathcal{L}_{C\setminus P}(V)$ are known to have finite dimension when $V$ satisfies some finiteness and semisimplicity conditions \cite{an1, dgt2}, and thus give rise to vector bundles of coinvariants on $\mathcal{M}_g$ whose Chern classes are determined by their twisted $\mathcal{D}$-module structure \cite{dgt}. 
It would be interesting to determine a similar result on spaces of coinvariants on $\mathcal{A}_g$. The mentioned extension of $\mathfrak{sp}_{F} \left(H'\right)$ will provide a better context for investigating this property, which we intend to explore accordingly.

\subsection{Failing descent for arbitrary modules}

Spaces of coinvariants for $\mathcal{L}_{C\setminus P}(V)$ have been constructed also for the action of $\mathcal{L}_{C\setminus P}(V)$ on an arbitrary vertex operator algebra module \cite{bzf}. These give rise  to twisted $\mathcal{D}$-modules on $\mathcal{M}_{g,1}$ \cite[\S 8.7]{dgt2}, which in general do not descend on $\mathcal{M}_g$.
In fact, the descent along $\mathcal{M}_{g,1}\rightarrow \mathcal{M}_g$ is only possible after tensoring the sheaf of coinvariants by an appropriate power of the relative cotangent line bundle
to offset the variation of the spaces of coinvariants along the fibers of $\mathcal{M}_{g,1}\rightarrow \mathcal{M}_g$ (as in \cite[\S 8.7]{dgt2}).
Similarly, the twisted $\mathcal{D}$-modules of coinvariants from arbitrary vertex operator algebra modules constructed as in \S\ref{sec:Vhat} in general  do not descend on $\mathcal{A}_g$.
As the relative $H^2$-space for the map $\widehat{\mathcal{A}}_g\rightarrow \mathcal{A}_g$ is trivial (Proposition \ref{prop:H2AXMP}), tensoring by a line bundle will not help here.
It would be interesting to find a finite-dimensional extension of the moduli space $\mathcal{A}_g$ where the sheaves of coinvariants from arbitrary vertex operator algebra modules could descend from $\widehat{\mathcal{A}}_g$.

\section*{Acknowledgments} 
The author acknowledges partial support from a Simons Foundation's Travel Support for Mathematicians gift during the preparation of this manuscript.
The comments of an anonymous referee helped to improve the manuscript. 

\bibliographystyle{alphanumN}
\bibliography{Biblio}

\end{document}